\theoremstyle{plain}
\newtheorem{theorem}{Theorem}[section]
\newtheorem{corollary}[theorem]{Corollary}
\newtheorem{proposition}[theorem]{Proposition}
\newtheorem{lemma}[theorem]{Lemma}
\theoremstyle{definition}
\newtheorem{definition}[theorem]{Definition}
\newtheorem{remark}[theorem]{Remark}
\theoremstyle{remark}
\numberwithin{equation}{section}
\let\c@theorem\c@table
\def\img {\mathop{\rm Im}}
\def\diag {\mathop{\rm diag}}
\def\rk {\mathop{\rm rank}}
\def\sign {\mathop{\rm sign}}
\def\cp{\mathop{\rm CP}}
\def\R{\mathbb{R}}
\def\N{\mathbb{N}}
\newcommand{\bp}{\ensuremath{\mathbb P}}
\newcommand{\pos}{\ensuremath{\mathcal H}_{pos}}
\newcommand{\mono}{\ensuremath{\mathcal H}_{mono}}
\newcommand{\conv}{\ensuremath{\mathcal H}_{conv}}
\newcommand{\super}{\ensuremath{\mathcal H}_{super}}
\newcommand{\sub}{\ensuremath{\mathcal H}_{sub}}
\def\calh{\mathcal H}
\begin{document}
\title[Characterizing Hadamard powers preserving positivity, monotonicity,
and convexity]
{Complete characterization of Hadamard powers preserving Loewner
positivity, monotonicity, and convexity}
\author[Dominique Guillot \and Apoorva Khare \and Bala
Rajaratnam]{Dominique Guillot \and Apoorva Khare \and Bala
Rajaratnam\\Stanford University}
\address{Department of Mathematics, Stanford University, Stanford, CA,
USA}
\date{\today}
\keywords{Loewner ordering, Hadamard powers, entrywise powers, rank
constraint, critical exponent, positivity, monotonicity, convexity,
super-additivity, sub-additivity}

\subjclass[2010]{15B48 (primary); 15A45, 26A48, 26A51 (secondary)}

\begin{abstract}
Entrywise powers of symmetric matrices preserving positivity,
monotonicity or convexity with respect to the Loewner ordering arise in
various applications, and have received much attention recently in the
literature. Following FitzGerald and Horn [{\it J.~Math.~Anal.~Appl.},
1977], it is well-known that there exists a \emph{critical exponent}
beyond which all entrywise powers preserve positive definiteness. Similar
phase transition phenomena have also recently been shown by Hiai (2009)
to occur for monotonicity and convexity. In this paper, we complete the
characterization of all the entrywise powers below and above the critical
exponents that are positive, monotone, or convex on the cone of positive
semidefinite matrices. We then extend the original problem by fully
classifying the positive, monotone, or convex powers in a more general
setting where additional rank constraints are imposed on the matrices. We
also classify the entrywise powers that are super/sub-additive with
respect to the Loewner ordering. Finally, we extend all the previous
characterizations to matrices with negative entries. Our analysis
consequently allows us to answer a question raised by Bhatia and Elsner
(2007) regarding the smallest dimension for which even extensions of the
power functions do not preserve Loewner positivity.
\end{abstract}
\maketitle

\section{Introduction and main results}

The study of positive definite matrices and of functions that preserve
them arises naturally in many branches of mathematics and other
disciplines. Given a function $f: \R \to \R$ and a matrix $A = (a_{ij})$,
the matrix $f[A] := (f(a_{ij}))$ is obtained by applying $f$ to the
entries of $A$. Such mappings are called \emph{Hadamard functions} (see
\cite[\S 6.3]{Horn_and_Johnson_Topics}) and appear naturally in many
fields of pure and applied mathematics, probability, and statistics.

Characterizing functions $f: \R \to \R$ such that $f[A]$ is positive
semidefinite for every positive semidefinite matrix $A$ is critical for
many applications. For example, in modern high-dimensional probability
and statistics, functions are often applied to the entries of
covariance/correlation matrices in order to obtain regularized estimators
with attractive properties (like sparsity, good condition number, etc.).
Particular examples of functions used in practice include the so-called
\emph{hard} and $\emph{soft-thresholding}$ functions (see
\cite{bickel_levina, Guillot_Rajaratnam2012, Guillot_Rajaratnam2012b,
hero_rajaratnam, Hero_Rajaratnam2012}), and the power functions - see
e.g.~\cite{Li_Horvath} and \cite[\S 2.2]{Zhang_Horvath}. The resulting
matrices often serve as ingredients in other statistical procedures that
require these matrices to be positive semidefinite. In order for such
procedures to be widely applicable, it is therefore important to know
whether a given Hadamard function preserves positivity.

Let $A$ be a positive semidefinite matrix with nonnegative entries. In
this paper, we study the properties of entrywise powers of $A$, i.e., the
properties of $f[A]$ when $f(x) = x^\alpha$ is applied elementwise to $A$
(for some $\alpha \geq 0$). This question of which entrywise (or
Hadamard) powers $x^\alpha$ preserve Loewner positivity has been widely
studied in the literature. One of the earliest works in this setting is
by FitzGerald and Horn \cite{FitzHorn}, who studied the set of entrywise
powers preserving Loewner positivity among $n \times n$ matrices, in
connection with the Bieberbach conjecture. They show that a certain phase
transition occurs at $\alpha = n-2$. More precisely, every $\alpha \geq
n-2$ as well as every positive integer preserve Loewner positivity, while
no non-integers in $(0,n-2)$ do so.

The phase transition at the integer $n-2$ has been popularly referred to
in the literature as the ``critical exponent" (CE) for preserving Loewner
positivity. Indeed, the study of critical exponents - and more generally
of functions preserving a form of positivity - is an interesting and
important endeavor in a wide variety of situations, and has been studied
in many settings (see e.g.~\cite{Loewner34, johnson_et_al_2011,
Guillot_Khare_Rajaratnam-CEC, micchelli_et_al_1979, Walch_survey}).

While it is more common in the critical exponents literature to study
matrices with nonnegative entries, positive semidefinite matrices
containing negative entries also occur frequently in practice. In recent
work, Hiai \cite{Hiai2009} extended previous work by FitzGerald and Horn
by considering the odd and even extensions of the power functions to
$\R$. Recall that for $\alpha \in \R$, the even and odd multiplicative
extensions to $\R$ of the power function $f_\alpha(x) := x^\alpha$ are
defined to be $\phi_\alpha(x) := |x|^\alpha$ and $\psi_\alpha(x) :=
\sign(x) |x|^\alpha$ at $x \neq 0$, and $f_\alpha(0) = \phi_\alpha(0) =
\psi_\alpha(0) := 0$.
In {\it loc.~cit.}, Hiai \cite{Hiai2009} studied the powers $\alpha > 0$
for which $\phi_\alpha$ and $\psi_\alpha$ preserve Loewner positivity,
and showed that the same phase transition also occurs at $n-2$ for
$\phi_\alpha, \psi_\alpha$, as demonstrated in \cite{FitzHorn}. He also
analyzed functions that are \emph{monotone} and \emph{convex} with
respect to the Loewner ordering, and proved several deep results and
connections between these classes of functions. These results are akin to
the corresponding connections between positivity, monotonicity, and
convexity for real functions of one variable. Before recalling these
notions, we first introduce some notation. Given $n \in \N$ and $I
\subset \R$, let $\bp_n(I)$ denote the set of symmetric positive
semidefinite $n \times n$ matrices with entries in $I$; denote
$\bp_n(\R)$ by $\bp_n$. We write $A \geq B$ when $A-B \in \bp_n$. For a
function $f: I \to \R$ and a matrix $A \in \bp_n(I)$, we denote by $f[A]$
the matrix $f[A] := (f(a_{ij}))$.
For a matrix $A$ with nonnegative entries, the entrywise power $A^{\circ
\alpha} := ((a_{ij}^\alpha))$ then equals $f_\alpha[A]$. Given a subset
$V \subset \bp_n(I)$, recall \cite{Hiai2009} that a function $f: I \to
\R$ is
\begin{itemize}
\item \emph{positive on $V$} with respect to the Loewner ordering if
$f[A] \geq 0$ for all $0 \leq A \in V$;

\item \emph{monotone on $V$} with respect to the Loewner ordering if
$f[A] \geq f[B]$ for all $A,B \in V$ such that $A \geq B \geq 0$; 

\item \emph{convex on $V$} with respect to the Loewner ordering if
$f[\lambda A + (1-\lambda) B] \leq \lambda f[A] + (1-\lambda) f[B]$ for
all $0 \leq \lambda \leq 1$ and $A, B \in V$ such that $A \geq B \geq 0$. 

\item \emph{super-additive on $V$} with respect to the Loewner
ordering if $f[A+B] \geq f[A] + f[B]$ for all $A,B \in V$ for which
$f[A+B]$ is defined.
\item \emph{sub-additive on $V$} with respect to the Loewner
ordering if $f[A+B] \leq f[A] + f[B]$ for all $A,B \in V$ for which
$f[A+B]$ is defined.
\end{itemize}

\noindent For convenience, functions satisfying these properties are
henceforth termed Loewner positive, Loewner monotone, Loewner convex, and
Loewner super/sub-additive respectively.

Note that many of the critical exponents for Hadamard powers preserving
positivity, monotonicity, and convexity have already been determined in
the literature \cite{FitzHorn, Bhatia-Elsner, Hiai2009}. Yet the sets of
all powers preserving these properties have not been fully characterized.
Specifically, the powers below the critical exponents have not been fully
analyzed. However, when one uses power functions to regularize positive
semidefinite matrices such as correlation matrices, the lower powers are
crucially important, as they produce a lesser degree of perturbation from
the original matrix. So it is also important to classify the powers below
the critical exponent, which preserve positivity. There is thus a
fundamental gap, which is addressed in this paper. Specifically, we
completely characterize all powers $\alpha > 0$ for which the functions
$f_\alpha(x)$, $\phi_\alpha(x)$, and $\psi_\alpha(x)$ are positive,
monotone, convex, or super/sub-additive with respect to the Loewner
ordering, thus completing the analysis.

An important refinement of the above problem is when an additional rank
constraint is imposed. Specifically, we are interested in classifying the
entrywise powers that are Loewner positive, monotone, or convex, when
restricted to matrices in $\bp_n$ of rank at most $k$, for fixed $1 \leq
k \leq n$. Our motivation for imposing such constraints is twofold.
First, for each non-integer power $\alpha < n-2$ below the critical
exponent, one can find low rank $n \times n$ matrices whose positivity is
not preserved by applying $f_\alpha$ entrywise. Preliminary results in
this regard can be found in FitzGerald-Horn \cite{FitzHorn} and
Bhatia-Elsner \cite{Bhatia-Elsner}; however, the role that rank plays in
preserving positivity is not fully understood. It is thus natural to ask
which entrywise powers are Loewner positive, monotone, or convex, when
restricted to positive semidefinite matrices with low rank, or rank
bounded above.
Second, many applications in modern-day high-dimensional statistics
require working with correlation matrices arising from small samples.
Such matrices are very often rank-deficient in practice, and thus it is
useful to characterize maps that preserve positivity when applied to
matrices of a fixed rank.

Before stating our main result, we introduce some notation.

\begin{definition}
Fix integers $n \geq 2$ and $1 \leq k \leq n$, and subsets $I \subset
\R$. Let $\bp_n^k(I)$ denote the subset of matrices in $\bp_n(I)$ that
have rank at most $k$. Define: 
\begin{align}\label{Epos}
\pos(n,k) &:= \{ \alpha \in \R : \mbox{ the function }
x^\alpha \mbox{ is positive on } \bp_n^k([0,\infty)) \}, \notag\\
\pos^\phi(n,k) &:= \{ \alpha \in \R : \mbox{ the function }
\phi_\alpha \mbox{ is positive on } \bp_n^k(\R) \}, \\
\pos^\psi(n,k) &:= \{ \alpha \in \R : \mbox{ the function }
\psi_\alpha \mbox{ is positive on } \bp_n^k(\R) \} \notag.
\end{align}

\noindent Similarly, let $\calh_J(n,k), \calh_J^\phi(n,k),
\calh_J^\psi(n,k)$ denote the entrywise powers preserving Loewner
properties on $\bp_n^k([0,\infty))$ or $\bp_n^k(\R)$, with $J \in
\{$positivity, monotonicity, convexity, super-additivity,
sub-additivity$\}$.
\end{definition}

We now state the main result of this paper in the form of Table
\ref{H-list}.

\begin{theorem}[Main result]\label{thm:main}
Fix an integer $n \geq 2$. The sets of entrywise real powers that are
Loewner positive, monotone, convex, and super/sub-additive, are as listed
in Table \ref{H-list}.
\end{theorem}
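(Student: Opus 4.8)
The plan is to prove Theorem~\ref{thm:main} by filling in Table~\ref{H-list} entry by entry, separating the analysis into two regimes—powers \emph{above} and \emph{below} the critical exponent $n-2$—and treating the three families $f_\alpha, \phi_\alpha, \psi_\alpha$ in parallel. The results of FitzGerald--Horn \cite{FitzHorn} and Hiai \cite{Hiai2009} already supply the critical exponents and the behavior for $\alpha \geq n-2$ (together with the integer powers), so the genuinely new content lies in characterizing precisely which powers $\alpha < n-2$ preserve each Loewner property, and in tracking the effect of the rank constraint $k$.

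First I would dispose of the ``above the critical exponent'' cases: for positivity this is \cite{FitzHorn, Hiai2009}, and the key tool is that $x^\alpha$ for $\alpha \geq n-2$ (or $\alpha$ a nonnegative integer) can be written via a Schur-product/integral-representation argument that manifestly preserves positive semidefiniteness. For monotonicity and convexity I would exploit the standard one-variable heuristic—made rigorous in the Loewner setting by Hiai—that differentiating a Loewner-monotone family in the parameter, or taking second differences, reduces these properties to positivity of an associated Hadamard function, so that the monotone and convex thresholds are pinned down by the positivity threshold shifted appropriately. The plan is to state these reductions as the chain ``convex $\Rightarrow$ monotone $\Rightarrow$ positive (up to the correct shift in $\alpha$),'' then read off the admissible sets.

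The main obstacle, and the crux of the paper, is the sharp \emph{negative} direction below the critical exponent: showing that no non-integer $\alpha \in (0, n-2)$ (suitably adjusted for the rank constraint) preserves the property in question. Here the strategy is to construct explicit low-rank counterexamples. The natural candidate is a rank-one-plus-perturbation matrix of the form $A = \mathbf{1}\mathbf{1}^T + \epsilon C$ (or a Hankel/moment-type matrix built from a vector $(1, 1+\epsilon, \dots)$), apply $f_\alpha$ entrywise, and expand $f_\alpha[A]$ to first order in $\epsilon$. The leading correction term carries a coefficient polynomial in $\alpha$ whose real roots are exactly the nonnegative integers $0, 1, \dots, n-2$; choosing a test vector in the kernel of the unperturbed matrix then exhibits a negative eigenvalue direction precisely when $\alpha$ is a non-integer below the threshold. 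Tracking the rank of these test matrices carefully is what yields the dependence on $k$: one must verify that the counterexamples can be realized within $\bp_n^k$, which sharpens the threshold from $n-2$ to something like $k-2$ in the rank-constrained setting.

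For the even and odd extensions $\phi_\alpha, \psi_\alpha$, and for the super/sub-additivity rows, I would reduce to the positivity analysis wherever possible: $\phi_\alpha$ on $\bp_n(\R)$ restricts to $f_\alpha$ on matrices with nonnegative entries, so one direction is immediate, while the presence of negative entries requires testing against sign-patterned matrices (e.g.\ replacing $\mathbf{1}$ by a vector with $\pm 1$ entries) to detect the failure of positivity for $\psi_\alpha$ at even-versus-odd thresholds. Super/sub-additivity I would handle by a direct computation comparing $f[A+B]$ with $f[A]+f[B]$ on rank-one summands, where convexity/concavity of $x^\alpha$ in the scalar variable governs the sign of the difference. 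Assembling all these cases into the unified statement of Table~\ref{H-list} completes the proof; I expect the bookkeeping of simultaneously covering $f_\alpha, \phi_\alpha, \psi_\alpha$, all five Loewner properties, and the rank parameter $k$ to be the most delicate organizational challenge, even though each individual entry follows from one of the two core techniques above.
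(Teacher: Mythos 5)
Your overall architecture (cite FitzGerald--Horn and Hiai above the critical exponent, build explicit low-rank counterexamples below it, and reduce monotonicity/convexity to positivity of derivatives) matches the paper's. However, there are two genuine gaps.

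First, your claim that tracking the rank of the test matrices ``sharpens the threshold from $n-2$ to something like $k-2$ in the rank-constrained setting'' is wrong, and it inverts the main finding of the paper. The critical exponents for $\bp_n^k$ with $2 \leq k \leq n$ are $n-2$, $n-1$, $n$ --- independent of $k$ --- precisely because the decisive counterexamples (the FitzGerald--Horn matrices $((1+\epsilon ij))$ and the Bhatia--Elsner cosine matrices $A_r = ((\cos(i-j)\pi/r))$, padded with zero blocks) already have rank $2$. So $\calh_J(n,k) = \calh_J(n,2) = \calh_J(n,n)$ for all $k \geq 2$; only $k=1$ behaves differently, via multiplicativity of the power functions on rank-one matrices. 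Had the threshold dropped to $k-2$, every positive power would preserve positivity on $\bp_n^2$, which the rank-$2$ matrix $((1+\epsilon ij))$ refutes. Table \ref{H-list} as stated cannot be recovered from your version of the rank bookkeeping.

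Second, your perturbative mechanism --- expand $f_\alpha[\mathbf{1}\mathbf{1}^T + \epsilon C]$ and observe that the offending coefficient is a polynomial in $\alpha$ vanishing exactly at $0,1,\dots,n-2$ --- only rules out \emph{non-integer} powers. It cannot exclude the odd integers in $(0,n-2)$ from $\pos^\phi(n,k)$ or the even integers from $\pos^\psi(n,k)$ (nor the corresponding parity-mismatched integers for monotonicity, convexity, and super-additivity), because for integer $\alpha$ the binomial expansion terminates in Schur powers and the perturbation argument produces nothing negative. The paper handles these cases with an entirely different tool: the rank-$2$ matrices $A_{\alpha+3}$ built from cosines, an explicit alternating eigenvector, and a root-counting argument for generalized Dirichlet polynomials (Lemma \ref{Ldescartes}) showing the relevant eigenvalue is negative on $(\alpha-1,\alpha+1)$. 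Without some substitute for this, the $\calh_J^\phi$ and $\calh_J^\psi$ columns of the table are not established. Relatedly, your treatment of sub-additivity by ``convexity/concavity of $x^\alpha$ on rank-one summands'' misses the delicate exceptional cases $\alpha = 0$ and $\alpha < 0$ on $\bp_2^1$, which in the paper require an inclusion--exclusion argument and a separate log-convexity (Hessian) computation, and which account for the irregular shape of the sub-additivity rows.
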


\begin{table}
\begin{tabular}{|c|c|c|c|}
\hline
$J$ & $\calh_J(n,k)$ & $\calh_J^\phi(n,k)$ & $\calh_J^\psi(n,k)$\\ \hline
\hline
{\bf Positivity} &\multicolumn{3}{c|}{}\\
\hline
$k=1$ & $\R$ & $\R$ & $\R$\\
& G--K--R & G--K--R & G--K--R \\ \hline
& $\N \cup [n-2,\infty)$ & $2\N \cup [n-2,\infty)$ &
$(-1+2\N) \cup [n-2,\infty)$\\
$2 \leq k \leq n$ & FitzGerald--Horn & FitzGerald--Horn, &
FitzGerald--Horn,\\
& &  Hiai, Bhatia--Elsner, & Hiai, G--K--R\\ & & G--K--R &\\ \hline
{\bf Monotonicity} &\multicolumn{3}{c|}{}\\
\hline
$k=1$ & $[0,\infty)$ & $[0,\infty)$ & $[0,\infty)$\\
& G--K--R & G--K--R & G--K--R \\ \hline
$2 \leq k \leq n$ & $\N \cup [n-1,\infty)$ & $2\N \cup [n-1,\infty)$ &
$(-1+2\N) \cup [n-1,\infty)$\\
& FitzGerald--Horn & FitzGerald--Horn, &
FitzGerald--Horn,\\
& & Hiai, G--K--R & Hiai, G--K--R\\ \hline
{\bf Convexity} &\multicolumn{3}{c|}{}\\
\hline
$k=1$ & $[1,\infty)$ & $[1,\infty)$ & $[1,\infty)$\\
& G--K--R & G--K--R & G--K--R \\ \hline
$2 \leq k \leq n$ & $\N \cup [n,\infty)$ & $2\N \cup [n,\infty)$ &
$(-1+2\N) \cup [n,\infty)$\\
& Hiai, G--K--R & Hiai, G--K--R & Hiai, G--K--R\\ \hline
{\bf {Super-additivity}} &\multicolumn{3}{c|}{}\\
\hline
$1 \leq k \leq n$ & $\N \cup [n,\infty)$ & $2\N \cup [n,\infty)$ &
$(-1+2\N) \cup [n,\infty)$\\
& G--K--R & G--K--R & G--K--R \\ \hline
{\bf {Sub-additivity }}  &\multicolumn{3}{c|}{}\\
\hline
$k=1$ & $(-\infty,0] \cup \{ 1 \} \text{ if } n=2$, & & $\{ 0, 1 \}
\text{ if } n=2$,\\
& $\{ 0,1 \} \text{ if } n>2$ & $\emptyset$ & $\{ 1 \} \text{ if } n>2$\\
& G--K--R & G--K--R & G--K--R \\ \hline
$2 \leq k \leq n$ & $\{ 1 \}$ & $\emptyset$ & $\{ 1 \}$\\
& G--K--R & G--K--R & G--K--R \\ \hline
\end{tabular}
\vspace*{2ex}
\caption{Summary of real Hadamard powers preserving Loewner properties,
with additional rank constraints (G--K--R refers to the current
paper)}\label{H-list}
\end{table}

As the present paper achieves a complete classification of the powers
preserving various Loewner properties, previous contributions in the
literature are also included in Table \ref{H-list} for completeness. Note
that there are many cases which had not been considered previously and
which we settle completely in the paper.
For sake of brevity, we will only briefly sketch proofs for the
previously addressed cases (in order to mention how the rank constraint
affects the problem). We instead focus our attention on the cases that
remain open in the literature. Our original contributions in this paper
are:
\begin{itemize}
\item We complete all of the previously unsolved cases involving powers
preserving positivity, monotonicity, and convexity.
\item We classify all powers preserving super-additivity and
sub-additivity. These properties have not been explored in the
literature in the entrywise setting.
\item We also examine negative powers preserving Loewner properties,
which were also previously unexplored.
\item Finally, we extend all of the above results - as well as those in
the literature - by introducing rank constraints. Once again, we are able
to obtain a complete classification of all real powers preserving the
five aforementioned Loewner properties.
\end{itemize}

Similar to many settings in the literature (see \cite{Walch_survey}), one
can define Hadamard critical exponents for positivity, monotonicity,
convexity, and super-additivity for $\bp_n^k$ - these are the phase
transition points akin to \cite{FitzHorn}. From Theorem \ref{thm:main},
we immediately obtain the Hadamard critical exponents (CE) for the four
Loewner properties for matrices with rank constraints:

\begin{corollary}\label{Ccritexp}
Suppose $n \geq 2$ and $1 \leq k \leq n$. The Hadamard critical exponents
for positivity, monotonicity, convexity, and super-additivity for
$\bp_n^k$ are $n-2$, $n-1$, $n,n$ respectively if $2 \leq k \leq n$, and
$0,0,1,n$ respectively if $k=1$. In particular, they are completely
independent of the type of entrywise power used.
\end{corollary}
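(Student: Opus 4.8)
The plan is to extract the four critical exponents directly from the complete classification furnished by Theorem \ref{thm:main}, so the corollary requires no new computation beyond correctly reading Table \ref{H-list}. First I would fix the operative definition: in keeping with the phase transition established by FitzGerald and Horn \cite{FitzHorn}, the Hadamard critical exponent for a Loewner property $J$ on $\bp_n^k$ is the smallest threshold $c \geq 0$ for which the entire half-line $[c,\infty)$ consists of real powers preserving $J$. By Theorem \ref{thm:main}, each of the relevant preserving sets $\calh_J(n,k)$ (and its even and odd analogues) is displayed in Table \ref{H-list} as a union of a discrete set of integers with an unbounded ray $[c,\infty)$; the critical exponent is then precisely the left endpoint $c$ of that ray.

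With this in hand, the determination of the values is immediate. For $2 \leq k \leq n$, the positivity, monotonicity, and convexity sets carry the rays $[n-2,\infty)$, $[n-1,\infty)$, and $[n,\infty)$ respectively, and the super-additivity set carries the ray $[n,\infty)$; reading off left endpoints yields the critical exponents $n-2$, $n-1$, $n$, $n$. For $k=1$, positivity and monotonicity hold on all of $[0,\infty)$ and convexity on $[1,\infty)$, so their critical exponents are $0$, $0$, and $1$; super-additivity (whose classification is uniform over $1 \leq k \leq n$) again carries the ray $[n,\infty)$, giving critical exponent $n$. This accounts for both lists of values in the statement.

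It remains to justify the final assertion that the critical exponents do not depend on which entrywise power is used. The key structural observation is that, within each row of Table \ref{H-list}, the three columns corresponding to $f_\alpha(x)=x^\alpha$, the even extension $\phi_\alpha(x)=|x|^\alpha$, and the odd extension $\psi_\alpha(x)=\sign(x)|x|^\alpha$ differ only in their discrete integer parts -- $\N$, $2\N$, and $-1+2\N$ respectively -- while the unbounded ray $[c,\infty)$ is one and the same across all three. Since the critical exponent is by definition the left endpoint of this common ray and is insensitive to the discrete part, it coincides for $f_\alpha$, $\phi_\alpha$, and $\psi_\alpha$. I do not anticipate any genuine obstacle here: the entire content of the corollary is a direct transcription of the classification in Theorem \ref{thm:main}, and the only point requiring care is the bookkeeping that verifies the ray endpoints indeed agree across the three columns, which is apparent from the table.
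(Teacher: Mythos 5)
Your proposal is correct and matches the paper's treatment: the corollary is obtained there as an immediate consequence of Theorem \ref{thm:main}, by reading off the left endpoints of the rays $[c,\infty)$ in Table \ref{H-list} and noting that these rays are common to the $f_\alpha$, $\phi_\alpha$, and $\psi_\alpha$ columns, with only the discrete integer parts differing. Your explicit statement of the definition of the critical exponent and the bookkeeping for the $k=1$ case is exactly the (unwritten) argument the paper intends.
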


An interesting consequence of Corollary \ref{Ccritexp} is that if $k \geq
2$, then the sets of fractional Hadamard powers $f_\alpha, \phi_\alpha$,
or $\psi_\alpha$ that are Loewner positive, monotone, convex, or
super-additive on $\bp_n^k$ do {\it not} depend on $k$. Thus, entrywise
powers that preserve such properties on $\bp_n^2$ automatically preserve
them on all of $\bp_n$.
Corollary \ref{Ccritexp} also shows that the rank $1$ case is different
from that of other $k$, in that three of the critical exponents do not
depend on $n$ if $k=1$. This is not surprising for positivity because the
functions $f_\alpha, \phi_\alpha, \psi_\alpha$ are all multiplicative.
Furthermore, note that if $2 \leq k \leq n$, then entrywise maps are
Loewner convex on $\bp_n^k(I)$ if and only if they are Loewner
super-additive. Finally, the structure of the $\calh_J(n,k)$-sets is
different for $J =$ sub-additivity, compared to the other Loewner
properties.

\subsection*{Organization of the paper}

We prove Theorem \ref{thm:main} by systematically studying entrywise
powers that are (a) positive, (b) monotone, (c) convex, and (d)
super/sub-additive with respect to the Loewner ordering.
Thus in each of the next four sections, we gather previous results from
the literature, and extend these in order to compute the sets
$\calh^I_J(n,k)$ for matrices with rank constraints. In doing so, as a
special case we can complete the classification of powers $f_\alpha,
\phi_\alpha, \psi_\alpha$ that are Loewner positive, monotone, or convex,
for all matrices in $\bp_n = \bp_n^n$ (i.e., with no rank constraint). In
Section \ref{Ssuper}, we then classify the entrywise real powers that are
super/sub-additive and in the process demonstrate an interesting
connection to Loewner convexity. We conclude the paper by discussing
related questions and extensions to other power functions in Section
\ref{S5}.

\section{Characterizing entrywise powers that are Loewner positive}

The study of Hadamard powers originates in the work of FitzGerald and
Horn \cite{FitzHorn}. We begin our analysis by stating one of their main
results that characterizes the Hadamard powers preserving Loewner
positivity. 

\begin{theorem}[FitzGerald and Horn, {\cite[Theorem
2.2]{FitzHorn}}]\label{thm:fitz_horn_fractional}
Suppose $A \in \bp_n([0,\infty))$ for some $n \geq 2$. Then
$A^{\circ\alpha} \in \bp_n$ for all $\alpha \in \N \cup [n-2,\infty)$. If
$\alpha \in (0,n-2)$ is not an integer, then there exists $A \in
\bp_n((0,\infty))$ such that $A^{\circ\alpha} \notin \bp_n$. More
precisely, Loewner positivity is not preserved for $A = ((1 + \epsilon
ij))_{i,j=1}^n$, for all sufficiently small $\epsilon > 0$ with $\alpha
\in (0,n-2) \setminus \N$.
\end{theorem}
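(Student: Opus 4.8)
The plan is to prove the theorem in three parts, matching its three assertions.

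**Sufficiency for integer and large powers.** First I would handle $\alpha \in \N$: if $\alpha = m$ is a nonnegative integer, then $A^{\circ m}$ is the $m$-fold entrywise product of $A$ with itself, and by the Schur product theorem the Hadamard product of positive semidefinite matrices is positive semidefinite, so $A^{\circ m} \in \bp_n$. (The cases $m=0,1$ are immediate.) For the continuous range $\alpha \geq n-2$, this is the substantive half. The natural approach is induction on $n$. The base case is essentially $n=2$, where one checks that every $2\times 2$ positive semidefinite matrix with nonnegative entries stays positive semidefinite under any power $\alpha \geq 0$ (a rank-$1$-type computation on the $2\times 2$ determinant). For the inductive step, I would use the integral representation that underlies the FitzGerald--Horn argument: write
\begin{equation*}
a^\alpha - b^\alpha = \alpha \int_0^1 (b + t(a-b))^{\alpha-1}\,(a-b)\,dt,
\end{equation*}
which expresses the difference of $\alpha$-powers as an integral of $(\alpha-1)$-powers against the rank-controlled factor $(a_{ij}-b_{ij})$. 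Applying this entrywise lets one relate positivity of $A^{\circ\alpha}$ to positivity of powers $A^{\circ(\alpha-1)}$ on matrices of one lower effective dimension, so that the threshold drops from $n-2$ to $n-3$ and the induction closes.

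**Necessity (sharpness) below the critical exponent.** For the second and third assertions I would exhibit the explicit family $A_\epsilon = ((1+\epsilon ij))_{i,j=1}^n$ for small $\epsilon>0$ and show that $A_\epsilon^{\circ\alpha} \notin \bp_n$ whenever $\alpha \in (0,n-2)\setminus\N$. First I would verify that $A_\epsilon \in \bp_n((0,\infty))$: it is the entrywise product (or a rank-two perturbation) of the all-ones matrix with $((1+\epsilon ij))$, and for small $\epsilon$ it is a small perturbation of a positive semidefinite matrix with strictly positive diagonal, hence lies in $\bp_n$ with positive entries. The crux is to expand $(1+\epsilon ij)^\alpha$ as a power series in $\epsilon$:
\begin{equation*}
(1+\epsilon ij)^\alpha = \sum_{r=0}^{\infty} \binom{\alpha}{r} \epsilon^r (ij)^r = \sum_{r=0}^{\infty} \binom{\alpha}{r} \epsilon^r\, i^r j^r.
\end{equation*}
Each term $i^r j^r$ is a rank-one matrix $u_r u_r^T$ with $u_r = (1^r,\dots,n^r)^T$, and the vectors $u_0,\dots,u_{n-1}$ are linearly independent (a Vandermonde argument). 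Thus, truncating the series after the first $n$ terms, $A_\epsilon^{\circ\alpha}$ is, to leading order in each relevant power of $\epsilon$, a combination of $n$ independent rank-one matrices with coefficients $\binom{\alpha}{r}$.

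**Extracting a negative eigenvalue.** The heart of the necessity argument is to show that the binomial coefficient $\binom{\alpha}{n-1}$ controls the sign of a suitable quadratic form. Since $\alpha \in (0,n-2)$ is a non-integer, one checks that $\binom{\alpha}{n-1} = \frac{\alpha(\alpha-1)\cdots(\alpha-n+2)}{(n-1)!}$ is nonzero and, because $\alpha$ lies strictly below $n-2$, has a sign that makes the coefficient of $u_{n-1}u_{n-1}^T$ "wrong." Concretely, I would choose a test vector $v$ lying in the kernel of $u_0^T,\dots,u_{n-2}^T$ (possible since these are $n-1$ independent functionals on $\R^n$, leaving a one-dimensional orthogonal complement) so that the lower-order terms $r \leq n-2$ annihilate $v$; then
\begin{equation*}
v^T A_\epsilon^{\circ\alpha} v = \binom{\alpha}{n-1}\epsilon^{n-1}\,(u_{n-1}^T v)^2 + O(\epsilon^{n}),
\end{equation*}
and for $\epsilon$ small the sign is governed entirely by $\binom{\alpha}{n-1}$. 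I expect the \textbf{main obstacle} to be precisely this sign analysis: one must verify that for every non-integer $\alpha \in (0,n-2)$ the coefficient $\binom{\alpha}{n-1}$ is strictly negative (equivalently, that an odd number of the factors $\alpha - r$, $0 \leq r \leq n-2$, are negative), and simultaneously control the error term uniformly so that it does not overwhelm the leading term. Managing the interplay between the order of vanishing in $\epsilon$ and the combinatorial sign of $\binom{\alpha}{n-1}$ is the delicate point; once it is settled, $v^T A_\epsilon^{\circ\alpha} v < 0$ for small $\epsilon$, proving $A_\epsilon^{\circ\alpha}\notin\bp_n$ and establishing sharpness.
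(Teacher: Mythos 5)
The paper does not prove this theorem itself (it is quoted from FitzGerald--Horn), so your proposal must be judged against the standard argument. Your sufficiency sketch follows the right strategy (Schur product theorem for $\alpha\in\N$, induction via the integral identity for $\alpha\geq n-2$), but it omits the one idea that makes the induction work: $B$ must be chosen as the specific rank-one matrix $b_{ij}=a_{in}a_{nj}/a_{nn}$ whose last row and column agree with those of $A$, so that $A-B\geq 0$ (Schur complement) and $A-B$ has zero last row and column; only then does the Hadamard factor $(A-B)$ reduce the integrand to an $(n-1)\times(n-1)$ problem and let the threshold drop by one. Saying "matrices of one lower effective dimension" without this choice of $B$ is not yet a proof.

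The necessity part contains a genuine error. You choose $v\perp u_0,\dots,u_{n-2}$ so that the leading term is $\binom{\alpha}{n-1}\epsilon^{n-1}(u_{n-1}^Tv)^2$, and you assert that the proof reduces to checking $\binom{\alpha}{n-1}<0$ for all non-integer $\alpha\in(0,n-2)$. That claim is false: with $k=\lfloor\alpha\rfloor$, the factors $\alpha-j$ for $j=0,\dots,n-2$ include exactly $n-2-k$ negative ones, and this count need not be odd. For instance, $n=5$, $\alpha=3/2$ gives $\binom{3/2}{4}=\tfrac{(3/2)(1/2)(-1/2)(-3/2)}{24}>0$, so your quadratic form is \emph{positive} for small $\epsilon$ and the argument fails. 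The correct choice (as in FitzGerald--Horn) is to take $v$ orthogonal to $u_0,\dots,u_{k+1}$ only, where $k=\lfloor\alpha\rfloor\leq n-3$; this is at most $n-1$ conditions, so a suitable nonzero $v$ with $u_{k+2}^Tv\neq 0$ exists by the Vandermonde independence, and the leading term $\binom{\alpha}{k+2}\epsilon^{k+2}(u_{k+2}^Tv)^2$ is genuinely negative because $\binom{\alpha}{k+2}$ contains exactly one negative factor $\alpha-k-1$. A smaller point: your justification that $A_\epsilon\in\bp_n$ as "a small perturbation of a positive semidefinite matrix" is not valid reasoning (the all-ones matrix is singular); instead write $A_\epsilon=\mathbf{1}\mathbf{1}^T+\epsilon u_1u_1^T$ explicitly as a sum of two rank-one positive semidefinite matrices.
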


\noindent Thus, $\pos(n,n) = \N \cup [n-2,\infty)$ for all $2 \leq n \in
\N$. Additionally, Hiai \cite{Hiai2009} showed that the same results as
above hold for the critical exponent for the even and odd extensions
$\phi_\alpha$ and $\psi_\alpha$:

\begin{theorem}[{Hiai, \cite[Theorem 5.1]{Hiai2009}}]\label{thm:hiai}
If $n \geq 2$ and $\alpha \geq n-2$, then $\alpha \in \pos^\phi(n,n) \cap
\pos^\psi(n,n)$.
\end{theorem}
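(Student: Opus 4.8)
The plan is to deduce both claims from the FitzGerald--Horn circle of ideas behind Theorem~\ref{thm:fitz_horn_fractional}, via an induction on $n$ in which the exponent is lowered by a Schur-product recursion. Fix real $A\in\bp_n$; I first record two harmless reductions. If some diagonal entry $a_{ii}$ vanishes, then the $i$-th row and column of $A$ vanish, so each of $\phi_\lambda[A],\psi_\lambda[A]$ is a padded copy of an $(n-1)$-dimensional problem for which $\lambda\ge n-2\ge (n-1)-2$ still holds; hence I may assume $a_{ii}>0$ for all $i$. Conjugating by $D=\diag(a_{ii}^{1/2})$ and writing $c_{ij}:=a_{ij}/\sqrt{a_{ii}a_{jj}}$, one checks entrywise that $\phi_\lambda[A]=\diag(a_{ii}^{\lambda/2})\,\phi_\lambda[C]\,\diag(a_{jj}^{\lambda/2})$ and likewise for $\psi_\lambda$, so positivity is unaffected and I may assume $A=C$ is a correlation matrix with entries in $[-1,1]$.

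The engine of the argument is the pair of entrywise identities
\begin{equation*}
\psi_\lambda[A]=A\circ\phi_{\lambda-1}[A],\qquad \phi_\lambda[A]=A\circ\psi_{\lambda-1}[A],
\end{equation*}
valid for all real $\lambda$ (and at $a_{ij}=0$ under the stated conventions), which follow from $\sgn(x)|x|^\lambda=x\,|x|^{\lambda-1}$ and $|x|^\lambda=x\,\sgn(x)|x|^{\lambda-1}$. Since $A\succeq 0$, the Schur product theorem yields $\phi_{\lambda-1}[A]\succeq0\Rightarrow\psi_\lambda[A]\succeq0$ and $\psi_{\lambda-1}[A]\succeq0\Rightarrow\phi_\lambda[A]\succeq0$. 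Chaining these, the claim at any $\lambda\ge n-2$ reduces to the claim at the single exponent $\lambda_0:=\lambda-\lfloor\lambda-(n-2)\rfloor\in[n-2,n-1)$; all intermediate exponents stay $\ge n-2$, so no negative exponents arise for $n\ge 3$. Thus it suffices to prove, for each $n$ and each $\lambda\in[n-2,n-1)$, that $\phi_\lambda[A]$ and $\psi_\lambda[A]$ are positive semidefinite; the case $n=2$ (where $\lambda\ge 0$) is an immediate $2\times2$ determinant check and anchors the induction on $n$.

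For the inductive step I reduce positivity to a single determinant inequality. Assuming the full result in dimension $n-1$, every $(n-1)\times(n-1)$ principal submatrix of $\phi_\lambda[A]$ equals $\phi_\lambda$ applied to a principal correlation submatrix of $A$, with $\lambda\ge n-2\ge (n-1)-2$, hence is positive semidefinite. By Cauchy interlacing its second-smallest eigenvalue is bounded below by the smallest eigenvalue of such a submatrix, so $\phi_\lambda[A]$ has at most one negative eigenvalue. Consequently (after perturbing $A$ to $(1-\epsilon)A+\epsilon I$ and letting $\epsilon\downarrow 0$ to avoid the degenerate boundary) it is enough to show $\det\phi_\lambda[A]\ge0$, and identically for $\psi_\lambda$.

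The determinant inequality is the analytic heart, and I expect it to be the main obstacle. One cannot simply reduce to the nonnegative case of Theorem~\ref{thm:fitz_horn_fractional} by a $\pm1$-diagonal sign switch, since $(|a_{ij}|)$ is rarely itself positive semidefinite and only balanced sign patterns can be switched to all-nonnegative. Instead, the underlying FitzGerald--Horn computation expresses $\det A^{\circ\lambda}$ for $\lambda\in[n-2,n-1)$ as a nonnegative integral combination of determinants of Hadamard products of $A$, each positive semidefinite by the Schur product theorem. The substance here is that this integral representation survives the even and odd extensions: one routes the fractional part $\lambda-(n-2)$ through the same kernel applied to $\phi$ and $\psi$, tracking signs via the identities above so that the resulting combination stays manifestly nonnegative. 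This sign-bookkeeping is precisely Hiai's refinement of the FitzGerald--Horn calculation; as the statement is already in the literature, I would only indicate this adaptation and otherwise rely on the reductions above.
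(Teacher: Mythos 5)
This theorem is not proved in the paper; it is quoted from Hiai \cite[Theorem 5.1]{Hiai2009}, so your proposal must stand on its own. Your preliminary reductions are sound: the passage to correlation matrices, the identities $\psi_\lambda[A]=A\circ\phi_{\lambda-1}[A]$ and $\phi_\lambda[A]=A\circ\psi_{\lambda-1}[A]$ together with the Schur product theorem (which correctly reduce everything to a single exponent $\lambda_0\in[n-2,n-1)$, handled for $\phi$ and $\psi$ simultaneously), and the interlacing argument showing that, under the inductive hypothesis in dimension $n-1$, the matrices $\phi_{\lambda_0}[A]$ and $\psi_{\lambda_0}[A]$ have at most one negative eigenvalue. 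But the Schur recursion leaves the genuinely hard range $[n-2,n-1)$ untouched, and everything then hinges on the determinant inequality, which you explicitly do not prove: you assert that ``the underlying FitzGerald--Horn computation expresses $\det A^{\circ\lambda}$ as a nonnegative integral combination of determinants of Hadamard products of $A$'' and that this representation ``survives the even and odd extensions.'' No such determinant representation appears in FitzGerald--Horn; their proof of Theorem \ref{thm:fitz_horn_fractional} is not a determinant computation at all, but an induction on $n$ via the integral identity $x^\alpha-y^\alpha=\alpha\int_0^1(x-y)(y+t(x-y))^{\alpha-1}\,dt$ applied after peeling off the rank-one matrix $\xi\xi^T$ with $\xi$ the last column of $A$ normalized by $\sqrt{a_{nn}}$, so that $A-\xi\xi^T\succeq0$ has zero last row and column and the inductive hypothesis in dimension $n-1$ applies to the integrand. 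Deferring the ``analytic heart'' to an unspecified adaptation of a computation that does not exist in the form you describe is a genuine gap, not a citation of known material.

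There is also a secondary flaw in the reduction to the determinant: with at most one negative eigenvalue, $\det\phi_\lambda[A]\geq0$ does not rule out one strictly negative and one zero eigenvalue. One needs strict positivity of the determinant for positive definite $A$ together with a connectedness argument along the path $(1-t)I+tA$, so that no eigenvalue can cross zero; your perturbation $(1-\epsilon)A+\epsilon I$ alone does not supply this. The cleanest repair is to abandon the determinant route entirely and run Hiai's actual argument inside your framework: prove $\phi_\alpha[A]\succeq0$ and $\psi_\alpha[A]\succeq0$ by simultaneous induction on $n$, using $\phi_\alpha'=\alpha\psi_{\alpha-1}$ and $\psi_\alpha'=\alpha\phi_{\alpha-1}$, the identity
\begin{equation*}
\phi_\alpha[A]-\phi_\alpha[\xi\xi^T]=\alpha\int_0^1 (A-\xi\xi^T)\circ\psi_{\alpha-1}\bigl[\xi\xi^T+t(A-\xi\xi^T)\bigr]\,dt,
\end{equation*}
and the fact that $A-\xi\xi^T$ is supported on the leading $(n-1)\times(n-1)$ block, where the inductive hypothesis applies since $\alpha-1\geq(n-1)-2$. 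That replaces your unproved determinant step with a complete argument.
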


\begin{remark}
Fix $0 < R \leq \infty$. It is easy to see using a rescaling argument,
that entrywise powers preserving positivity, monotonicity, or convexity
on $\bp_n(-R,R)$ also preserve the respective property on $\bp_n(\R)$,
and vice versa. Thus in the present paper we only work with $\bp_n(\R)$
(or in case of the usual powers $f_\alpha(x) = x^\alpha$, with
$\bp_n([0,\infty))$).
\end{remark}

Theorem \ref{thm:hiai} shows that $[n-2, \infty)$ is contained in both
$\pos^\phi(n,n)$ and $\pos^\psi(n,n)$. It is natural to ask for which
$\alpha \in (0,n-2)$ do the Hadamard powers $\phi_\alpha, \psi_\alpha$
preserve positivity. This question was answered by Bhatia and Elsner for
the powers $\phi_\alpha$:

\begin{theorem}[{Bhatia and Elsner, \cite[Theorem
2]{Bhatia-Elsner}}]\label{thm:bhatia_elsner}
Suppose $n \geq 2$, and $r \in (0,n-2) \setminus 2 \N$ is real. Then $r
\notin \pos^\phi(n,n)$.
\end{theorem}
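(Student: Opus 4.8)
The plan is to split the claim according to whether $r$ is an integer, since the two ranges of $r$ require genuinely different witnesses.

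First I would dispose of the case $r \in (0,n-2) \setminus \N$. Here FitzGerald--Horn (Theorem~\ref{thm:fitz_horn_fractional}) already supplies a matrix $A \in \bp_n((0,\infty))$ with $A^{\circ r} \notin \bp_n$; concretely $A = ((1+\epsilon ij))$ for all sufficiently small $\epsilon > 0$. Since all entries of $A$ are strictly positive, $\phi_r[A] = (|a_{ij}|^r) = (a_{ij}^r) = A^{\circ r}$, so the very same matrix witnesses $r \notin \pos^\phi(n,n)$, and no new work is needed. The substance of the theorem is therefore the case where $r$ is an \emph{odd} positive integer with $r < n-2$ (equivalently $r \le n-3$, so $n \ge 4$).

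In this case positive-entry witnesses cannot exist: for nonnegative $A$ one has $\phi_r[A] = A^{\circ r}$, which is a Hadamard product of $r$ copies of $A$ and hence positive semidefinite by the Schur product theorem. Any witness must thus have genuinely negative entries, and the failure must exploit that $|x|^r \neq x^r$ for $x<0$ when $r$ is odd. I would next observe that the sign pattern $S = (\sgn(a_{ij}))$ carries the essential data: a congruence $A \mapsto DAD$ with $D = \diag(d_i)$, $d_i \in \{\pm1\}$, preserves $\bp_n$ and satisfies $\phi_r[DAD] = \phi_r[A]$, while multiplying $S$ entrywise by the rank-one pattern $(d_id_j)$. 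Hence any $S$ of the form $(c_ic_j)$ with $c \in \{\pm1\}^n$ can be gauged to the all-ones matrix, reducing to the impossible nonnegative case. A short computation makes this explicit and introduces the main technique: if $a_{ij} = c_ic_j + \epsilon x_ix_j$, then expanding $|a_{ij}|^r$ around $c_ic_j = \pm1$ for small $\epsilon$ gives
\begin{equation*}
\phi_r[A] = \sum_{m=0}^{r} \binom{r}{m}\epsilon^m w_m w_m^\top, \qquad w_m = ((c_ix_i)^m)_{i=1}^n,
\end{equation*}
a nonnegative combination of rank-one positive semidefinite matrices. Consequently a \emph{frustrated} sign pattern --- one not of the form $(c_ic_j)$, i.e.\ carrying a cycle with an odd number of negative edges --- is indispensable.

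I would therefore construct a single matrix $A \in \bp_n$ whose sign pattern is frustrated, tuning the magnitudes on the negative edges strictly below $1$ so that $A$ remains positive semidefinite (for instance a Gram matrix of unit vectors with prescribed obtuse angles, or a diagonally scaled variant of $2I - J$ supported on a small principal block), and then show that $|A|^{\circ r}$ is not positive semidefinite. The analytic tool is again an entrywise Taylor expansion: writing $A = A_0 + \epsilon X$ with $A_0$ the frustrated base and $X$ a rank-one positive semidefinite perturbation keeping $A \in \bp_n$, one expands $\phi_r[A] = \sum_m \epsilon^m C_m$ and evaluates the quadratic form on a test vector $w$ orthogonal to the ranges contributed by all orders $m < m_0$, where $m_0$ is the first \emph{odd} order. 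Because the pattern is frustrated, the coefficient matrix $C_{m_0}$ is no longer positive semidefinite, and $w^\top \phi_r[A]\, w < 0$ for all sufficiently small $\epsilon$. The main obstacle --- indeed the only delicate point --- is the bookkeeping at this critical order: one must arrange simultaneously that (i) $A$ is provably positive semidefinite despite its negative entries, and (ii) the leading sign-sensitive coefficient $C_{m_0}$ is certifiably indefinite after summing the contributions of all relevant index pairs. Controlling (ii) is exactly where the dimension enters, since the moment vectors $1, x, x^2, \dots, x^{n-1}$ and their sign-twisted analogues must stay linearly independent to furnish a $w$ that pairs nontrivially with $C_{m_0}$ while annihilating the lower orders; this is possible precisely when $r < n-2$, matching the FitzGerald--Horn threshold. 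Establishing the correct sign of $C_{m_0}$ under these constraints is the computational heart of the argument.
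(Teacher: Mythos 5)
Your reduction of the non-integer case to FitzGerald--Horn is correct and is exactly the right move: for $A = ((1+\epsilon ij))$ all entries are positive, so $\phi_r[A] = A^{\circ r}$ and the same witness works. Your structural observations about the odd-integer case are also sound --- nonnegative witnesses are ruled out by the Schur product theorem, the gauge $A \mapsto DAD$ shows that a balanced sign pattern $(c_ic_j)$ can never produce a counterexample, and hence any witness must carry a frustrated sign pattern. But at the point where the theorem actually has content beyond FitzGerald--Horn, the proposal stops: you never exhibit a frustrated positive semidefinite matrix $A$, never identify the test vector $w$, and never verify the sign of the critical coefficient $C_{m_0}$. You acknowledge this yourself (``establishing the correct sign of $C_{m_0}$ \ldots is the computational heart of the argument''), and that heart is missing. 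Worse, the plan as stated has a structural difficulty: for an integer $r$ the expansion of $|a_{ij}+\epsilon x_ix_j|^r$ terminates, all binomial coefficients $\binom{r}{m}$ are nonnegative, and the first odd-order coefficient is $r\,\psi_{r-1}[A_0]\circ xx^T$ --- so certifying its indefiniteness amounts to proving that $\psi_{r-1}$ fails Loewner positivity on a suitable matrix, i.e., essentially the same theorem one parity over. Without an explicit anchor this is circular.

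For comparison, the paper does not prove this statement at all; it cites Bhatia--Elsner, whose argument (partially reproduced in the paper's proof of Theorem \ref{Tpositive}, Case 3, for the $\psi$ analogue) supplies precisely what your sketch lacks: the explicit rank-two witness $A_m = ((\cos((i-j)\pi/m)))$ with $m = \lfloor r\rfloor + 3$, the alternating test vector $(1,-1,1,\dots)$, and a proof that the resulting eigenvalue $f(p) = \sum_{j}(-1)^j\cos(j\pi/m)^p$ is strictly negative for $p$ in the relevant window. That last step is carried out by counting the real zeros of the generalized Dirichlet polynomial $f$ via Lemma \ref{Ldescartes} and locating them at consecutive integers, which is a genuinely different (and sharper) mechanism than a Taylor expansion around a perturbed base point. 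To repair your proof you would need either to import that construction or to produce an equally concrete frustrated matrix together with a verifiable negativity certificate; as written, the odd-integer case is a plan rather than a proof.
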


Note that the above results correspond to the unconstrained-rank case:
$\bp_n(I) = \bp_n^n(I)$. Our main result in this section refines Theorems
\ref{thm:hiai} and \ref{thm:bhatia_elsner}, and completely characterizes
the sets $\pos(n,k)$, $\pos^\phi(n,k)$, and $\pos^\psi(n,k)$ for all $1
\leq k \leq n$.

\begin{theorem}\label{Tpositive}
Suppose $2 \leq k \leq n$ are integers with $n \geq 3$. Then,
\begin{equation}
\pos(n,k) = \N \cup [n-2,\infty), \quad \pos^\phi(n,k) = 2\N \cup
[n-2,\infty), \quad \pos^\psi(n,k) = (-1+2\N) \cup [n-2, \infty).
\end{equation}

\noindent If instead $k=1$ and/or $n=2$, then
\begin{equation}
\pos(n,k) = \pos^\phi(n,k) = \pos^\psi(n,k) = (0,\infty).
\end{equation}
\end{theorem}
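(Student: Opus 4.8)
The plan is to establish each of the three power sets by combining the known "above the critical exponent" results with new "below the critical exponent" obstructions, and to handle the rank parameter $k$ by showing that the relevant counterexamples can be realized in low rank. The core observation is the following rigidity: for $2 \leq k$, the set of powers preserving positivity on rank-$\leq k$ matrices turns out to coincide with the full-rank set, so essentially all the work is to (i) confirm the containments $\N \cup [n-2,\infty) \subseteq \pos(n,k)$, $2\N \cup [n-2,\infty) \subseteq \pos^\phi(n,k)$, and $(-1+2\N) \cup [n-2,\infty) \subseteq \pos^\psi(n,k)$, and (ii) rule out all remaining $\alpha$.

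For the containments in (i), I would argue as follows. The interval $[n-2,\infty)$ lies in all three sets by Theorems \ref{thm:fitz_horn_fractional} and \ref{thm:hiai}, since $\bp_n^k(I) \subseteq \bp_n(I)$ makes positivity on the rank-constrained set weaker than on the full set. For the nonnegative integer powers $\alpha = m \in \N$, positivity of $A^{\circ m}$ for $A \geq 0$ follows from the Schur product theorem (the Hadamard product of positive semidefinite matrices is positive semidefinite), and this is rank-independent, giving $\N \subseteq \pos(n,k)$. For $\phi_\alpha$ with $\alpha \in 2\N$ one has $\phi_{2m}(x) = |x|^{2m} = x^{2m} = f_{2m}(x)$, so $\phi_{2m}[A] = A^{\circ 2m} \geq 0$ by the same Schur argument; likewise for $\psi_\alpha$ with $\alpha = 2m-1$ odd one has $\psi_{2m-1}(x) = \sign(x)|x|^{2m-1} = x^{2m-1} = f_{2m-1}(x)$, so $\psi_{2m-1}[A] = A^{\circ(2m-1)} \geq 0$. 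This settles both integer families.

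The substance is (ii), ruling out the remaining powers. For $\pos^\phi(n,k)$ I must show that no $\alpha \in (0,n-2)$ outside $2\N$ works and, more delicately, that no odd or fractional power works at any value below $n-2$ even under the weakest rank constraint $k=2$; Theorem \ref{thm:bhatia_elsner} handles the real powers $r \in (0,n-2) \setminus 2\N$ at full rank, but I need the obstruction to persist in low rank. The key technical step, and I expect this to be the main obstacle, is to produce explicit low-rank positive semidefinite matrices whose images fail to be positive semidefinite; the natural candidate, following the last sentence of Theorem \ref{thm:fitz_horn_fractional}, is the rank-$2$ family $A = ((1 + \epsilon i j))_{i,j=1}^n = \mathbf{1}\mathbf{1}^\top + \epsilon\, v v^\top$ with $v = (1,2,\dots,n)^\top$, which is positive semidefinite of rank $2$ for $\epsilon > 0$. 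Applying $\phi_\alpha$ or $\psi_\alpha$ entrywise and taking a Taylor expansion in $\epsilon$, the leading-order obstruction to positivity is governed by the divided differences / the same determinantal computation as in FitzGerald--Horn, and one checks that a suitable principal minor becomes negative for small $\epsilon$ precisely when $\alpha$ is a non-integer below $n-2$ (respectively a non-even-integer for $\phi_\alpha$, a non-odd-integer for $\psi_\alpha$). For $\psi_\alpha$ one further exploits the sign pattern: negative powers and the failure of the even/odd parity are detected by testing on matrices with negative off-diagonal entries, which the $\R$-valued setting permits.

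Finally, for the degenerate cases $k=1$ or $n=2$, I would argue directly. A rank-one matrix in $\bp_n([0,\infty))$ has the form $u u^\top$ with $u \geq 0$, so $A^{\circ\alpha} = (u^{\circ\alpha})(u^{\circ\alpha})^\top \geq 0$ for every $\alpha > 0$ by multiplicativity of the power function on the entries; the same multiplicative structure gives $\phi_\alpha[uu^\top] = (\phi_\alpha \circ u)(\phi_\alpha \circ u)^\top$ and an analogous factorization for $\psi_\alpha$, so all three rank-$1$ sets equal $(0,\infty)$. The $n=2$ case reduces to a single $2\times 2$ positivity condition: for $A = \left(\begin{smallmatrix} a & b \\ b & c \end{smallmatrix}\right) \geq 0$ one has $ac \geq b^2 \geq 0$, and applying any $f_\alpha,\phi_\alpha,\psi_\alpha$ with $\alpha > 0$ preserves the inequality $a^\alpha c^\alpha \geq b^{2\alpha}$ together with nonnegativity of the diagonal, so again every positive power works, matching $n-2 = 0$.
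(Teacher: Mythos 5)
Your containments in step (i), and your treatment of the degenerate cases $k=1$ and $n=2$, are correct and match the paper's argument (multiplicativity for rank one, the $2\times 2$ determinant check, Schur product for integer powers, and Theorems \ref{thm:fitz_horn_fractional}--\ref{thm:hiai} for $[n-2,\infty)$). The case of $f_\alpha$ in step (ii) is also fine: the FitzGerald--Horn matrix $((1+\epsilon ij)) = \mathbf{1}\mathbf{1}^T + \epsilon vv^T$ is rank $2$, which is exactly how the paper gets $\pos(n,k) = \pos(n,2) = \N \cup [n-2,\infty)$.

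The genuine gap is in your main step for $\phi_\alpha$ and $\psi_\alpha$. The family $((1+\epsilon ij))$ has strictly positive entries, so on it $\phi_\alpha[A] = \psi_\alpha[A] = f_\alpha[A]$; moreover, for every positive \emph{integer} $\alpha$ one has $f_\alpha[A] = A^{\circ\alpha} \geq 0$ by the Schur product theorem. Hence no Taylor expansion or minor computation on this family can ever show that an odd integer $\alpha \in (0,n-2)$ fails for $\phi_\alpha$, or that an even integer fails for $\psi_\alpha$ --- which is precisely the content that distinguishes $\pos^\phi(n,k) = 2\N \cup [n-2,\infty)$ and $\pos^\psi(n,k) = (-1+2\N) \cup [n-2,\infty)$ from $\pos(n,k)$. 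Your closing sentence correctly observes that matrices with negative off-diagonal entries are needed, but gives no construction, and producing one that is simultaneously of rank $2$ (so that the rank-constrained statement follows) is the crux. The paper uses the cosine matrices $A_r = ((\cos((i-j)\pi/r)))$, which are rank $2$ since $A_r = uu^T + vv^T$ with $u_j = \cos(j\pi/r)$, $v_j = \sin(j\pi/r)$. For $\phi_\alpha$ it then quotes Bhatia--Elsner's Theorem 2 (whose counterexample is exactly this matrix, hence rank $2$); for $\psi_\alpha$ there is no prior result to quote, and the paper proves from scratch that $(1,-1,1,\dots,1)$ is an eigenvector of $\psi_p[A_{\alpha+3}]$ whose eigenvalue $f(p) = 1 + 2\sum_{j=1}^{\alpha/2+1}(-1)^j \cos(j\pi/(\alpha+3))^p$ is negative on $(\alpha-1,\alpha+1)$, via the zero-counting Lemma \ref{Ldescartes} for generalized Dirichlet polynomials, the known vanishing of $f$ at odd integers, and Rolle's theorem. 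None of this machinery appears in your proposal, so the exclusion of odd integers from $\pos^\phi(n,k)$ and of even integers from $\pos^\psi(n,k)$ below $n-2$ remains unproved.
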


\noindent To prove Theorem \ref{Tpositive}, recall the following
classical result about generalized Dirichlet polynomials.

\begin{lemma}[\cite{Laguerre,Bhatia-Elsner}]\label{Ldescartes}
Suppose $\lambda_0 > \lambda_1 > \cdots > \lambda_m > 0$ are real, and $f
: \R \to \R$ is of the form $f(x) := \sum_{i=0}^m a_i \lambda_i^x$ for
some $a_i \in \R$ with $a_0 \neq 0$. Then $f$ has at most $m$ zeros on
the real line.
\end{lemma}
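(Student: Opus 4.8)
The plan is to prove the lemma by induction on $m$, using a combination of normalization (dividing by a positive exponential), differentiation, and Rolle's theorem. First I would dispose of the base case $m=0$: here $f(x) = a_0 \lambda_0^x$ with $a_0 \neq 0$ and $\lambda_0 > 0$, so $\lambda_0^x > 0$ for all real $x$ and $f$ has no real zeros, which is indeed at most $m = 0$.

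For the inductive step, the first move is a reduction that keeps the number of zeros fixed while turning the lowest term into a constant. Since $\lambda_m^{-x} > 0$ for all $x$, the function $g(x) := \lambda_m^{-x} f(x)$ has exactly the same real zeros as $f$. Writing $\mu_i := \lambda_i / \lambda_m$, we have $g(x) = a_m + \sum_{i=0}^{m-1} a_i \mu_i^x$, where $\mu_0 > \mu_1 > \cdots > \mu_{m-1} > 1 > 0$ because $\lambda_0 > \cdots > \lambda_{m-1} > \lambda_m$. The point of this step is that the constant term $a_m$ will vanish upon differentiation, dropping the number of exponential terms by exactly one.

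Next I would differentiate: since $g$ is a finite linear combination of the smooth functions $\mu_i^x = e^{x \ln \mu_i}$, it is infinitely differentiable, and $g'(x) = \sum_{i=0}^{m-1} a_i (\ln \mu_i)\, \mu_i^x$. This is again a generalized Dirichlet polynomial, now with $m$ terms and distinct positive bases $\mu_0 > \cdots > \mu_{m-1} > 0$. Crucially, its leading coefficient is $a_0 \ln \mu_0$, which is nonzero because $a_0 \neq 0$ and $\mu_0 = \lambda_0/\lambda_m > 1$ forces $\ln \mu_0 > 0$. Hence $g'$ satisfies the hypotheses of the lemma with parameter $m-1$, and the induction hypothesis yields that $g'$ has at most $m-1$ real zeros. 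Finally, Rolle's theorem bounds the number of zeros of $g$ in terms of those of $g'$: between any two consecutive distinct zeros of $g$ lies a zero of $g'$, so $g$ has at most one more zero than $g'$, giving at most $m$ zeros. Since $f$ and $g$ share zeros, this proves the claim.

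I expect the main point requiring care to be the verification that differentiation preserves the nonvanishing leading coefficient and reduces the term count by exactly one, which is precisely where the normalization by $\lambda_m^{-x}$ (rather than by the largest base) is essential. A secondary subtlety, if one wishes to count zeros with multiplicity, is to run the Rolle step more carefully: at a zero of $g$ of multiplicity $\mu$ the derivative $g'$ has a zero of multiplicity $\mu-1$, and Rolle contributes one additional zero of $g'$ strictly between each pair of consecutive distinct zeros, so the total multiplicity of zeros of $g'$ is still at least that of $g$ minus one, and the induction goes through verbatim.
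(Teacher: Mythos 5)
The paper does not prove this lemma; it is quoted as a classical result with citations to Laguerre and to Bhatia--Elsner, so there is no in-paper argument to compare against. Your proof is correct and is precisely the standard argument behind those citations: normalize by $\lambda_m^{-x}$ so that the lowest term becomes a constant, differentiate to kill that constant and reduce the number of exponential terms by one while preserving the nonvanishing of the leading coefficient (since $\ln(\lambda_0/\lambda_m)>0$), apply the inductive hypothesis to $g'$, and conclude via Rolle's theorem. All the hypotheses are used where they should be, and the induction closes cleanly, so nothing further is needed.
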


We now proceed to characterize all of the sets $\pos(n,k),
\pos^\phi(n,k), \pos^\psi(n,k)$ for all $1 \leq k \leq n$.

\begin{proof}[Proof of Theorem \ref{Tpositive}]
First suppose $k = 1$, $n \geq 2$, and $A = u u^T \in \bp_n^1$ for some
$u \in \R^n$. Since the functions $f_\alpha, \psi_\alpha, \phi_\alpha$
are multiplicative for all $\alpha \in \R$, we have $A^{\circ \alpha} =
u^{\circ \alpha} (u^{\circ \alpha})^T \in \bp_n^1$ for $u \in
[0,\infty)^n$, and similarly for $\psi_\alpha[A], \phi_\alpha[A]$ for $u
\in \R^n$. The result thus follows for $k=1$. Furthermore, the result is
obvious for $n=2$ and all $\alpha \in \R$.

Now suppose that $2 \leq k \leq n$ and $n \geq 3$. We consider three
cases corresponding to the three functions $f(x) = f_\alpha(x),
\phi_\alpha(x)$, and $\psi_\alpha(x)$. \medskip

\noindent {\bf Case 1: $f(x) = f_\alpha(x)$.}
Consider the matrix
\[ A := \begin{pmatrix} 1 & 1/\sqrt{2} & 0\\ 1/\sqrt{2} & 1 &
1/\sqrt{2}\\ 0 & 1/\sqrt{2} & 1 \end{pmatrix} \oplus {\bf 0}_{(n-3)
\times (n-3)} \in \bp_n^2([0,\infty)). \]

\noindent It is easily verified that $f_\alpha[A] \notin \bp_n$ for all
$\alpha \leq 0$. Thus using Theorem \ref{thm:fitz_horn_fractional}, we
have
\[ \N \cup [n-2, \infty) = \pos(n,n) \subset \pos(n,k) \subset
(0,\infty). \]
Now note that the counterexample $((1 + \epsilon ij)) \in
\bp_n^2([0,\infty))$ provided in Theorem \ref{thm:fitz_horn_fractional}
is a rank $2$ matrix and hence $\alpha \not\in \pos(n,2)$ for any $\alpha
\in (0, n-2) \backslash \N$. Thus $\pos(n,2) = \N \cup [n-2, \infty)$.
Finally, since $\pos(n,k) \subset \pos(n,2)$, it follows that $\pos(n,k)
= \N \cup [n-2, \infty)$. 
\medskip

\noindent {\bf Case 2: $f(x) = \phi_\alpha(x)$.}
Note that $2\N \subset \pos^\phi(n,k)$ by the Schur product theorem.
Using Theorem \ref{thm:hiai} and Case 1, it remains to show that no odd
integer $\alpha \in (0,n-2)$ belongs to $\pos^\phi(n,k)$. To do so, first
define the matrix $A_r$ for $r \in \N$ as follows:
\begin{equation}\label{Erank2}
(A_r)_{ij} := (( \cos(i-j)\pi/r )), \quad 1 \leq i,j \leq r.
\end{equation}

\noindent Note that $A_n \in \bp_n^2$ since $A_n = uu^T +
v v^T$ where $u := (\cos (j \pi/n))_{j=1}^n$ and ${\bf
v} := (\sin (j \pi/n))_{j=1}^n$. By \cite[Theorem 2]{Bhatia-Elsner},  the
matrix $\phi_p[A_{\alpha+3}] \not\in \bp_{\alpha+3}$ for all $p \in
(\alpha-1,\alpha+1)$. In particular, $\phi_\alpha[A_{\alpha+3}] \notin
\bp_{\alpha+3}$. Since we are considering integer powers $\alpha$ such
that $\alpha < n-2$, we have $\alpha+3 \leq n$, so 
\[ A_{\alpha+3} \oplus {\bf 0}_{(n-\alpha-3) \times (n-\alpha-3)} \in
\bp^2_{n}, \qquad \phi_\alpha[ A_{\alpha+3} \oplus {\bf 0}_{(n-\alpha-3)
\times (n-\alpha-3)}] \notin \bp_{n}, \]

\noindent which proves that $\alpha \notin \pos^\phi(n,2)$ for any odd
integer $\alpha \in (0,n-2)$. Since $\pos^\phi(n,k) \subset
\pos^\phi(n,2)$, $\alpha \notin \pos^\phi(n,k)$. This proves Theorem
\ref{Tpositive} for the powers $\phi_\alpha$.\medskip

\noindent {\bf Case 3: $f(x) = \psi_\alpha(x)$.}
Note that $-1+2\N \subset \pos^\psi(n,k)$ by the Schur product theorem.
Using Theorem \ref{thm:hiai} and Case 1, it remains to show that no even
integer $\alpha \in (0,n-2)$ belongs to $\pos^\psi(n,k)$. To do so, we
will construct a matrix $C \in \bp_n^2$ such that $\psi_\alpha[C] \not
\in \bp_n$. We first claim that
\begin{equation}\label{eqn:claim}
\psi_p[A_{\alpha+3}] \notin \bp_{\alpha+3} \quad \forall \ p \in
(\alpha-1,\alpha+1),
\end{equation}

\noindent where $A_{\alpha+3}$ is defined as in \eqref{Erank2}. To prove
the claim, define
\begin{equation}\label{Eevalue}
f(p) := 1 + 2 \sum_{j=1}^{\alpha/2+1} (-1)^j \cos(j \pi / (\alpha+3))^p,
\qquad p \in (0,\infty).
\end{equation}

\noindent Now verify that $(1,-1,1,-1,\dots,1)$ is an eigenvector of
$\psi_p[A_{\alpha+3}]$ with corresponding eigenvalue
\[ \sum_{j=1}^{\alpha+3} (-1)^{j-1} (\psi_p[A_{\alpha+3}])_{1j} = f(p).
\]

\noindent We now show that the eigenvalue $f(p)$ is negative for $p \in
(\alpha-1,\alpha+1)$, proving that $\psi_p[A_{\alpha+3}] \notin
\bp_{\alpha+3}$. The function $f$ satisfies the assumptions of Lemma
\ref{Ldescartes}, and hence $f$ has at most $\alpha/2+1$ zeros on the
real line. Also note that
\[ f(p) = 1 + 2 \sum_{j=1}^{\alpha/2+1} (-1)^j \cos \left( \frac{j
\pi}{\alpha+3} \right)^p = \sum_{j=0}^{\alpha+2} (-1)^j \cos \left(
\frac{j \pi}{n} \right)^p = 0, \qquad \forall p \in (-1 + 2\N) \cap
(0,\alpha+3), \]

\noindent where the last equality is given by \cite[Lemma
2]{Bhatia-Elsner} (to apply {\it loc.~cit.}~we note that $0 < p <
n=\alpha+3$ and $p,n$ are both odd integers, so that $p+n = p + \alpha +
3$ is even).
Therefore $1, 3, \dots, \alpha-1,\alpha+1$ are precisely the $\alpha/2+1$
real roots of $f$. It follows that $f$ is continuous and nonzero on
$(\alpha-1,\alpha+1)$. Moreover, $f'$ has precisely $\alpha/2$ real zeros
$p_1 < \dots < p_{\alpha/2}$ by Rolle's Theorem and Lemma
\ref{Ldescartes}, where
\[ 1 < p_1 < 3 < p_2 < 5 < \dots < \alpha-1 < p_{\alpha/2} < \alpha+1. \]

\noindent Then $f$ is either strictly increasing or strictly decreasing
on $(p_{\alpha/2},\infty)$. Since $f(\alpha+1) = 0$ and $\lim_{p \to
\infty} f(p) = 1$, $f$ must be increasing on $(p_{\alpha/2},\infty)$. In
particular, $f$ is negative on $(\alpha-1,\alpha+1)$. Hence the
eigenvalue $f(p)$ of $\psi_p[A_{\alpha+3}]$ as given in \eqref{Eevalue}
is negative for $p \in (\alpha-1,\alpha+1)$. This proves claim
\eqref{eqn:claim} - in particular, $\psi_\alpha[A_{\alpha+3}] \notin
\bp_{\alpha+3}$. The matrix $C := A_{\alpha+3} \oplus {\bf
0}_{(n-\alpha-3) \times (n-\alpha-3)} \in \bp_n^2$ now satisfies
$\psi_\alpha[C] \not\in \bp_n$, showing that $\alpha \notin
\pos^\psi(n,2)$. It follows that $\alpha \notin \pos^\psi(n,k)$ for all
$2 \leq k \leq n$.
%
\end{proof}

\begin{remark}\label{Rbhatia}
We now come to an open question raised by Bhatia and Elsner in \cite[\S
3]{Bhatia-Elsner} - namely,\smallskip

\noindent {\it Given $p \in (0,\infty) \setminus 2 \N$, find the smallest
$n \in \N$ such that $\phi_p[A] \notin \bp_n$ for at least one matrix $A
\in \bp_n$.}\smallskip

\noindent Our result on the full characterization of the even extensions
of entrywise powers that preserve Loewner positivity, as given by Theorem
\ref{Tpositive}, allows us to answer this question.
By Theorem \ref{Tpositive}, the smallest $n \in \N$ such that $\phi_p[A]
\notin \bp_n$ for at least one matrix $A \in \bp_n$, is $n = \lfloor p
\rfloor + 3$. Similarly, one can ask the analogous question for $\psi$:
{\it given $p \in (0,\infty) \setminus (-1 + 2 \N)$, find the smallest $n
\in \N$ such that $\psi_p[A] \notin \bp_n$ for at least one $A \in
\bp_n$.} Once again by Theorem \ref{Tpositive}, the answer to this
question is $n = \lfloor p \rfloor + 3$.
\end{remark}

\begin{remark}
We note that an alternate approach to proving the claim in
\eqref{eqn:claim} is to conjugate $\psi_{\alpha}[A_{\alpha+3}]$ by the
orthogonal matrix $D_{\alpha+3} =\diag(1, -1, 1, \dots, 1)$ to obtain a
circulant matrix, and then follow the approach in \cite{Bhatia-Elsner}.
\end{remark}

\section{Characterizing entrywise powers that are Loewner monotone}

We now characterize the entrywise powers that are monotone with respect
to the Loewner ordering. The following theorem by FitzGerald and Horn
that is analogous to Theorem \ref{thm:fitz_horn_fractional} but for
monotonicity, answers the question for matrices with nonnegative entries.
In what follows, we denote by ${\bf 1}_{n \times n}$ the $n \times n$
matrix with all entries equal to $1$.

\begin{theorem}[FitzGerald and Horn, {\cite[Theorem
2.4]{FitzHorn}}]\label{thm:HF_monotone}
Suppose $A,B \in \bp_n([0,\infty))$ for some $n \geq 1$. If $A \geq B
\geq 0$, then $A^{\circ \alpha} \geq B^{\circ \alpha} \geq 0$ for $\alpha
\in \N \cup [n-1,\infty)$. If $\alpha \in (0,n-1)$ is not an integer,
then there exist $A \geq B \geq 0$ in $\bp_n([0,\infty))$ such that
$A^{\circ \alpha} \not\geq B^{\circ \alpha}$. More precisely, Loewner
monotonicity is not preserved for $A = ((1 + \epsilon ij))_{i,j=1}^n, B =
{\bf 1}_{n \times n}$, for all sufficiently small $\epsilon > 0$ with
$\alpha \in (0,n-1) \setminus \N$.
\end{theorem}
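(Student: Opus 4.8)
The plan is to treat the two directions separately, with the positive direction resting on the already-established positivity result, Theorem~\ref{thm:fitz_horn_fractional}.

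For the positive direction, I would first dispose of positive integers $\alpha = m$ using the telescoping identity $a^m - b^m = \sum_{k=0}^{m-1} a^k (a-b) b^{m-1-k}$, which entrywise reads $A^{\circ m} - B^{\circ m} = \sum_{k=0}^{m-1} A^{\circ k} \circ (A-B) \circ B^{\circ(m-1-k)}$; since $A, B, A-B \geq 0$, each summand is a Hadamard product of positive semidefinite matrices and hence lies in $\bp_n$ by the Schur product theorem. For real $\alpha \geq n-1$ (so $\alpha \geq 1$) I would instead use the integral representation $a^\alpha - b^\alpha = \alpha \int_0^1 (b + s(a-b))^{\alpha-1}(a-b)\, ds$, valid for $a \geq b \geq 0$. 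Writing $C(s) := (1-s)B + sA \in \bp_n([0,\infty))$, this yields $A^{\circ\alpha} - B^{\circ\alpha} = \alpha \int_0^1 C(s)^{\circ(\alpha-1)} \circ (A-B)\, ds$. Because $\alpha - 1 \geq n-2$, Theorem~\ref{thm:fitz_horn_fractional} gives $C(s)^{\circ(\alpha-1)} \in \bp_n$ for every $s$; another application of the Schur product theorem makes each integrand positive semidefinite, and integration preserves the Loewner cone.

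For the negative direction I would analyze the proposed pair $A = ((1+\epsilon ij)), B = \mathbf{1}_{n\times n}$ directly. Both are positive semidefinite (indeed $A = \mathbf{1}\mathbf{1}^T + \epsilon\, u_1 u_1^T$ with $u_1 := (1,2,\dots,n)^T$, so $A - B = \epsilon u_1 u_1^T \geq 0$), and $B^{\circ\alpha} = \mathbf{1}_{n\times n}$. Setting $u_k := (1^k, 2^k, \dots, n^k)^T$ and using the binomial expansion $(1+\epsilon ij)^\alpha = \sum_{k \geq 0}\binom{\alpha}{k}(\epsilon ij)^k$ (absolutely convergent once $\epsilon < n^{-2}$), the $k=0$ term is precisely $\mathbf{1}\mathbf{1}^T = B^{\circ\alpha}$ and cancels, leaving $A^{\circ\alpha} - B^{\circ\alpha} = \sum_{k\geq 1}\binom{\alpha}{k}\,\epsilon^k\, u_k u_k^T$. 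It then suffices to exhibit $y$ with $y^T(A^{\circ\alpha} - B^{\circ\alpha})y = \sum_{k \geq 1}\binom{\alpha}{k}\epsilon^k \langle y, u_k\rangle^2 < 0$ for all small $\epsilon > 0$.

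The heart of the matter is isolating a single term with a negative coefficient. I would write $\alpha \in (m, m+1)$ with $m \in \{0,\dots,n-2\}$ an integer, and take $k_0 := m+2 \leq n$. Exactly one factor of $\binom{\alpha}{k_0} = \frac{1}{k_0!}\prod_{j=0}^{m+1}(\alpha - j)$ is negative, namely $\alpha-(m+1)$, so $\binom{\alpha}{k_0} < 0$. The crucial linear-algebra input is that $u_1, \dots, u_n$ are linearly independent: the matrix $[u_1 \mid \cdots \mid u_n]$ factors as $\diag(1,\dots,n)$ times the Vandermonde matrix $[u_0 \mid \cdots \mid u_{n-1}]$, both invertible. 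Hence I can choose $y$ orthogonal to $u_1, \dots, u_{k_0-1}$ with $\langle y, u_{k_0}\rangle \neq 0$, whereupon $y^T(A^{\circ\alpha} - B^{\circ\alpha})y = \binom{\alpha}{k_0}\epsilon^{k_0}\langle y, u_{k_0}\rangle^2 + O(\epsilon^{k_0+1})$ is negative for all sufficiently small $\epsilon > 0$. The delicate case, which I expect to be the real obstacle, is $\alpha \in (n-2, n-1)$: there $\binom{\alpha}{k} > 0$ for every $k \leq n-1$, forcing the $k_0 = n$ term, and this is exactly where independence of the full system $u_1, \dots, u_n$ (not merely $u_1,\dots,u_{n-1}$) is needed, since $u_n$ lies in $\sspan(u_0, \dots, u_{n-1})$ but not in $\sspan(u_1, \dots, u_{n-1})$. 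The cancellation of the $u_0$-term, a consequence of choosing $B$ with all entries equal to $1$, is precisely what makes this final step succeed.
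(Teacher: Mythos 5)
Your proof is correct and follows essentially the same route as FitzGerald--Horn's original argument, which the paper cites rather than reproves: the telescoping Schur-product identity for integer powers reappears in the paper as Equation \eqref{Emonotone}, the integral representation combined with Theorem \ref{thm:fitz_horn_fractional} is the same device used in the proof of Theorem \ref{Tsuper}, and your counterexample analysis (binomial expansion of $((1+\epsilon ij))^{\circ\alpha}$, isolating the first negative coefficient $\binom{\alpha}{m+2}$ via linear independence of the Vandermonde-type vectors $u_1,\dots,u_n$) is exactly the classical argument. One small point of precision: the identity $a^\alpha - b^\alpha = \alpha\int_0^1 \bigl((1-s)b+sa\bigr)^{\alpha-1}(a-b)\,ds$ must be invoked for arbitrary $a,b \geq 0$ --- it is just the fundamental theorem of calculus along a segment contained in $[0,\infty)$ --- and not merely for $a \geq b$ as you state, since $A \geq B$ in the Loewner order does not give entrywise domination of the entries; this does not affect the validity of your argument.
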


We now discuss a parallel result to Theorem \ref{thm:HF_monotone} for
$\phi_\alpha$ and $\psi_\alpha$ that was proved by Hiai in
\cite{Hiai2009}. The theorem extends to the cone of positive semidefinite
matrices the standard real analysis result that a differentiable function
is nondecreasing if and only if its derivative is nonnegative.

\begin{theorem}[{Hiai, \cite[Theorems 3.2 and
5.1]{Hiai2009}}]\label{thm:Hiai_monotone}
Suppose $0 < R \leq \infty$, $I = (-R,R)$, and $f : I \to \R$.
\begin{enumerate}
\item For each $n \geq 3$, $f$ is monotone on $\bp_n(I)$ if and only if
$f$ is differentiable on $I$ and $f'$ is Loewner positive on
$\bp_n(I)$.
\item If $n \geq 1$ and $\alpha \geq n-1$, then $\alpha \in
\mono^\phi(n,n) \cap \mono^\psi(n,n)$.
\end{enumerate}
\end{theorem}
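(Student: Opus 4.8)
The plan is to establish part (1) first, since its ``if'' direction is precisely what is needed to deduce part (2). For the ``if'' direction of (1), suppose $f$ is differentiable on $I$ and $f'$ is Loewner positive on $\bp_n(I)$. Given $A \geq B \geq 0$ in $\bp_n(I)$, the segment $C(t) := (1-t)B + tA$ stays in $\bp_n(I)$ for $t \in [0,1]$: it dominates $B \geq 0$, and its entries are convex combinations of entries of $A, B$, hence lie in $I$. Differentiating entrywise along this segment and integrating (the integrand being continuous in $t$) gives
\[
f[A] - f[B] = \int_0^1 f'[C(t)] \circ (A - B)\, dt ,
\]
where $\circ$ denotes the Hadamard product. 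Each integrand is the Hadamard product of $f'[C(t)] \geq 0$ (by Loewner positivity of $f'$) with $A - B \geq 0$, hence is positive semidefinite by the Schur product theorem; since $\bp_n$ is a closed convex cone, the integral is positive semidefinite and $f[A] \geq f[B]$. This argument uses no lower bound on $n$, which I will exploit for part (2).

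For the ``only if'' direction of (1), assume $f$ is monotone on $\bp_n(I)$ with $n \geq 3$. Granting differentiability for the moment, Loewner positivity of $f'$ is the easy half: for $B \in \bp_n(I)$ and small $t > 0$, the matrix $B + t\,\mathbf{1}\mathbf{1}^T$ lies in $\bp_n(I)$ and dominates $B$, so $\tfrac{1}{t}\big(f[B + t\,\mathbf{1}\mathbf{1}^T] - f[B]\big) \geq 0$; its $(i,j)$ entry is the difference quotient $\tfrac{f(b_{ij}+t)-f(b_{ij})}{t} \to f'(b_{ij})$ as $t \to 0^+$, so $f'[B]$ is a limit of positive semidefinite matrices and is therefore positive semidefinite. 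The genuine difficulty, and what I expect to be the main obstacle, is showing that monotonicity forces $f$ to be \emph{differentiable} on all of $I$. My plan is a two-step regularity bootstrap: first deduce that $f$ is continuous and scalar nondecreasing on $[0,R)$ (read off from the diagonal entries of $f[A] \geq f[B]$ for diagonal $A, B$), and then upgrade continuity to differentiability by feeding rank-one perturbations $B + t\,uu^T$ into the $2 \times 2$ principal-minor inequalities of $f[B+tuu^T] - f[B] \geq 0$; these determinantal inequalities couple the one-sided difference quotients and force the left and right derivatives to coincide. The hypothesis $n \geq 3$ enters precisely here: one needs a third coordinate in order to realize an arbitrary target value $c \in I$ (possibly negative) as an off-diagonal entry while keeping enough freedom in the remaining entries to preserve positive semidefiniteness of the perturbed matrices.

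With part (1) available, part (2) follows from a short derivative computation together with the positivity result already in hand. For $\alpha > 1$ the extensions are differentiable on all of $\R$, with $\phi_\alpha' = \alpha\,\psi_{\alpha-1}$ and $\psi_\alpha' = \alpha\,\phi_{\alpha-1}$ (including at $0$, where both derivatives vanish). When $\alpha \geq n-1$ we have $\alpha - 1 \geq n - 2$, so by Theorem \ref{thm:hiai} both $\psi_{\alpha-1}$ and $\phi_{\alpha-1}$ are Loewner positive on $\bp_n$; hence $\phi_\alpha'$ and $\psi_\alpha'$ are Loewner positive, and the ``if'' direction of (1) yields $\alpha \in \mono^\phi(n,n) \cap \mono^\psi(n,n)$. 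Since only the ``if'' direction is used, this is valid for every $n$, settling $n \geq 3$ at once as well as $n = 2$ for $\alpha > 1$. The remaining boundary and small-dimensional cases I would handle directly: for $n = 1$ both functions restrict to $x \mapsto x^\alpha$ on $[0,R)$, which is nondecreasing for $\alpha \geq 0 = n-1$; the identity $\psi_1 = \mathrm{id}$ is trivially monotone; and $\phi_1 = |\cdot|$ is monotone on $\bp_2$ via the elementary estimate $(a-a')(b-b') \geq (c-c')^2 \geq (|c|-|c'|)^2$, valid whenever $A \geq B \geq 0$ with entries $a,b,c$ and $a',b',c'$.
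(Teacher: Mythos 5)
First, a point of reference: the paper does not prove this statement at all — it is quoted from Hiai \cite{Hiai2009} — so your attempt must be measured against Hiai's argument rather than anything in this manuscript. Your ``if'' direction of (1) and your deduction of (2) are essentially that standard argument: the identity $f[A]-f[B]=\int_0^1 f'[C(t)]\circ(A-B)\,dt$ plus the Schur product theorem, followed by $\phi_\alpha'=\alpha\psi_{\alpha-1}$, $\psi_\alpha'=\alpha\phi_{\alpha-1}$ and Theorem~\ref{thm:hiai} applied to the exponent $\alpha-1\geq n-2$; your separate treatment of $n=1$, of $\psi_1$, and of $\phi_1$ on $\bp_2$ via $(a-a')(b-b')\geq(c-c')^2\geq(|c|-|c'|)^2$ is correct. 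One small point you assert rather than justify: for a merely differentiable $f$ the integrand need not be continuous in $t$. This is repairable — Loewner positivity of $f'$ on $\bp_2(I)$ forces $|f'(y)|\leq f'(x)$ whenever $|y|\leq x<R$, so $f'$ is locally bounded, $f$ is locally Lipschitz, and the identity holds with the Lebesgue integral — and it is moot for the explicit $C^1$ power functions used in (2).

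The genuine gap is in the ``only if'' direction of (1). The assertion that Loewner monotonicity on $\bp_n(I)$ for $n\geq 3$ forces $f$ to be differentiable on all of $I$ is the substantive content of Hiai's Theorem 3.2, and you do not prove it: you describe a ``two-step regularity bootstrap'' as a plan and explicitly defer its execution, while the easy half you do carry out (positivity of $f'[B]$ via $B+t\,{\bf 1}_{n\times n}$) already presupposes that $f'$ exists. The $2\times 2$ principal-minor idea does produce inequalities coupling difference quotients, but converting them into existence of a two-sided derivative at every point of $(-R,R)$ — including at $0$ and at negative arguments, where one must realize a prescribed off-diagonal entry inside a positive semidefinite matrix while retaining enough freedom to perturb — requires a genuine multi-stage argument (continuity and monotonicity on $[0,R)$, then local Lipschitz estimates, then convexity-type inequalities forcing the one-sided derivatives to exist and coincide). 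Since the theorem is stated as an equivalence, and since this direction is precisely where the hypothesis $n\geq 3$ does real work, the attempt as written establishes part (2) and half of part (1), but not the theorem.
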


\noindent Theorem \ref{thm:Hiai_monotone} is a powerful result, but
cannot be applied directly to study entrywise functions preserving
matrices in the more restricted set $\bp_n^k$. We thus refine the first
part of the theorem to also include rank constraints.

\begin{proposition}\label{prop:mono_implies_pos_rank2}
Fix $0 < R \leq \infty$, $I = (-R, R)$, and $2 \leq k \leq n$.
Suppose $f: I \to \R$ is differentiable on $I$ and Loewner monotone on
$\bp_n^k(I)$. If $A \in \bp_n^k(I)$ is irreducible, then $f'[A] \in
\bp_n$.
\end{proposition}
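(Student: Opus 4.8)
The plan is to mimic the standard ``differentiate the monotonicity inequality'' argument (as in the proof of Theorem~\ref{thm:Hiai_monotone}(1)), but to respect the rank constraint by perturbing $A$ only \emph{within its own column space}. Concretely, I would first observe that for any positive semidefinite $H$ whose column space lies inside that of $A$, the matrix $A+tH$ stays in $\bp_n^k(I)$ for small $t>0$: it satisfies $A+tH\geq A\geq 0$, its entries remain in the open interval $I$ for $t$ small (there are finitely many of them), and $\mathrm{col}(A+tH)\subseteq\mathrm{col}(A)$ forces $\rk(A+tH)\leq \rk(A)\leq k$. Loewner monotonicity of $f$ on $\bp_n^k(I)$ then yields $f[A+tH]\geq f[A]$ for all such small $t>0$.

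Next I would extract the derivative. Entrywise, $\big(f[A+tH]-f[A]\big)_{ij}/t = \big(f(a_{ij}+t h_{ij})-f(a_{ij})\big)/t \to f'(a_{ij})\,h_{ij}$ as $t\to 0^+$, since each $a_{ij}\in I$ and $f$ is differentiable there; hence the difference quotients converge entrywise, and thus in norm, to the Hadamard product $f'[A]\circ H$. Each difference quotient is positive semidefinite by the previous step, and since $\bp_n$ is closed, the limit satisfies $f'[A]\circ H\geq 0$ for every positive semidefinite $H$ with $\mathrm{col}(H)\subseteq\mathrm{col}(A)$.

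I would then specialize to the rank-one choice $H=vv^T$ with $v\in\mathrm{col}(A)$. Writing $v\circ x$ for the entrywise product of vectors, the inequality $f'[A]\circ vv^T\geq 0$ reads $(v\circ x)^T f'[A]\,(v\circ x)\geq 0$ for all $x\in\R^n$; as $x$ varies this says precisely that $y^T f'[A]\, y\geq 0$ for every $y$ supported on $\supp(v)$. Thus it suffices to produce a single $v\in\mathrm{col}(A)$ of \emph{full} support, for then $y$ ranges over all of $\R^n$ and $f'[A]\in\bp_n$ follows at once.

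The crux --- and the only place irreducibility enters --- is this last existence claim, which I expect to be the main (though short) obstacle, as it is exactly what replaces the freedom, unavailable under a rank bound, to test against arbitrary rank-one directions. Here I would argue that $\mathrm{col}(A)=\mathrm{range}(A)$ cannot be contained in any coordinate hyperplane $\{y:y_i=0\}$: such containment is equivalent to $e_i\in\ker A$, i.e.\ to the $i$-th row and column of $A$ vanishing, which would isolate vertex $i$ in the graph of $A$ and make $A$ reducible (recall $A\neq 0$ and $n\geq 2$). Since a real vector space is never a union of finitely many \emph{proper} subspaces, $\mathrm{range}(A)$ cannot even be covered by the hyperplanes $\{y:y_i=0\}$ for $1\le i\le n$; any $v\in\mathrm{range}(A)$ avoiding all of them has $v_i\neq 0$ for every $i$, i.e.\ full support, which completes the proof.
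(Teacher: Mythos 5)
Your proof is correct and follows essentially the same route as the paper's: use irreducibility plus the fact that a real vector space is not a finite union of proper subspaces to produce a full-support vector $\zeta$ in the range of $A$, perturb $A$ by $\epsilon\,\zeta\zeta^T$ within $\bp_n^k(I)$, differentiate the monotonicity inequality to get $f'[A]\circ\zeta\zeta^T\geq 0$, and then strip off the rank-one factor. The only (cosmetic) difference is in the last step, where the paper multiplies by $\zeta^{\circ(-1)}(\zeta^{\circ(-1)})^T$ and invokes the Schur product theorem, while you substitute $y=\zeta\circ x$ directly into the quadratic form.
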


\begin{proof}
We first make the following observation (which in fact holds over any
infinite field):\medskip

\noindent {\it Suppose $A_{n \times n}$ is a symmetric irreducible
matrix. Then there exists a vector $\zeta \in \img A$ (the image of $A$)
with no zero component.}\medskip

\noindent To see why the observation is true, first suppose that all
vectors in $\img A$ have the $i$th component zero for some $1 \leq i \leq
n$ - i.e., $e_i^T A v = 0$ for all vectors $v$. Then the $i$th row (and
hence column) of $A$ is zero, which contradicts irreducibility. Now fix
vectors $w_i \in \img A$ for all $1 \leq i \leq n$, such that the $i$th
entry of $w_i$ is nonzero. Let $W := [ w_1 | w_2 | \dots | w_n]$; then
for all tuples ${\bf c} := (c_1, \dots, c_n)^T \in \R^n$,
\[ W {\bf c} = \sum_{i=1}^n c_i w_i \in \img A. \]

\noindent Consider the set $S$ of all ${\bf c} \in \R^n$ such that $W
{\bf c}$ has a zero entry. Then $S = \bigcup_{i=1}^n S_i$, where ${\bf c}
\in S_i$ if $e_i^T W {\bf c} = 0$. Note that $S_i$ is a proper subspace
of $\R^n$ since $e_i \notin S_i$ by assumption on $w_i$. Since $\R$ is an
infinite field, $S$ is a proper subset of $\R^n$, which proves the
observation.

Now given an irreducible matrix $A \in \bp_n^k(I)$, choose a vector
$\zeta \in \img A$ as in the above observation. Let $A_\epsilon := A +
\epsilon \zeta \zeta^T$ for $\epsilon > 0$; then $A_\epsilon \in
\bp_n^k(I)$ since $\zeta \in \img A$. Therefore by monotonicity,
$\frac{f[A_\epsilon] - f[A]}{\epsilon} \geq 0$.
Letting $\epsilon \rightarrow 0^+$, it follows that 
$f'[A] \circ (\zeta \zeta^T) \geq 0$. Now let $\zeta^{\circ (-1)} :=
(\zeta_1^{-1}, \dots, \zeta_n^{-1})^T$; then by the Schur Product
Theorem, it follows that
$\displaystyle f'[A] = f'[A] \circ (\zeta \zeta^T) \circ (\zeta^{\circ
(-1)} (\zeta^{\circ (-1)})^T) \geq 0$,
which concludes the proof.
\end{proof}

With the above results in hand, we now completely classify the powers
preserving Loewner monotonicity, and also specify them when rank
constraints are imposed.

\begin{theorem}\label{Tmonotone}
Suppose $2 \leq k \leq n$ are integers. Then,
\begin{equation}
\mono(n,k) = \N \cup [n-1,\infty), \quad \mono^\phi(n,k) = 2\N \cup
[n-1,\infty), \quad \mono^\psi(n,k) = (-1+2\N) \cup [n-1, \infty).
\end{equation}

\noindent If instead $k=1$, then
\begin{equation}
\mono(n,1) = \mono^\phi(n,1) = \mono^\psi(n,1) = (0,\infty).
\end{equation}
\end{theorem}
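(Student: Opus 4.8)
The plan is to pin down each of the three sets by a pair of inclusions, leaning on three elementary reductions together with the positivity characterisation (Theorem~\ref{Tpositive}) and the monotonicity theorems of FitzGerald--Horn (Theorem~\ref{thm:HF_monotone}) and Hiai (Theorem~\ref{thm:Hiai_monotone}). The three reductions are: (i) \emph{restriction}: padding by a zero block preserves rank, and $\bp_n^2(I)\subset\bp_n^k(I)$ for $k\ge 2$, so Loewner monotonicity on $\bp_n^k$ forces it on $\bp_m^k$ for every $m\le n$ and on $\bp_n^2$; (ii) \emph{monotone $\Rightarrow$ positive}: taking $B=0$ and using $f_\alpha(0)=\phi_\alpha(0)=\psi_\alpha(0)=0$ gives $\mono(n,k)\cap(0,\infty)\subset\pos(n,k)$, and likewise for $\phi,\psi$; (iii) \emph{sign reduction}: on nonnegative matrices $\phi_\alpha$ and $\psi_\alpha$ coincide with $f_\alpha$, so $\mono^\phi(n,k),\mono^\psi(n,k)\subset\mono(n,k)$. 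For the reverse ($\supset$) inclusions, integer powers are handled once and for all by the telescoping identity $A^{\circ m}-B^{\circ m}=\sum_{j=0}^{m-1}A^{\circ j}\circ(A-B)\circ B^{\circ(m-1-j)}$, each summand being a Schur product of positive semidefinite matrices; this gives $\N\subset\mono(n,k)$, and since $\phi_{2m}=f_{2m}$ and $\psi_{2m-1}=f_{2m-1}$ on all of $\R$, also $2\N\subset\mono^\phi(n,k)$ and $-1+2\N\subset\mono^\psi(n,k)$. The tail $[n-1,\infty)$ lies in all three sets by Theorems~\ref{thm:HF_monotone} and~\ref{thm:Hiai_monotone}, inherited from the unconstrained case $k=n$ via $\bp_n^k\subset\bp_n$.

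Next I treat the ordinary powers $f_\alpha$. By (i) it suffices to argue in $\bp_n^2$. The FitzGerald--Horn pair $A=((1+\epsilon ij))$, $B=\mathbf{1}_{n\times n}$ from Theorem~\ref{thm:HF_monotone} has rank at most $2$, hence already lies in $\bp_n^2([0,\infty))$ and excludes every non-integer $\alpha\in(0,n-1)$. Negative powers fail on rank-one matrices: for $A=uu^T$ with positive entries and $B=c^2A$ with $0<|c|<1$ one computes $f_\alpha[A]-f_\alpha[B]=(1-c^{2\alpha})u^{\circ\alpha}(u^{\circ\alpha})^T$ with $1-c^{2\alpha}<0$; the value $\alpha=0$ fails via a support-changing pair such as $\left(\begin{smallmatrix}2&1\\1&1\end{smallmatrix}\right)\ge\left(\begin{smallmatrix}1&0\\0&0\end{smallmatrix}\right)$. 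Combined with the $\supset$ inclusions this yields $\mono(n,k)=\N\cup[n-1,\infty)$. The case $k=1$ is separate and easy: for rank-one $A\ge B\ge 0$ one necessarily has $B=c^2A$ with $|c|\le 1$, so by multiplicativity of all three power functions $f[A]-f[B]=(1-|c|^{2\alpha})ww^T$ for an appropriate vector $w$, and this is positive semidefinite exactly when $\alpha>0$, giving $\mono(n,1)=\mono^\phi(n,1)=\mono^\psi(n,1)=(0,\infty)$.

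The substance is the \emph{parity obstruction} for $\phi_\alpha,\psi_\alpha$ on $\bp_n^k$ with $2\le k\le n$. By (iii) both sets sit inside $\N\cup[n-1,\infty)$, so only the integers of $\N\cap(0,n-1)$ remain, and I must exclude the wrong-parity ones (odd for $\phi$, even for $\psi$). Here Proposition~\ref{prop:mono_implies_pos_rank2} is the key tool, via the identities $\phi_\alpha'=\alpha\psi_{\alpha-1}$ and $\psi_\alpha'=\alpha\phi_{\alpha-1}$. For an odd integer $\alpha$ with $3\le\alpha\le n-2$: were $\phi_\alpha$ monotone on $\bp_n^k$, then by (i) it would be monotone on $\bp_{\alpha+2}^k$, and Proposition~\ref{prop:mono_implies_pos_rank2} applied to the irreducible rank-two matrix $A_{\alpha+2}$ of~\eqref{Erank2} would force $\psi_{\alpha-1}[A_{\alpha+2}]\in\bp_{\alpha+2}$; but $\alpha-1$ is even and claim~\eqref{eqn:claim} (taken with even integer $\alpha-1$, whose associated matrix is $A_{(\alpha-1)+3}=A_{\alpha+2}$) says $\psi_{\alpha-1}[A_{\alpha+2}]\notin\bp_{\alpha+2}$ --- a contradiction. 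The mirror argument for an even integer $\alpha$ with $2\le\alpha\le n-2$ uses $\psi_\alpha'=\alpha\phi_{\alpha-1}$ with the odd integer $\alpha-1$, invoking instead the Bhatia--Elsner statement (Theorem~\ref{thm:bhatia_elsner}, as in Case~2 of the proof of Theorem~\ref{Tpositive}) that $\phi_{\alpha-1}[A_{\alpha+2}]\notin\bp_{\alpha+2}$. These exclusions, with the $\supset$ inclusions, deliver $\mono^\phi(n,k)=2\N\cup[n-1,\infty)$ and $\mono^\psi(n,k)=(-1+2\N)\cup[n-1,\infty)$.

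The one genuinely delicate point --- and the main obstacle --- is the boundary power $\alpha=1$ for $\phi$ (the odd integer $1$ is \emph{included} for $\psi$, since $\psi_1(x)=x$, so no difficulty arises there). When $n\ge3$ the set $2\N\cup[n-1,\infty)$ omits $\alpha=1$, yet $\phi_1=|\cdot|$ is not differentiable at $0$, so Proposition~\ref{prop:mono_implies_pos_rank2} does not apply verbatim. For $n\ge4$ this is harmless: $1\notin\pos^\phi(4,k)=2\N\cup[2,\infty)$, so by (ii) $1\notin\mono^\phi(4,k)$, and (i) propagates this upward to all $n\ge4$. Only $n=3$ is truly special, because there $\phi_1$ \emph{does} preserve positivity, so $B=0$ is useless. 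I would handle it by running the derivative argument of Proposition~\ref{prop:mono_implies_pos_rank2} at the base point $A_3$ of~\eqref{Erank2}, all of whose entries $\pm\tfrac12,1$ are nonzero: since $\phi_1$ is differentiable at each entry of $A_3$, the quotient $\epsilon^{-1}(\phi_1[A_3+\epsilon\zeta\zeta^T]-\phi_1[A_3])$ still converges, for $\zeta\in\img A_3$ with no vanishing component, to $\sign[A_3]\circ\zeta\zeta^T$, so monotonicity would force $\sign[A_3]\in\bp_3$; but $\sign[A_3]$ has $(1,-1,1)$ as an eigenvector of eigenvalue $-1$. Equivalently, $A_3+\epsilon\zeta\zeta^T\ge A_3\ge 0$ is an explicit rank-two monotonicity counterexample for small $\epsilon>0$. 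Checking that this pointwise-differentiability refinement of Proposition~\ref{prop:mono_implies_pos_rank2} is valid, and verifying the low-dimensional edge cases ($n=2,3$) against the closed forms, is where the care lies.
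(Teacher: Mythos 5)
Your proof is correct and, in its overall architecture, matches the paper's: the same sandwich $\mono^\phi(n,k)\subset\pos^\phi(n,k)\cap\mono(n,k)$ (and its $\psi$-analogue), the same telescoping Schur-product identity for integer powers, the same FitzGerald--Horn pair $((1+\epsilon ij))\geq\mathbf{1}_{n\times n}$ of rank at most two for non-integers in $(0,n-1)$, and the same parity obstruction via Proposition \ref{prop:mono_implies_pos_rank2} combined with $\phi_\alpha'=\alpha\psi_{\alpha-1}$, $\psi_\alpha'=\alpha\phi_{\alpha-1}$ and the non-positivity of $\psi_{n-3}[A_n]$ (resp.\ $\phi_{n-3}[A_n]$). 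The one genuine divergence is the boundary case $\phi_1$ on $\bp_3^k(\R)$, which you rightly single out: since $|x|$ is not differentiable at $0$, Proposition \ref{prop:mono_implies_pos_rank2} does not apply verbatim. The paper disposes of this with an explicit three-parameter family $B=\mathbf{1}_{3\times 3}$, $A=B+vv^T$, $v=(a,b,-c)^T$, for which $\det(\phi_1[A]-\phi_1[B])=-4b^2(ac-1)^2<0$. You instead rerun the difference-quotient argument at the base point $A_3$, all of whose entries are nonzero, so that for small $\epsilon$ the quotient equals $\sign[A_3]\circ\zeta\zeta^T$ exactly and monotonicity would force $\sign[A_3]\in\bp_3$, contradicted by the vector $(1,-1,1)^T$. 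That refinement is valid (no limit is even needed) and has the merit of unifying the $n=3$ case with the general parity mechanism, whereas the paper's version is a bare-hands counterexample. Two minor remarks: your parity argument excludes every odd integer in $[3,n-2]$, though after intersecting with $\pos^\phi(n,k)$ only $\alpha=n-2$ actually requires excluding; and at $\alpha=0$, $k=1$, your own rank-one identity gives $f_0[A]-f_0[B]=0$ when $B=c^2A$ with $c\neq 0$ (and $f_0[A]\geq 0$ when $c=0$), so $0$ is in fact monotone on $\bp_n^1$ --- consistent with Table \ref{H-list}, which lists $[0,\infty)$; the open interval in the theorem statement, which your ``exactly when $\alpha>0$'' reproduces, appears to be a slip in the paper itself.
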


\begin{proof}
First suppose $k=1<n$ and $A = uu^T, B = vv^T \in \bp_n^1$. If $A \geq B
\geq 0$, then we claim that $v = c u$ for some $c \in [-1,1]$. To see the
claim, assume to the contrary that $u, v$ are linearly independent. We
can then choose $w \in \R^n$ such that $w$ is orthogonal to $u$ but not
to $v$. But then $w^T (A-B) w = - (w^T v)^2 < 0$,
which contradicts the assumption $A \geq B$. Thus $u, v$ are
linearly dependent. Since $A \geq B \geq 0$, it follows that $v = cu$
with $|c| \leq 1$. Now for all $\alpha \geq 0$ and all $A,B \in
\bp_n^1([0,\infty))$ such that $A \geq B \geq 0$, we use the
multiplicativity of $f_\alpha$ to compute:
\[ f_\alpha[A] - f_\alpha[B] = f_\alpha[u] f_\alpha[u]^T -
f_\alpha[cu] f_\alpha[cu]^T = (1 - (c^2)^\alpha)
f_\alpha[u] f_\alpha[u]^T \geq 0. \]

\noindent Thus $f_\alpha$ is monotone on $\bp_n^1([0,\infty))$. Similar
computations show the monotonicity of $\phi_\alpha$ and $\psi_\alpha$ on
$\bp_n^1(\R)$ for all $\alpha \geq 0$. The same computations also show
that $f_\alpha, \phi_\alpha, \psi_\alpha$ are not monotone on
$\bp_n^1(I)$, for any $\alpha < 0$.

Now suppose $2 \leq k \leq n$. Note that if $A \geq B \geq 0$, then one
inductively shows using the Schur product theorem that
\begin{equation}\label{Emonotone}
A^{\circ m} \geq B^{\circ m}\ \forall m \leq N \quad \implies \quad
A^{\circ (N+1)} - B^{\circ (N+1)} = \sum_{m=0}^N A^{\circ m} \circ (A-B)
\circ B^{\circ (N-m)} \geq 0.
\end{equation}

\noindent Therefore every positive integer Hadamard power is monotone on
$\bp_n$. We now consider three cases corresponding to the three functions
$f(x) = f_\alpha(x), \phi_\alpha(x)$, and $\psi_\alpha(x)$. \medskip

\noindent {\bf Case 1: $f(x) = f_\alpha(x)$.}
First suppose $\alpha < 1$. By considering the matrices $B = {\bf 1}_{2
\times 2}$ and $A = B + u u^T$ with $u = (1,-1)^T$, we immediately obtain
that $f_\alpha$ is not monotone on $\bp_2^2([0,\infty))$, and hence not
on $\bp_n^k([0,\infty))$ for all $\alpha < 1$. Now Theorem
\ref{thm:HF_monotone} and the above analysis implies that $\mono(n,k)
\subset \N \cup [n-1,\infty)$, since $((1+\epsilon ij)), {\bf 1}_{n
\times n} \in \bp_n^2([0,\infty)) \subset \bp_n^k([0, \infty))$ provide
the necessary counterexample for $\alpha \in (0,n-1) \setminus \N$.
Furthermore by Theorem \ref{thm:HF_monotone}, $\N \cup [n-1,\infty) =
\mono(n,n) \subset \mono(n,k)$, and thus $\mono(n,k) = \N \cup
[n-1,\infty)$.\medskip

\noindent {\bf Case 2: $f(x) = \phi_\alpha(x)$.}
By Equation \eqref{Emonotone}, $\phi_{2n}[A] = A^{\circ 2n}$ preserves
monotonicity on $\bp_n$. From this observation and Theorem
\ref{thm:Hiai_monotone}, it follows that $2\N \cup [n-1,\infty) \subset
\mono^\phi(n,n) \subset \mono^\phi(n,k)$. We now claim that 
\[ \mono^\phi(n,k) \subset \pos^\phi(n,k) \cap \mono(n,k) \subset \{ n-2
\} \cup 2\N \cup [n-1,\infty). \]

\noindent Indeed, the first inclusion above follows because every
monotone function on $\bp_n^k(\R)$ is simultaneously monotone on
$\bp_n^k([0, \infty))$ and positive on $\bp_n^k(\R)$ by definition. The
second inclusion above holds by Theorem \ref{Tpositive} and Case 1.

It thus remains to consider if $\phi_{n-2}$ is monotone on $\bp_n^k(\R)$.
We consider three sub-cases: if $n>2$ is even, then $n-2 \in 2\N \cup
[n-1,\infty)$. Hence $\mono^\phi(n,k) = 2\N \cup [n-1,\infty)$ by the
analysis mentioned above in Case 2.
Next if $n=3$, we produce a three-parameter family of matrices $A
\geq {\bf 1}_{3 \times 3} \geq 0$ in $\bp_3(\R)$ such that $\phi_1[A]
\not\geq \phi_1[{\bf 1}_{3 \times 3}]$. Indeed, choose any $a>b>0$ and $c
\in (a^{-1}, b^{-1})$, and define
\[ v := (a,b,-c)^T, \qquad B := {\bf 1}_{3 \times 3}, \qquad A := B
+ vv^T. \]

\noindent Then both $A,B$ are in $\bp_3^2(\R)$, and $\phi_1[A], \phi_1[B]
\in \bp_3(\R)$ by Theorem \ref{Tpositive}. However,
\[ \det (\phi_1[A] - \phi_1[B]) = \det \begin{pmatrix}
a^2 & ab & ac-2\\
ab & b^2 & -bc\\
ac-2 & -bc & c^2
\end{pmatrix} = - 4b^2 (ac-1)^2 < 0. \]

\noindent Thus $\phi_1$ is not monotone on $\bp_3^2(\R)$. 

Finally, suppose $n>3$ is odd and that $\phi_{n-2}$ is monotone on
$\bp_n^k(\R)$. We then obtain a contradiction as follows: recall from
Equation \eqref{eqn:claim} that the matrix $A_n$ constructed in Equation
\eqref{Erank2} satisfies: $\psi_{n-3}[A_n] \notin \bp_n$. (Here, $\alpha
= n-3 = p$ is an even integer in $(0,n-2)$, since $n>3$ is odd.)
Moreover, $A_n \in \bp_n^2(\R) \subset \bp_n^k(\R)$ is irreducible.
Hence if $\phi_{n-2}$ is monotone on $\bp_n^k(\R)$, then by Proposition
\ref{prop:mono_implies_pos_rank2}, $\psi_{n-3}[A_n] = \frac{1}{n-2}
(\phi_{n-2})'[A_n] \in \bp_n$. This is a contradiction and so
$\phi_{n-2}$ is not monotone for odd integers $n>3$. This concludes the
classification of the powers $\phi_\alpha$ that preserve Loewner
monotonicity.\medskip

\noindent {\bf Case 3: $f(x) = \psi_\alpha(x)$.}
This case follows similarly to Case 2, and is included for completeness.
By Equation \eqref{Emonotone}, $\psi_{2n+1}[A] = A^{\circ (2n+1)}$
preserves monotonicity on $\bp_n$. From this observation and Theorem
\ref{thm:Hiai_monotone}, it follows that $(-1+2\N) \cup [n-1,\infty)
\subset \mono^\psi(n,n) \subset \mono^\psi(n,k)$. We now claim that 
\[ \mono^\psi(n,k) \subset \pos^\psi(n,k) \cap \mono(n,k) \subset \{ n-2
\} \cup (-1+2\N) \cup [n-1,\infty). \]

\noindent Indeed, the first inclusion above follows because every
monotone function on $\bp_n^k(\R)$ is simultaneously monotone on
$\bp_n^k([0, \infty))$ and positive on $\bp_n^k(\R)$ by definition. The
second inclusion above holds by Theorem \ref{Tpositive} and Case 1.

It thus remains to consider if $\psi_{n-2}$ is monotone on $\bp_n^k(\R)$.
We consider two sub-cases: if $n>2$ is odd, then $n-2 \in (-1+2\N) \cup
[n-1,\infty)$. Hence $\mono^\psi(n,k) = (-1+2\N) \cup [n-1,\infty)$ by
the analysis mentioned above in Case 3. Finally, if $n>2$ is even, then
an argument similar to that for $\phi_{n-2}$ above (together with the
analogous example in Theorem \ref{Tpositive} for $\psi_{n-2}' =
(n-2) \phi_{n-3}$) shows that $n-2 \notin \mono^\psi(n,2)$. In
particular, $n-2 \notin \mono^\psi(n,k)$ for all $2 \leq k \leq n$,
concluding the proof.
\end{proof}

\section{Characterizing entrywise powers that are Loewner convex}

We next characterize the entrywise powers that preserve Loewner
convexity. Before proving the main result of this section, we need a few
preliminary results. Recall that an $n \times n$ matrix $A$ is said to be
\emph{completely positive} if $A =  C C^T$ for some $n \times m$ matrix
$C$ with nonnegative entries. We denote the set of $n \times n$
completely positive matrices by $\cp_n$. 

\begin{lemma}\label{lem:HiaiCP}
Suppose $I \subset \R$ is convex, $n \geq 2$, and $f: I \to \R$ is
continuously differentiable. Given two fixed matrices $A, B \in \bp_n(I)$
such that 
\begin{enumerate}
\item $A - B \in \cp_n$; 
\item $f[\lambda A + (1-\lambda)B] \leq \lambda f[A] + (1-\lambda) f[B]$
for all $0 \leq \lambda \leq 1$.
\end{enumerate}
Then $f'[A] \geq f'[B]$. 
\end{lemma}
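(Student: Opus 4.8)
The plan is to reduce the matrix convexity hypothesis~(2) to a family of scalar inequalities, extract one-sided derivative information at the two endpoints of the segment $[B,A]$, and then use the completely positive factorization from hypothesis~(1) to strip off a Hadamard product and arrive at the Loewner inequality $f'[A]\geq f'[B]$.

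First I would fix an arbitrary $v\in\R^n$ and set
\[ p_v(\lambda):=v^T\bigl(\lambda f[A]+(1-\lambda)f[B]-f[\lambda A+(1-\lambda)B]\bigr)v,\qquad \lambda\in[0,1]. \]
By hypothesis~(2), $p_v\geq 0$ on $[0,1]$, and since the chord meets the curve at the endpoints we have $p_v(0)=p_v(1)=0$. As $f$ is $C^1$, $p_v$ is $C^1$, so the sign of $p_v$ forces the one-sided derivatives $p_v'(0)\geq 0$ and $p_v'(1)\leq 0$. Computing the entrywise derivative $\frac{d}{d\lambda}f[\lambda A+(1-\lambda)B]=f'[\lambda A+(1-\lambda)B]\circ(A-B)$ gives
\[ p_v'(\lambda)=v^T\bigl(f[A]-f[B]-f'[\lambda A+(1-\lambda)B]\circ(A-B)\bigr)v. \]
Letting $v$ vary, the two endpoint sign conditions become the matrix sandwich
\[ f'[B]\circ(A-B)\ \leq\ f[A]-f[B]\ \leq\ f'[A]\circ(A-B), \]
and subtracting the outer terms yields the key intermediate inequality $(f'[A]-f'[B])\circ(A-B)\geq 0$, which so far uses only hypothesis~(2).

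Next I would invoke hypothesis~(1). Writing $A-B=CC^T=\sum_k c_kc_k^T$ with each column $c_k\geq 0$, and using the identity $v^T(M\circ c_kc_k^T)v=(v\circ c_k)^TM(v\circ c_k)$, the intermediate inequality becomes $\sum_k (v\circ c_k)^T(f'[A]-f'[B])(v\circ c_k)\geq 0$ for every $v$. In the model case where $A-B=cc^T$ is rank one with $c$ having no zero entry, I would set $v:=\xi\circ c^{\circ(-1)}$ for an arbitrary $\xi$ and read off $\xi^T(f'[A]-f'[B])\xi\geq 0$, i.e.\ $f'[A]\geq f'[B]$; this amounts to a congruence by $\diag(c)$, and is exactly where complete positivity is needed rather than the weaker $A\geq B$.

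The hard part will be removing the Hadamard factor when $A-B$ has rank $>1$ or when $CC^T$ has zero entries: then $\diag(c_k)$ is non-invertible, and one cannot split the positive semidefinite sum termwise — indeed the subtracted inequality $(f'[A]-f'[B])\circ(A-B)\geq 0$ alone is genuinely insufficient in higher rank, so the argument must retain the full two-sided sandwich (i.e.\ the actual value $f[A]-f[B]=\int f'\,$) and not merely its difference. The structural facts I would exploit to close the gap are: (a) $f'[A]-f'[B]$ vanishes on every entry where $A-B$ does, since $A_{ij}=B_{ij}$ forces $f'(A_{ij})=f'(B_{ij})$, so $\supp(f'[A]-f'[B])\subseteq\supp(A-B)$; and (b) complete positivity supplies the zero-pattern combinatorics, as $(CC^T)_{ij}=0$ with $C\geq 0$ means rows $i,j$ of $C$ have disjoint support. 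Combining these to localize to blocks on which $A-B$ is strictly positive, and then passing to the rank-one congruence by perturbing $C$ to a strictly positive factor and letting the perturbation tend to $0$, is the delicate step I expect to require the most care.
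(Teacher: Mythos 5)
Your reduction to the scalar functions $p_v$ and the resulting endpoint sandwich
\[
f'[B]\circ(A-B)\ \leq\ f[A]-f[B]\ \leq\ f'[A]\circ(A-B)
\]
is correct, and your rank-one congruence (Schur-multiplying by $(c^{\circ(-1)})(c^{\circ(-1)})^T$) is exactly the right mechanism in the model case. But the step you yourself flag as ``the hard part'' is a genuine gap, and the devices you propose to close it do not work. The inequality $(f'[A]-f'[B])\circ(A-B)\geq 0$ cannot be inverted when $A-B$ has rank $\geq 2$, even if $A-B$ is entrywise strictly positive (for instance $M\circ P\geq 0$ with $P=\bigl(\begin{smallmatrix}1&1/2\\1/2&1\end{smallmatrix}\bigr)$ does not force $M\geq 0$), so localizing to the support of $A-B$ does not reduce you to a tractable case. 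Retaining ``the full two-sided sandwich'' does not help either: you never indicate how the middle term $f[A]-f[B]$ would be used, and no direct use of it recovers $f'[A]\geq f'[B]$. Finally, ``perturbing $C$ to a strictly positive factor'' changes $A-B$, hence changes $A$, and hypothesis (2) is assumed only for the fixed pair $(A,B)$; the perturbed pair need not satisfy it.

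The idea you are missing is the paper's telescoping chain. Write $A-B=v_1v_1^T+\cdots+v_mv_m^T$ with $v_i\in[0,\infty)^n$ (this is where $\cp_n$ enters) and set $A_k:=B+v_{k+1}v_{k+1}^T+\cdots+v_mv_m^T$, so that $A=A_0\geq A_1\geq\cdots\geq A_m=B$ and each consecutive difference $A_{k-1}-A_k=v_kv_k^T$ is rank one with nonnegative entries. Nonnegativity of the $v_i$ guarantees that every $A_k$ lies entrywise between $B$ and $A$, hence in $\bp_n(I)$ since $I$ is convex --- this is the second place complete positivity is used, and it is absent from your outline. One then runs your endpoint-derivative argument on each sub-segment $[A_k,A_{k-1}]$ (following Hiai) to get $f'[A_{k-1}]\geq f'[A_k]$, where the rank-one Hadamard factor $v_kv_k^T$ can be divided out by your congruence trick (zero entries of $v_k$ are harmless because, as you correctly observe, $f'[A_{k-1}]$ and $f'[A_k]$ agree on the complementary entries), and the conclusion follows by chaining the $m$ inequalities. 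In short: the correct resolution is not to divide by the full matrix $A-B$ at all, but to never face a Hadamard factor of rank larger than one.
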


\begin{proof}
Since $A - B \in \cp_n$, there exist vectors $v_1, \dots, v_m \in [0,
\infty)^n$ such that 
\begin{equation}
A - B = v_1 v_1^T + \dots + v_m v_m^T. 
\end{equation}
For $1 \leq k \leq m$, let $A_k = B + v_{k+1} v_{k+1}^T + \dots + v_m
v_m^T$. Then $A =: A_0 \geq A_1 \geq \dots \geq A_{m-1} \geq A_m := B$. 
The rest of the proof is the same as the first part of the proof of
\cite[Theorem 3.2(1)]{Hiai2009}.
\end{proof}

Just as Proposition \ref{prop:mono_implies_pos_rank2} was used in proving
Theorem \ref{Tmonotone}, we need the following preliminary result to
classify the powers that preserve convexity.

\begin{proposition}\label{prop:conv_implies_pos_rank2}
Fix $0 < R \leq \infty$, $I = (-R, R)$, and $2 \leq k \leq n$.
Suppose $f: I \to \R$ is twice differentiable on $I$ and Loewner convex
on $\bp_n^k(I)$. If $A \in \bp_n^k(I)$ is irreducible, then $f''[A] \in
\bp_n$.
\end{proposition}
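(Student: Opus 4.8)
The plan is to mimic the strategy used for Proposition \ref{prop:mono_implies_pos_rank2}, but now extracting a second-derivative condition from Loewner convexity rather than a first-derivative condition from monotonicity. The key geometric input is the same observation proved in Proposition \ref{prop:mono_implies_pos_rank2}: since $A \in \bp_n^k(I)$ is irreducible, there exists a vector $\zeta \in \img A$ with no zero component. I would reuse this directly, as it is already established for irreducible symmetric matrices over any infinite field.

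First I would set up a rank-one perturbation of $A$ inside $\bp_n^k(I)$ along the direction $\zeta$. The natural choice is to take three collinear matrices: for small $\epsilon > 0$, consider $B := A - \epsilon \zeta \zeta^T$, the midpoint $A$, and $A_+ := A + \epsilon \zeta \zeta^T$, so that $A = \tfrac12 B + \tfrac12 A_+$. Because $\zeta \in \img A$, all three matrices lie in $\bp_n^k(I)$ for $\epsilon$ sufficiently small (the rank does not increase, the entries stay in $I$ by openness of $I$, and positive semidefiniteness is preserved for small $\epsilon$ since $\zeta \in \img A$ means the perturbation stays within the column space). Loewner convexity applied with $\lambda = \tfrac12$ then gives
\[
f[A] = f\!\left[ \tfrac12 B + \tfrac12 A_+ \right] \leq \tfrac12 f[B] + \tfrac12 f[A_+],
\]
which rearranges to the discrete second-difference inequality $f[A_+] - 2 f[A] + f[B] \geq 0$ in the Loewner order.

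The main step is then to divide by $\epsilon^2$ and take $\epsilon \to 0^+$. Entrywise, the $(i,j)$ entry of the left-hand side is $f(a_{ij} + \epsilon \zeta_i \zeta_j) - 2 f(a_{ij}) + f(a_{ij} - \epsilon \zeta_i \zeta_j)$, and since $f$ is twice differentiable, dividing by $\epsilon^2$ and letting $\epsilon \to 0^+$ yields $f''(a_{ij}) \zeta_i^2 \zeta_j^2$ by the standard symmetric second-difference limit. Positive semidefiniteness is preserved under entrywise limits, so we obtain $f''[A] \circ (\zeta \zeta^T) \circ (\zeta \zeta^T) \geq 0$, i.e.\ $f''[A] \circ (\zeta^{\circ 2} (\zeta^{\circ 2})^T) \geq 0$. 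Finally, exactly as in Proposition \ref{prop:mono_implies_pos_rank2}, I would strip off the Hadamard factor using the Schur product theorem: letting $\eta := (\zeta_1^{-2}, \dots, \zeta_n^{-2})^T$ (well-defined since every $\zeta_i \neq 0$), we get
\[
f''[A] = \bigl( f''[A] \circ (\zeta^{\circ 2} (\zeta^{\circ 2})^T) \bigr) \circ (\eta \eta^T) \geq 0,
\]
concluding the proof.

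The part requiring the most care is the passage to the limit: I must justify both that the entrywise second-difference quotient converges to $f''(a_{ij}) \zeta_i^2 \zeta_j^2$ (routine, using twice differentiability) and that the Loewner inequality survives the limit (immediate, since $\bp_n$ is a closed cone). A minor point worth checking is that $\epsilon$ can be chosen uniformly small so that all perturbed entries $a_{ij} \pm \epsilon \zeta_i \zeta_j$ remain in the open interval $I$; this holds because there are finitely many entries and $I$ is open. I do not anticipate any genuine obstacle here, since the argument is the natural second-order analogue of the monotonicity proposition and the irreducibility-to-nonvanishing-vector observation has already done the essential work.
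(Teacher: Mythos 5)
Your proof is correct, and while it shares the paper's skeleton --- perturb $A$ along $\zeta \zeta^T$ for a nowhere-zero $\zeta \in \img A$ (the observation you correctly reuse from the monotonicity proposition), pass to a limit to obtain $f''[A]$ Hadamard-multiplied by a rank-one positive matrix with nonzero entries, then strip that factor off with the Schur product theorem --- the analytic middle is genuinely different. The paper follows Hiai's two-stage route: it first uses convexity along the segment from $A$ to $A + \zeta\zeta^T$, via two one-sided difference quotients whose sum telescopes, to prove the intermediate statement $f'[A + \epsilon \zeta\zeta^T] \geq f'[A]$, and only then differentiates this monotonicity a second time to reach $f''[A] \circ (\zeta\zeta^T) \geq 0$. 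You instead apply midpoint convexity to the collinear triple $A \pm \epsilon\zeta\zeta^T$, $A$, and divide the resulting Loewner second difference by $\epsilon^2$, reaching $f''[A] \circ (\zeta^{\circ 2}(\zeta^{\circ 2})^T) \geq 0$ in a single limit; this is shorter and avoids any claim about $f'$, and the symmetric second-difference limit is valid under mere twice-differentiability by Taylor's theorem with Peano remainder. The price of your route is that you must verify $A - \epsilon\zeta\zeta^T \geq 0$ for small $\epsilon$ (the paper only ever adds multiples of $\zeta\zeta^T$); your justification via $\zeta \in \img A$ is sound, since $A$ is positive definite on its column space and $\zeta$ is orthogonal to $\ker A$. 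The only point I would make explicit is that the pair $(A + \epsilon\zeta\zeta^T,\, A - \epsilon\zeta\zeta^T)$ is Loewner-ordered (their difference is $2\epsilon\zeta\zeta^T \geq 0$), since the paper's definition of Loewner convexity quantifies only over pairs with $A \geq B \geq 0$.
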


\begin{proof}
Given an irreducible matrix $A \in \bp_n^k(I)$, choose a vector $\zeta
\in \img A$ as in the observation at the beginning of the proof of
Proposition \ref{prop:mono_implies_pos_rank2}.
We now adapt the proof of \cite[Theorem 3.2(1)]{Hiai2009} for the $k=n$
case, to the $2 \leq k < n$ case. Let $A_1 := A + \zeta \zeta^T$; then
$A_1 \in \bp_n^k(I)$ for $\| \zeta \|$ small enough since $\zeta \in \img
A$. More generally, it easily follows that $\lambda A_1 + (1-\lambda) A
\in \bp_n^k(I)$ for all $\lambda \in [0,1]$. Since
\[ f[\lambda A_1 + (1-\lambda) A] \leq \lambda f[A_1] + (1-\lambda) f[A]
\qquad \forall 0 \leq \lambda \leq 1 \]
by convexity, it follows for $0 < \lambda < 1$ that
\begin{align*}
\frac{f[A + \lambda (A_1-A)]-f[A]}{\lambda} &\leq f[A_1] - f[A], \notag\\
\frac{f[A_1 + (1-\lambda)(A-A_1)] - f[A_1]}{1-\lambda} & \leq f[A] -
f[A_1]. 
\end{align*}

\noindent Letting $\lambda \rightarrow 0^+$ or $\lambda \rightarrow 1^-$,
we obtain
\[ (A_1-A) \circ f'[A] \leq f[A_1] - f[A], \qquad
(A-A_1) \circ f'[A_1] \leq f[A] - f[A_1]. \]

\noindent Summing these two inequalities gives $(A_1-A) \circ (f'[A_1] -
f'[A]) \geq 0$. Note that $(A_1 - A)^{\circ -1} = (\zeta \zeta^T)^{\circ
-1} \in \bp_n^1$ and so $f'[A_1] - f'[A] \geq 0$.

Finally, given $\epsilon > 0$, define $A_\epsilon := A + \epsilon \zeta
\zeta^T$. Then $A_\epsilon \in \bp_n^k(I)$ and $f'[A_\epsilon] \geq
f'[A]$ by the previous paragraph for $\sqrt{\epsilon} \zeta$. Therefore,
for all $\epsilon > 0$,
$\frac{f'[A_\epsilon] - f'[A]}{\epsilon} \geq 0$.
Letting $\epsilon \rightarrow 0^+$, it follows that 
$f''[A] \circ (\zeta \zeta^T) \geq 0$. Now let $\zeta^{\circ (-1)} :=
(\zeta_1^{-1}, \dots, \zeta_n^{-1})^T$; then by the Schur Product
Theorem,
\[ 0 \leq f''[A] \circ (\zeta \zeta^T) \circ (\zeta^{\circ (-1)}
(\zeta^{\circ (-1)})^T) = f''[A], \]

\noindent which concludes the proof.
\end{proof}

Note that Lemma \ref{lem:HiaiCP} and Proposition
\ref{prop:conv_implies_pos_rank2} generalize to the cone $\bp_n$ of
matrices with the Loewner ordering, the elementary results from real
analysis that the first and second derivatives of a convex (twice)
differentiable function are nondecreasing and nonnegative, respectively.
These parallels have been explored by Hiai in detail for $I = (-R,R)$;
see \cite[Theorems 3.2 and 5.1]{Hiai2009}. We now state some assertions
from {\it loc.~cit.}~that concern Loewner convexity.

\begin{theorem}[{Hiai, \cite[Theorems 3.2 and
5.1]{Hiai2009}}]\label{thm:Hiai_convex}
Suppose $0 < R \leq \infty$, $I = (-R,R)$, and $f : I \to \R$.
\begin{enumerate}
\item For each $n \geq 2$, $f$ is convex on $\bp_n(I)$ if and only
if $f$ is differentiable on $I$ and $f'$ is monotone on $\bp_n(I)$.
\item If $n \geq 1$ and $\alpha \geq n$, then $\alpha \in \conv^\phi(n,n)
\cap \conv^\psi(n,n)$.
\end{enumerate}
\end{theorem}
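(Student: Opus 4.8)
The plan is to read Theorem \ref{thm:Hiai_convex} as the Loewner analogue of the one-variable facts that a $C^1$ function is convex precisely when its derivative is nondecreasing, and that $x\mapsto x^\alpha$ is convex once $\alpha\ge 1$. Part (2) will be deduced from Part (1) together with the monotonicity result Theorem \ref{thm:Hiai_monotone}(2), so almost all the work lies in Part (1), whose two implications I treat separately.

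I would first dispatch the implication that differentiability of $f$ plus Loewner monotonicity of $f'$ forces Loewner convexity. Fix $A\ge B\ge 0$ in $\bp_n(I)$ and study the curve $g(\lambda):=f[\lambda A+(1-\lambda)B]$ for $\lambda\in[0,1]$. Each $\lambda A+(1-\lambda)B$ lies in $\bp_n(I)$ (a convex combination of PSD matrices, with entries in the interval $I$), and differentiating entrywise gives $g'(\lambda)=f'[\lambda A+(1-\lambda)B]\circ(A-B)$. For $\lambda_1\ge\lambda_2$ the matrices $\lambda_iA+(1-\lambda_i)B$ are Loewner-ordered and nonnegative, so monotonicity of $f'$ yields $f'[\lambda_1A+(1-\lambda_1)B]\ge f'[\lambda_2A+(1-\lambda_2)B]$; taking the Hadamard product with the PSD matrix $A-B$ and invoking the Schur product theorem shows $g'(\lambda_1)\ge g'(\lambda_2)$, i.e.\ $g'$ is Loewner-nondecreasing. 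Hence for each vector $x$ the scalar map $\lambda\mapsto x^Tg(\lambda)x$ has nondecreasing derivative $x^Tg'(\lambda)x$, so it is convex, giving $x^Tg(\lambda)x\le \lambda\,x^Tg(1)x+(1-\lambda)x^Tg(0)x$; as $x$ is arbitrary this is exactly $f[\lambda A+(1-\lambda)B]\le \lambda f[A]+(1-\lambda)f[B]$, the Loewner convexity of $f$.

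For the converse, suppose $f$ is Loewner convex on $\bp_n(I)$. Monotonicity of $f'$ is obtained by reproducing the rank-one step of Proposition \ref{prop:conv_implies_pos_rank2}: convexity yields the two one-sided inequalities $(A_1-A)\circ f'[A]\le f[A_1]-f[A]$ and $(A-A_1)\circ f'[A_1]\le f[A]-f[A_1]$, whose sum is $(A_1-A)\circ(f'[A_1]-f'[A])\ge 0$, and dividing entrywise by a rank-one $A_1-A=\zeta\zeta^T$ gives $f'[A_1]\ge f'[A]$. To pass from rank-one increments to arbitrary $A\ge B\ge 0$, I would write $A-B=\sum_i\zeta_i\zeta_i^T$ (spectral decomposition, so $A-B\in\cp_n$ after the usual reduction) and telescope through the intermediate matrices $M=B+\sum_{i\le j}\zeta_i\zeta_i^T$ exactly as in Lemma \ref{lem:HiaiCP}; each such $M$ satisfies $B\le M\le A$, hence $0\le M_{ii}\le A_{ii}<R$ and $|M_{ij}|\le\sqrt{M_{ii}M_{jj}}<R$, so $M\in\bp_n(I)$, which is precisely what keeps the telescoping inside the domain. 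Chaining the rank-one inequalities delivers $f'[A]\ge f'[B]$. Differentiability of $f$ itself comes from scalar convexity: testing convexity on the matrices $t\,{\bf 1}_{n\times n}$ for $t\ge 0$ forces $f$ to be convex on $[0,R)$, and suitable ordered $2\times 2$ configurations reach the negative part of $I$, after which the rank-one inequalities (which presuppose $f'$) pin the one-sided derivatives together. I expect this differentiability extraction — and ensuring the rank-one-to-general-PSD telescoping stays inside $\bp_n(I)$ — to be the main obstacle, which is exactly where the bound $|M_{ij}|\le\sqrt{M_{ii}M_{jj}}$ and the completely-positive decomposition of Lemma \ref{lem:HiaiCP} carry the argument.

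Finally, Part (2) follows by applying Part (1) to $\phi_\alpha$ and $\psi_\alpha$. A direct computation gives $\phi_\alpha'=\alpha\,\psi_{\alpha-1}$ and $\psi_\alpha'=\alpha\,\phi_{\alpha-1}$. When $\alpha\ge n$ with $n\ge 2$ we have $\alpha-1\ge n-1$ and $\alpha>1$, so both $\phi_\alpha$ and $\psi_\alpha$ are $C^1$ on $\R$ (differentiable at $0$ with vanishing derivative), and by Theorem \ref{thm:Hiai_monotone}(2) their derivatives $\alpha\psi_{\alpha-1}$ and $\alpha\phi_{\alpha-1}$ are Loewner monotone on $\bp_n(\R)$; Part (1) then gives Loewner convexity, i.e.\ $\alpha\in\conv^\phi(n,n)\cap\conv^\psi(n,n)$. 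The case $n=1$ is immediate, since $\bp_1(\R)=[0,\infty)$ and $x\mapsto x^\alpha$ is convex for $\alpha\ge 1$.
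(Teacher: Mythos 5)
The paper does not prove this statement at all: it is quoted verbatim from Hiai \cite[Theorems 3.2 and 5.1]{Hiai2009}, so there is no internal proof to compare against. Judged on its own terms, your argument for the ``if'' direction of Part (1) (differentiable $f$ with Loewner-monotone $f'$ implies Loewner convexity, via $g(\lambda)=f[\lambda A+(1-\lambda)B]$ and the Schur product theorem) is correct and is exactly the mechanism the paper itself invokes when it cites \cite[Proposition 2.4]{Hiai2009} in the proof of Theorem \ref{Tconvex}; likewise Part (2) follows from Part (1) plus Theorem \ref{thm:Hiai_monotone}(2) in the way you describe, which is also how the paper uses these results.

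The genuine gap is in the ``only if'' direction of Part (1). The hard content of Hiai's Theorem 3.2 is precisely that Loewner convexity on $\bp_n(I)$ for $n\geq 2$ \emph{forces} $f$ to be differentiable on all of $I$ --- this is why $\phi_1(x)=|x|$, which is scalar-convex, fails to be Loewner convex on $\bp_2(\R)$ (the paper leans on exactly this fact in Case 2 of the proof of Theorem \ref{Tconvex}). Your sketch does not establish this: testing on $t\,{\bf 1}_{n\times n}$ and on $2\times 2$ configurations only yields scalar convexity of $f$ on pieces of $I$, which gives one-sided derivatives but not their equality, and the ``rank-one inequalities'' you then invoke to ``pin the one-sided derivatives together'' are the inequalities $(A_1-A)\circ f'[A]\leq f[A_1]-f[A]$, which already presuppose that $f'$ exists --- so the argument is circular at the one point where the theorem has real content. (A secondary imprecision: the spectral decomposition $A-B=\sum_i\zeta_i\zeta_i^T$ neither places $A-B$ in $\cp_n$ nor guarantees that the $\zeta_i$ have no zero entries, so the entrywise division by $\zeta_i\zeta_i^T$ in your telescoping step needs a perturbation or density argument; your handling of why the intermediate matrices stay in $\bp_n(I)$, via $|M_{ij}|\leq\sqrt{M_{ii}M_{jj}}<R$, is fine.) Since the paper only ever uses the ``if'' direction of Part (1) and Part (2), the part of the theorem you actually prove is the part that matters downstream, but as a proof of the quoted statement the converse is incomplete.
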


With the above results in hand, we now extend them in order to completely
classify the powers preserving Loewner convexity, and also specify them
when rank constraints are imposed.

\begin{theorem}\label{Tconvex}
Suppose $2 \leq k \leq n$ are integers. Then,
\begin{equation}
\conv(n,k) = \N \cup [n,\infty), \quad \conv^\phi(n,k) = 2\N \cup
[n,\infty), \quad \conv^\psi(n,k) = (-1+2\N) \cup [n, \infty).
\end{equation}

\noindent If instead $k=1$, then
\begin{equation}
\conv(n,1) = \conv^\phi(n,1) = \conv^\psi(n,1) = [1,\infty).
\end{equation}
\end{theorem}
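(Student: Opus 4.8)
The plan is to mirror the structure of the positivity and monotonicity proofs (Theorems~\ref{Tpositive} and~\ref{Tmonotone}), exploiting the derivative-based characterization of convexity in Theorem~\ref{thm:Hiai_convex}(1) together with the rank-refined converse in Proposition~\ref{prop:conv_implies_pos_rank2}. First I would dispose of the $k=1$ case. For rank-one matrices $A = uu^T \geq B = vv^T \geq 0$ we have $v = cu$ with $|c|\leq 1$ (as established in the proof of Theorem~\ref{Tmonotone}), so the convexity inequality $f[\lambda A + (1-\lambda)B] \leq \lambda f[A] + (1-\lambda) f[B]$ reduces, by multiplicativity of $f_\alpha,\phi_\alpha,\psi_\alpha$, to a scalar convexity statement about the map $t \mapsto t^\alpha$ (or its even/odd extension) on the relevant interval. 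A direct computation should show this holds precisely for $\alpha \geq 1$ and fails for $\alpha < 1$, giving $[1,\infty)$ for all three functions; here the matrices $\lambda A + (1-\lambda)B$ remain rank one only in degenerate situations, so I would instead test convexity along the segment directly and reduce to the one-variable convexity of the scalar power.

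For $2 \leq k \leq n$, the containment $\N \cup [n,\infty) \subset \conv(n,k)$ follows by combining the fact that positive integer powers are convex (since they are super-additive / monotone-derivative, or via Theorem~\ref{thm:Hiai_convex}(1) applied inductively using that integer powers have monotone derivatives by Theorem~\ref{Tmonotone}) with Theorem~\ref{thm:Hiai_convex}(2) for $\alpha \geq n$; analogously $2\N \cup [n,\infty) \subset \conv^\phi$ and $(-1+2\N)\cup[n,\infty)\subset \conv^\psi$ using that $\phi_{2m}=f_{2m}$ and $\psi_{2m+1}=f_{2m+1}$ are even/odd integer powers. The substantive work is the reverse inclusion. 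Here I would invoke Theorem~\ref{thm:Hiai_convex}(1) to convert convexity into monotonicity of $f'$: if $f_\alpha$ is convex on $\bp_n^k$, then $f_\alpha' = \alpha f_{\alpha-1}$ is monotone on $\bp_n^k$, so by Theorem~\ref{Tmonotone} we need $\alpha - 1 \in \mono(n,k) = \N \cup [n-1,\infty)$, forcing $\alpha \in \N \cup [n,\infty)$. The parallel arguments for $\phi_\alpha$ (whose derivative is $\alpha\psi_{\alpha-1}$) and $\psi_\alpha$ (whose derivative is $\alpha\phi_{\alpha-1}$) use the mono$^\psi$ and mono$^\phi$ sets respectively, and the cross-over between the even and odd extensions under differentiation is exactly what produces the $2\N$ versus $-1+2\N$ patterns.

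The main obstacle I anticipate is twofold. First, Theorem~\ref{thm:Hiai_convex}(1) is stated for the full cone $\bp_n(I)$ rather than for $\bp_n^k(I)$, so I cannot apply it verbatim to extract monotonicity of $f'$ under a rank constraint; this is precisely why Proposition~\ref{prop:conv_implies_pos_rank2} was proved. I would therefore argue that convexity on $\bp_n^k$ implies, via Proposition~\ref{prop:conv_implies_pos_rank2}, that $f''[A] \geq 0$ for every irreducible $A \in \bp_n^k$, and then use the explicit low-rank counterexamples from Theorem~\ref{Tpositive}/\ref{Tmonotone} evaluated on $f''_\alpha = \alpha(\alpha-1) f_{\alpha-2}$ to rule out the forbidden exponents. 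The delicate boundary cases are the single candidate exponents (such as checking whether $\alpha = n-1$ can be convex, i.e. whether $f_{n-2}$ can be monotone, which Theorem~\ref{Tmonotone} already forbids for the appropriate parity), and verifying that the irreducible rank-two matrices $A_{n}$ from~\eqref{Erank2} furnish the needed counterexamples after one differentiation. Second, I must carefully track the parity bookkeeping so that the forbidden fractional and wrong-parity integer exponents in $(0,n)$ are excluded; the cleanest route is to reduce every convexity claim to an already-settled monotonicity claim for the once-differentiated function, so that no new counterexample matrices need to be constructed beyond those in the previous two sections.
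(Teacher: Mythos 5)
Your overall architecture (settle $k=1$ by reduction to scalar convexity, get the forward inclusions from integer powers plus Hiai's $\alpha\geq n$ result with the derivative cross-over $\phi_\alpha'=\alpha\psi_{\alpha-1}$, $\psi_\alpha'=\alpha\phi_{\alpha-1}$, and prove the reverse inclusions by differentiating) matches the paper, and your use of Proposition~\ref{prop:conv_implies_pos_rank2} on the irreducible rank-two matrices $A_{\alpha+1}$ from~\eqref{Erank2} is exactly how the paper excludes the wrong-parity integers $\alpha>1$ (resp.\ $\alpha>2$) from $\conv^\phi$ (resp.\ $\conv^\psi$). The gap is in how you exclude the \emph{non-integer} exponents and the two boundary integers. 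You correctly observe that Theorem~\ref{thm:Hiai_convex}(1) cannot be applied under a rank constraint, but your fallback --- pass to the second derivative via Proposition~\ref{prop:conv_implies_pos_rank2} and quote the positivity counterexamples for $f''_\alpha=\alpha(\alpha-1)f_{\alpha-2}$ --- does not cover $\alpha\in(1,2)$: there $\alpha-2\in(-1,0)$, and the only counterexample in Theorem~\ref{Tpositive} for nonpositive exponents is a matrix with a zero entry, at which $f_\alpha$ is not twice differentiable (and the proposition is stated for $I=(-R,R)$, so it does not apply to $f_\alpha$ at all without modification). You would need a new irreducible low-rank matrix with strictly positive entries on which a negative fractional power fails positivity, which is nowhere established. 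The paper avoids this entirely with Lemma~\ref{lem:HiaiCP}: for the specific pair $A=((1+\epsilon ij))$, $B={\bf 1}_{n\times n}$, whose difference is completely positive and whose entries lie in $[1,1+\epsilon n^2]$, convexity along the segment forces $f_\alpha'[A]\geq f_\alpha'[B]$, i.e.\ Loewner monotonicity of $f_{\alpha-1}$ on exactly the pair for which Theorem~\ref{thm:HF_monotone} shows it fails for all $\alpha-1\in(0,n-1)\setminus\N$. That single first-derivative comparison disposes of every non-integer $\alpha\in(1,n)$ at once; this is the key lemma your proposal is missing.

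The two boundary integers also escape your second-derivative test. For $\phi_1(x)=|x|$ one has $\phi_1''\equiv 0$ away from the origin, so $\phi_1''[A]\geq 0$ is vacuously true and rules out nothing; the paper instead invokes the fact (Hiai's Proposition~2.4) that convexity on $\bp_2(\R)$ forces differentiability of $\phi_1$ at $0$, which fails. For $\psi_2$ one has $\psi_2''=2\,\sign$, which is not twice differentiable at $0$ and whose failure of positivity on an irreducible matrix without zero entries is not among the counterexamples of Theorem~\ref{Tpositive}; the paper instead exhibits an explicit three-parameter family $A={\bf 1}_{3\times 3}+vv^T$, $v=(a,b,-c)^T$, violating midpoint convexity of $\psi_2$ on $\bp_3^2(\R)$. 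Your instinct that ``the cleanest route is to reduce every convexity claim to an already-settled monotonicity claim for the once-differentiated function'' is the right one --- but the tool that makes it legitimate under a rank constraint is the completely-positive-difference Lemma~\ref{lem:HiaiCP}, not the second-derivative Proposition~\ref{prop:conv_implies_pos_rank2}, and the cases $\phi_1$ and $\psi_2$ still require separate ad hoc arguments.
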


\begin{proof}
Suppose that $k=1<n$ and $A = u u^T, B = v v^T
\in \bp_n^1$. If $A \geq B \geq 0$, then by the proof of Theorem
\ref{Tmonotone}, $v = c u$ for some $c \in [-1,1]$. Now for
any $\alpha > 0$ and $\lambda \in [0,1]$,
\[ \lambda f_\alpha[A] + (1-\lambda) f_\alpha[B] - f_\alpha[\lambda A +
(1 - \lambda) B] = (\lambda + (1-\lambda) c^{2\alpha} - (\lambda +
c^2(1-\lambda))^\alpha) f_\alpha[A]. \]

\noindent So $f_\alpha$ is convex on $\bp_n^1(\R)$ if and only if (using
$b = c^2$)
\[ \lambda + (1-\lambda) b^\alpha \geq (\lambda + b(1-\lambda))^\alpha,
\qquad \forall \lambda,b \in [0,1]. \]

\noindent This condition is equivalent to the function $x \mapsto
x^\alpha$ being convex on $[0,1]$ and hence on $[0,\infty)$ - in other
words, if and only if $\alpha \geq 1$. A similar argument can be applied
to analyze $\phi_\alpha, \psi_\alpha$.
If on the other hand $\alpha < 1$, then set $A := {\bf 1}_{2 \times 2}
\oplus {\bf 0}_{(n-2) \times (n-2)} \in \bp_n^1(I)$ and $B := {\bf 0}_{n
\times n}$, and compute:
\begin{equation}\label{Enotconvex}
\frac{1}{2} f[A] + \frac{1}{2} f[B] - f[\frac{1}{2}(A + B)] =
\frac{1}{2} f[A] - f[\frac{1}{2}A] = (2^{-1} - 2^{-\alpha}) f[A],
\end{equation}

\noindent which is clearly not in $\bp_n(\R)$ if $\alpha < 1$. It follows
that none of the functions $f = f_\alpha, \phi_\alpha, \psi_\alpha$ is
convex on $\bp_n^k(I)$ for $\alpha < 1$, $n \geq 2$, and $1 \leq k \leq
n$.

We now assume that $2 \leq k \leq n$, and show that $\N \cup [n,\infty)
\subset \conv(n,k)$. We first assert that for any differentiable function
$f: [0, \infty) \to \R$ such that $f'(x)$ is monotone on
$\bp_n([0,\infty))$, then $f$ is convex on $\bp_n([0,\infty))$. This
assertion parallels one implication in Theorem \ref{thm:Hiai_convex}(1)
for $I = [0,\infty)$ instead of $I = (-R,R)$. As the proof is similar to
the proof of \cite[Proposition 2.4]{Hiai2009}, it is omitted. 

Next, letting $f(x) = x^\alpha$ for $\alpha \in [n, \infty) \cup \N$, it
follows immediately from Theorem \ref{Tmonotone} that $f$ is convex on
$\bp_n([0,\infty))$. Thus $\N \cup [n, \infty) \subset \conv(n,n) \subset
\conv(n,k)$. Now note that for any $\alpha \geq 1$, $\phi_\alpha'(x) =
\alpha \psi_{\alpha-1}(x)$ and $\psi_\alpha'(x) =
\alpha\phi_{\alpha-1}(x)$.
Thus using Theorem \ref{thm:Hiai_convex}, it follows that $2\N \cup [n,
\infty) \subset \conv^\phi(n,k)$ and $(-1+2\N) \cup [n, \infty) \subset
\conv^\psi(n,k)$.

Note also from Equation \eqref{Enotconvex} that $\conv(n,k) \subset
[1,\infty)$, and similarly for $\conv^\phi(n,k)$ and $\conv^\psi(n,k)$.
Thus to show the reverse inclusions - i.e., that $\conv(n,k) \subset \N
\cup [n, \infty)$ (and analogously for $\phi_\alpha, \psi_\alpha$), we
consider three cases corresponding to the three functions $f = f_\alpha,
\phi_\alpha, \psi_\alpha$.\medskip

\noindent {\bf Case 1: $f(x) = f_\alpha(x)$.}
Let $\alpha \in \conv(n,k)$ and consider the matrices $A = A_\epsilon =
((1+\epsilon ij))_{i,j=1}^n$ and $B = {\bf 1}_{n \times n}$ for $\epsilon
> 0$. Since $A -B \in \cp_n$, by Lemma \ref{lem:HiaiCP} for $I = [1,
1+\epsilon n^2]$, we have $f'_\alpha[A] \geq f'_\alpha[B]$. Thus by
Theorem \ref{thm:HF_monotone}, it follows that $\alpha - 1 \geq n-1$ or
$\alpha \in \N$. Therefore $\conv(n,k) \subset \N \cup [n, \infty)$.
\medskip

\noindent {\bf Case 2: $f(x) = \phi_\alpha(x)$.}
Given $\alpha \in \conv^\phi(n,k)$ for $k \geq 2$, first note by Case 1
that
\begin{equation}\label{Econvex}
\conv^\phi(n,k)  \subset \conv^\phi(n,2) \subset \conv(n,2) = \N \cup
[n,\infty).
\end{equation}

\noindent Thus it suffices to show that there is no odd integer in $S :=
(0,n) \cap \conv^\phi(n,2)$. First note that for every odd integer
$\alpha \in S$, the function $\phi_\alpha$ is convex on
$\bp_{\alpha+1}^2(\R)$. There are now two cases: first if $\alpha>1$,
then define $A_{\alpha+1} \in \bp_{\alpha+1}^2(\R)$ as in \eqref{Erank2}.
Now $A_{\alpha+1}$ is irreducible since $\alpha \geq 3$. Applying
Proposition \ref{prop:conv_implies_pos_rank2} to $A_{\alpha+1}$, we
obtain $\phi_\alpha''[A_{\alpha+1}] \geq 0$. Now if $2 \leq \alpha < n$,
then this contradicts Case 2 of the proof of Theorem \ref{Tpositive}
since $\alpha$ is an odd integer. Therefore $\alpha \notin
\conv^\phi(n,2)$ for all odd integers $\alpha \in (1,n)$. The second case
is when $\alpha = 1$. Recall from \cite[Proposition 2.4]{Hiai2009} that
if $\alpha =1$ and $\phi_1 : \R \to \R$ is convex on
$\bp_{\alpha+1}^2(\R) = \bp_2(\R)$, then $\phi_1(x) = |x|$ would be
differentiable on $\R$, which is false. We conclude that $\conv^\phi(n,2)
\subset 2\N \cup [n,\infty)$.\medskip

\noindent {\bf Case 3: $f(x) = \psi_\alpha(x)$.}
We now prove that $\conv^\psi(n,2) \subset (-1+2\N) \cup
[n,\infty)$. Once again it suffices to show that no even integer $\alpha
\in (0,n)$ lies in $\conv^\psi(n,2)$. First assume that $\alpha>2$. Then
an argument similar to that for $\phi_\alpha$ above (together with the
analogous example in Theorem \ref{Tpositive} for $\psi_\alpha$) shows
that $\alpha \notin \conv^\psi(n,2)$. Finally, if $\alpha=2$, we provide
a three-parameter family of counterexamples to show that $\psi_2$ is not
convex on $\bp_3^2(\R)$ (and hence not convex on $\bp_n^2(\R)$ by adding
blocks of zeros). To do so, choose $0 < b < a < \infty$ and $c \in
(a^{-1}, \min(b^{-1}, 2a^{-1}))$, and define:
\[ v := (a,b,-c)^T, \qquad B := {\bf 1}_{3 \times 3}, \qquad A := B
+ v v^T. \]

\noindent Clearly, $A, B \in \bp^2_3(\R)$ and  $A \geq B$. Moreover,  
\[ C:= \frac{1}{2} (\psi_2[A] + \psi_2[B]) - \psi_2[ (A+B)/2] =
\frac{1}{4} \begin{pmatrix} a^4 & a^2 b^2 & (3ac-2)(2-ac)\\ a^2 b^2 & b^4
& b^2 c^2\\ (3ac-2)(2-ac) & b^2 c^2 & c^4 \end{pmatrix}. \]

\noindent Now verify that $\det C = - 4^{-3} [2b(ac-1)]^4 < 0$. Thus $A
\geq B \geq 0$ provide a family of counterexamples to the convexity of
$\psi_2$ on $\bp_3^2(\R)$ (with $\lambda = 1/2$).
%
%
\end{proof}

\begin{remark}
In \cite[Lemma 5.2]{Hiai2009}, Hiai generalizes the arguments of
FitzGerald and Horn in \cite[Theorems 2.2, 2.4]{FitzHorn} and proves that
for a given non-integer $0 < \alpha < n$, if $A_\epsilon := ((1+\epsilon
ij))_{i,j=1}^n$, then  
\[ f_\alpha\left[\frac{A_\epsilon+A_{\epsilon'}}{2}\right] \not\leq
\frac{f_\alpha[A_\epsilon]+f_\alpha[A_{\epsilon'}]}{2} \]

\noindent for some $\epsilon, \epsilon' \geq 0$ small enough. Hiai's
result can be used to deduce that $\conv(n,k) \subset \N \cup [n,
\infty)$ for $2 \leq k < n$. The proof of Theorem \ref{Tconvex} above is
different in that it builds directly on the results obtained for
monotonicity as compared to constructing specific matrices. The proof
also has the additional advantage that it classifies the integer powers
$x^m$ for $m < n$, which are Loewner convex.
\end{remark}

\section{Characterizing entrywise powers that are Loewner
super/sub-additive}\label{Ssuper}

Powers that are Loewner super/sub-additive have been studied for matrix
functions in parallel settings, where functions of matrices are evaluated
through the Hermitian functional calculus instead of entrywise (see e.g.
\cite{Ando88, AndoZhan99, SinghVasudeva2002, BourinUchiyama2007,
Bourin2010, AudenaertAujla2012}). We now characterize the powers that are
Loewner super/sub-additive when applied entrywise. 

\begin{theorem}\label{Tsuper}
Suppose $1 \leq k \leq n$ are integers with $n \geq 2$. Then,
\begin{enumerate}
\item $\super(n,k) = \N \cup [n,\infty), \quad \super^\phi(n,k) = 2\N
\cup [n,\infty), \quad \super^\psi(n,k) = (-1+2\N) \cup [n, \infty).$

\item
\begin{enumerate}
\item
$\displaystyle \sub(n,k) = \begin{cases}
\{ 1 \}, & \text{ if } 2 \leq k \leq n,\\
\{ 0, 1 \}, & \text{ if } k=1,n>2,\\
(-\infty,0] \cup \{ 1 \}, & \text{ if } (n,k) = (2,1).
\end{cases}$

\item $\sub^\phi(n,k) = \emptyset$ for all $1 \leq k \leq n$.

\item $\sub^\psi(n,k) = \{ 0, 1 \}$ if $(n,k) = (2,1)$, and $\{ 1 \}$
otherwise.
\end{enumerate}
\end{enumerate}
\end{theorem}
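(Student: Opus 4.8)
The plan is to prove the two parts independently, and within each to reduce the even/odd extensions $\phi_\alpha,\psi_\alpha$ to the ordinary powers $f_\alpha$ on nonnegative matrices wherever the entries can be kept nonnegative. For super-additivity I would first record the easy inclusions. If $m$ is a positive integer, then the Schur product theorem gives $(A+B)^{\circ m}=\sum_{j=0}^m\binom{m}{j}A^{\circ j}\circ B^{\circ(m-j)}\ge A^{\circ m}+B^{\circ m}$, since each cross term is a Hadamard product of positive semidefinite matrices; as $f_m,\phi_{2m},\psi_{2m-1}$ all restrict to $x\mapsto x^{m}$, this places $\N,2\N,-1+2\N$ in the respective sets for every $n,k$. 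For real $\alpha\ge n$ I would invoke the convexity already established in Theorem \ref{Tconvex}: by Theorem \ref{thm:Hiai_convex}(1) convexity of $f$ on $\bp_n$ forces $f'$ to be Loewner monotone, and then the entrywise identities $f[A+B]-f[A]=\int_0^1 f'[A+tB]\circ B\,dt$ and $f[B]=\int_0^1 f'[tB]\circ B\,dt$ (both using $f(0)=0$), combined with $f'[A+tB]\ge f'[tB]$ and the Schur product theorem, give $f[A+B]-f[A]\ge f[B]$. This yields $\N\cup[n,\infty)\subset\super(n,n)\subset\super(n,k)$ for all $k$, and similarly for $\phi,\psi$.

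The reverse inclusions are the heart of the matter. Since $\bp_n^k\subset\bp_n$, the super-additivity requirement only weakens as $k$ decreases, so $\super(n,1)$ is the largest of the sets and it suffices to prove $\super(n,1)\subset\N\cup[n,\infty)$ (and its parity analogues). The first step is a monotonicity bound: writing the rank-two matrix $A_\epsilon:=((1+\epsilon ij))=\one{n \times n}+\epsilon ww^T$ with $w=(1,\dots,n)^T$ as a sum of two rank-one matrices, super-additivity forces $f_\alpha[A_\epsilon]\ge f_\alpha[\one{n \times n}]+f_\alpha[\epsilon ww^T]\ge f_\alpha[\one{n \times n}]$, which by Theorem \ref{thm:HF_monotone} fails for $\alpha\in(0,n-1)\setminus\N$. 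This disposes of the range $(0,n-1)$. The remaining and decisive window is $\alpha\in(n-1,n)$, where $f_\alpha$ is simultaneously positive, monotone, and even convex along rank-one perturbations, so that no infinitesimal (derivative) test can detect a failure; indeed a scaling analysis of the identity above only ever recovers positivity of $f_{\alpha-1}$, i.e.\ the monotonicity threshold $n-1$, never the convexity threshold $n$.

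The main obstacle is therefore to exhibit an \emph{explicit} failure of super-additivity on $(n-1,n)$ using rank-one inputs. The key realization is to drive a parameter to infinity rather than to use the FitzGerald--Horn perturbation $\epsilon\to0^+$. Taking $A=\one{n \times n}$ and $B=vv^T$ with $v=(1,t,t^2,\dots,t^{n-1})^T$ and letting $t\to\infty$, the block dominated by $B$ makes the defect $D:=f_\alpha[A+B]-f_\alpha[A]-f_\alpha[B]$ degenerate to a difference of two rank-one positive semidefinite matrices (schematically $\alpha\,t^{\,2\alpha-2}(v^{\circ(\alpha-1)}\circ u)(v^{\circ(\alpha-1)}\circ u)^T-u^{\circ\alpha}u^{\circ\alpha T}$ plus lower-order terms), which is generically indefinite. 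Already for $n=2$, $v=(1,t)^T$, the leading principal minors of $D$ carry the scalar factor $2^\alpha-2-\alpha$, which is negative precisely on $(n-1,n)=(1,2)$, vanishes at the endpoints, and is positive beyond $n$. Verifying that the analogous $n\times n$ Vandermonde-type determinant of $D$ is negative on all of $(n-1,n)\setminus\N$ is the technical crux; I expect to isolate its sign through the generalized Dirichlet polynomial count of Lemma \ref{Ldescartes}, exactly as in the positivity argument of Theorem \ref{Tpositive}. The extensions $\phi_\alpha,\psi_\alpha$ then follow by combining this nonnegative construction with the parity information already in hand: super-additivity implies both positivity and monotonicity, so Theorems \ref{Tpositive} and \ref{Tmonotone} remove the non-integers and wrong-parity integers below $n-1$, while the wrong-parity integers in $[n-1,n)$ are excluded by the sign-sensitive cosine matrices $A_r$ of \eqref{Erank2} together with the derivative relations $\phi_\alpha'=\alpha\psi_{\alpha-1}$, $\psi_\alpha'=\alpha\phi_{\alpha-1}$, mirroring the convexity analysis.

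For sub-additivity the same machinery applies with reversed inequalities, and the answers are dimension- and rank-sensitive. The exponent $\alpha=1$ always yields equality, hence lies in $\sub(n,k)$ and $\sub^\psi(n,k)$. For $2\le k\le n$ I would show it is the only survivor: testing $A=B=I_2\oplus\mathbf0$ gives $2^\alpha I_2\le 2I_2$, forcing $\alpha\le1$, while the same large-parameter family $v=(1,t,\dots,t^{n-1})^T$ now produces an indefinite $D$ (the factor $1-2^\alpha$ being negative) and so rules out $\alpha<1$. The rank-one cases are governed by support combinatorics: for nonnegative matrices there is no cancellation, so $f_0$ (the support indicator) is sub-additive exactly at rank one, which contributes the value $0$; and in the genuinely small case $(n,k)=(2,1)$ the scarcity of $2\times2$ rank-one constraints lets every $\alpha\le0$ pass, giving $(-\infty,0]\cup\{1\}$. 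Finally $\sub^\phi(n,k)=\emptyset$ because even $\alpha=1$ fails: with $u=(1,1)^T$ and $v=(1,-1)^T$ one computes $\phi_1[uu^T]+\phi_1[vv^T]-\phi_1[uu^T+vv^T]=\left(\begin{smallmatrix}0&2\\2&0\end{smallmatrix}\right)\oplus\mathbf0\notin\bp_n$, and this sign-mixing obstruction, applied entrywise, trims $\sub^\psi$ down to $\{1\}$ (respectively $\{0,1\}$ when $(n,k)=(2,1)$). The single genuinely hard ingredient remains the determinantal sign computation for the large-parameter defect, which simultaneously closes the super-additive window $(n-1,n)$ and the sub-additive window $(0,1)$.
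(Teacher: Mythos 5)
Your skeleton is sound in places (integer powers via the binomial/Schur argument, the integral identity plus monotonicity for $\alpha\ge n$, the reduction to $\super(n,1)$, the $u=(1,1)$, $v=(1,-1)$ counterexample for $\phi$, the support-indicator argument for $f_0$), but the decisive steps are not actually carried out, and in two places the proposed route would not work. First, for the window of non-integer powers below $n$ you only verify $n=2$ and explicitly defer ``the technical crux'' --- the sign of the large-$t$ determinant of the defect --- to an expected Dirichlet-polynomial argument that is never given. Moreover, your premise that no small-perturbation test can detect failure on $(n-1,n)$ is false: the paper's proof takes $A=\mathbf{1}_{n\times n}$, $B=\epsilon uu^T$ with $\epsilon\to 0^+$, Taylor-expands $(1+x)^\alpha$ entrywise, and uses the linear independence of $\{x, x^2,\dots,x^{\lfloor\alpha\rfloor}, x^\alpha\}$ (Proposition \ref{Pvandermonde} / Corollary \ref{Cindependent}) to isolate the \emph{non-analytic} term $-\epsilon^\alpha f_\alpha[uu^T]$, whose coefficient dominates the $O(\epsilon^{1+\lfloor\alpha\rfloor})$ remainder precisely when $\alpha\notin\N$; this handles all of $(1,n)\setminus\N$ at once and needs only $1+\lfloor\alpha\rfloor\le n$. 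Second, your treatment of the leftover wrong-parity integers in $[n-2,n)$ ``mirroring the convexity analysis'' does not go through: super-additivity is not known a priori to imply Loewner convexity (that equivalence is a \emph{consequence} of the theorem, Corollary \ref{Csuper}), so you have no second-derivative test; and the first-derivative test you do have (super-additive $\Rightarrow$ positive $\Rightarrow$ monotone, then Proposition \ref{prop:mono_implies_pos_rank2}) only reduces, e.g., $\phi_{n-1}$ to positivity of $\psi_{n-2}$, whose failure requires matrices of size $n+1$ and hence is invisible inside $\bp_n$. The paper instead expands $\phi_{2l-1}[\mathbf{1}+vv^T]-\mathbf{1}-\phi_{2l-1}[vv^T]$ by the binomial theorem and applies Corollary \ref{Cindependent} to the $2l\le n$ functions $x,\dots,x^{2l-2},\psi_{2l-1},\phi_{2l-1}$ to produce a rank-one counterexample.

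On sub-additivity the gaps are larger. You assert, but do not prove, the three genuinely hard claims: that $f_\alpha$ fails to be sub-additive on $\bp_n^1$ for $\alpha\in(0,1)$ and for $\alpha<0$, $n\ge 3$ (the paper again uses the Taylor-plus-linear-independence device, extracting the terms $\epsilon\alpha x$ and $\epsilon^2\tfrac{\alpha(\alpha-1)}{2}x^2$ respectively); that $f_\alpha$ \emph{is} sub-additive on $\bp_2^1$ for every $\alpha<0$ (the paper's longest computation: reducing to midpoint-convexity of $g(x,y)=\log\bigl(e^{\alpha x}+e^{\alpha y}-(e^x+e^y)^\alpha\bigr)$, showing $\det H_g\equiv 0$ and $g_{xx}\ge 0$ via super/sub-additivity of scalar powers); and the $\psi_\alpha$ analysis for $\alpha\le 0$ (an explicit three-parameter rank-one family for $\psi_0$, $n\ge 3$, and a determinant estimate via Jensen's inequality for $\alpha<0$ on $\bp_2^1$). ``Scarcity of $2\times 2$ rank-one constraints'' and ``sign-mixing applied entrywise'' are not arguments. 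Also note that for $2\le k\le n$ and $\alpha<1$ the paper's exclusion is much simpler than your large-$t$ family: pick $A\in\bp_n^2$ with $f_\alpha[A]\notin\bp_n$ (or a rank-one witness from the $(0,1)$ case) and set $B=A$, so the defect is $(2-2^\alpha)f_\alpha[A]$.
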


Before we prove the result, note that it yields a 
hitherto unknown connection between super-additivity and convexity with
respect to the Loewner ordering.

\begin{corollary}\label{Csuper}
Fix $\alpha > 0$ and integers $2 \leq k \leq n$. A fractional power
function $f = f_\alpha, \phi_\alpha, \psi_\alpha$ is Loewner convex on
$\bp_n^k(I)$ if and only if $f$ is Loewner super-additive. Here $I =
[0,\infty)$ if $f = f_\alpha$ and $I=\R$ otherwise. 
\end{corollary}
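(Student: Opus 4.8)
The plan is to read the corollary off directly from the two classification results already established, namely Theorem~\ref{Tconvex} for convexity and part~(1) of Theorem~\ref{Tsuper} for super-additivity. Fix $\alpha>0$ and integers $2\le k\le n$, and let $I=[0,\infty)$ if $f=f_\alpha$ and $I=\R$ if $f=\phi_\alpha$ or $f=\psi_\alpha$, as in the statement. The key observation is that, for each of the three families, the two theorems produce \emph{identical} sets of admissible exponents; the corollary is then just the remark that membership of a fixed $\alpha$ in these coinciding sets is one and the same condition for both properties.

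Concretely, the first step is to compare the explicit formulas in Theorems~\ref{Tconvex} and~\ref{Tsuper}, which on the whole range $2\le k\le n$ give
\[ \conv(n,k)=\super(n,k)=\N\cup[n,\infty), \]
\[ \conv^\phi(n,k)=\super^\phi(n,k)=2\N\cup[n,\infty), \qquad \conv^\psi(n,k)=\super^\psi(n,k)=(-1+2\N)\cup[n,\infty). \]
The second step is to unwind the definitions of these sets: by construction $f$ is Loewner convex on $\bp_n^k(I)$ if and only if $\alpha$ lies in the relevant $\conv$-set, and Loewner super-additive if and only if $\alpha$ lies in the relevant $\super$-set. Since these two sets coincide, the two membership conditions are equivalent, which is exactly the asserted biconditional. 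Note that $2\le k\le n$ is precisely the range in which, by Corollary~\ref{Ccritexp}, both properties are rank-independent, so no separate attention to the value of $k$ is needed.

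I expect the only subtlety here to be bookkeeping rather than mathematics, since all of the genuine content sits inside the two classification theorems and the corollary merely registers that their answers agree. It is worth stressing that this agreement is \emph{not} a formal consequence of the positive homogeneity of the power functions: already in the scalar case $x\mapsto x^\alpha$, convexity and super-additivity are each equivalent to $\alpha\ge 1$, yet neither can be deduced from the other by homogeneity alone (the relevant midpoint inequalities point in opposite directions). Thus the natural and cleanest route really is to pass through the complete classifications. The single point to flag is the logical order: since the proof invokes Theorem~\ref{Tsuper}, it is understood to follow, rather than precede, the proof of that theorem.
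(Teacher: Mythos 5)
Your proposal is correct and is exactly the paper's argument: the paper proves Corollary~\ref{Csuper} by simply observing that the sets in Theorem~\ref{Tconvex} and Theorem~\ref{Tsuper}(1) coincide for $2 \leq k \leq n$. Your additional remarks on the logical ordering and on why the agreement is not a formal consequence of homogeneity are accurate but not needed for the proof itself.
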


\begin{proof}
The result follows from Theorems \ref{Tconvex} and \ref{Tsuper}.
\end{proof}

\begin{remark}\label{Rsuper}
Note {\it a priori} that the defining inequalities for convexity and
super-additivity go in opposite ways. More precisely, for $\alpha > 0$
Loewner convexity is equivalent to Loewner midpoint convexity by
continuity. Thus, $f = f_\alpha, \phi_\alpha, \psi_\alpha$ is convex if
and only if $f[A+B] \leq 2^{\alpha-1} (f[A] + f[B])$ for $A \geq B \geq
0$. On the other hand, super-additivity asserts that $f[A+B] \geq f[A] +
f[B]$ for $A,B \geq 0$. However, recall that a convex function $f :
[0,\infty) \to [0,\infty)$ is super-additive if and only if $f(0) = 0$.
The characterization of entrywise Loewner convex powers in Corollary
\ref{Csuper} in terms of the super-additive ones is thus akin to the
aforementioned fact for $n=1$.
\end{remark}

In order to prove Theorem \ref{Tsuper}, we extend classical results about
generalized Vandermonde determinants to the odd and even extensions of
the power functions. 

\begin{proposition}\label{Pvandermonde}
Let $0 < R \leq \infty$. Then,
\begin{enumerate}
\item the functions $\{ f_\alpha : \alpha \in \R \} \cup \{ f \equiv 1
\}$ are linearly independent on $I = [0,R)$.
\item the functions $\{ \phi_\alpha, \psi_\alpha : \alpha \in \R \} \cup
\{ f \equiv 1 \}$ are linearly independent on $I = (-R,R)$.
\end{enumerate}
\end{proposition}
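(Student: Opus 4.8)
The plan is to reduce everything to the linear independence of the pure power functions $x \mapsto x^\alpha$ on the open ray $(0,R)$, and to invoke Lemma~\ref{Ldescartes} via the change of variable $x = e^t$. Concretely, suppose $\sum_{i=1}^m c_i x^{\alpha_i} = 0$ for all $x \in (0,R)$, where the $\alpha_i$ are distinct reals and not all $c_i$ vanish. Setting $x = e^t$ and $\lambda_i := e^{\alpha_i}$, this becomes a generalized Dirichlet polynomial $\sum_i c_i \lambda_i^t$ in the variable $t$, with the $\lambda_i$ distinct and positive. After discarding the vanishing coefficients and ordering the surviving $\lambda_i$ decreasingly so that the leading coefficient is nonzero, Lemma~\ref{Ldescartes} shows this function has only finitely many real zeros. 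But $x = e^t$ carries $(0,R)$ onto the interval $(-\infty, \ln R)$, on which the Dirichlet polynomial vanishes identically --- a contradiction. Hence all $c_i = 0$, establishing the linear independence of distinct pure powers on $(0,R)$.

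For part (1), I would consider a finite combination $\sum_{i=1}^m c_i f_{\alpha_i} + c_0 \cdot 1 \equiv 0$ on $[0,R)$, with the $\alpha_i$ distinct. Since $0 \in [0,R)$ and $f_{\alpha_i}(0) = 0$ for every $\alpha_i$ by the convention $f_\alpha(0) = 0$, evaluating at $x = 0$ immediately yields $c_0 = 0$. (This is precisely where the distinction between the constant $1$ and $f_0$ matters: the two functions agree on $(0,R)$ but differ at the origin.) The remaining identity $\sum_i c_i x^{\alpha_i} = 0$ then holds on $(0,R)$, so the preliminary reduction forces every $c_i = 0$, proving part (1).

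For part (2), I would exploit the even/odd structure of the functions on the symmetric interval $(-R,R)$. Write a finite combination
\[ \sum_i a_i \phi_{\alpha_i} + \sum_j b_j \psi_{\beta_j} + c \cdot 1 \equiv 0 \quad \text{on } (-R,R), \]
with the $\alpha_i$ distinct and the $\beta_j$ distinct. Since each $\phi_{\alpha}$ and the constant $1$ are even while each $\psi_\beta$ is odd, taking the even and odd parts of this identity splits it into the two independent equations $\sum_i a_i \phi_{\alpha_i} + c \equiv 0$ and $\sum_j b_j \psi_{\beta_j} \equiv 0$ on $(-R,R)$. Restricting the first to $[0,R)$ --- where $\phi_{\alpha_i}$ coincides with $f_{\alpha_i}$, including the value $0$ at the origin --- reduces it exactly to the situation of part (1), giving all $a_i = 0$ and $c = 0$. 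Restricting the second to $(0,R)$, where $\psi_{\beta_j}$ coincides with $x^{\beta_j}$, reduces it to the linear independence of distinct pure powers, giving all $b_j = 0$. This proves part (2).

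The routine parts are the verifications that the change of variable turns a power combination into a Dirichlet polynomial and that the even/odd projections separate the three families cleanly. The one genuinely delicate point --- and the step I would flag --- is the careful bookkeeping of the constant function $1$ against $f_0$ (and against the even summands $\phi_\alpha$): these agree on $(0,R)$ and can only be distinguished by their values at $x = 0$, which is exactly why evaluation at the origin, enabled by the convention $f_\alpha(0) = \phi_\alpha(0) = \psi_\alpha(0) = 0$, is indispensable.
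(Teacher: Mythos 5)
Your proof is correct, but it takes a genuinely different route from the paper's. The paper argues via generalized Vandermonde determinants: citing Gantmacher, it shows that for \emph{any} nodes $0 < x_1 < \cdots < x_n < R$ the evaluation matrix $(x_j^{\alpha_i})$, augmented by the all-ones row and the point $x=0$, is nonsingular, and for part (2) it assembles the block matrix $\Psi = \bigl(\begin{smallmatrix} M & M \\ M & -M \end{smallmatrix}\bigr)$ at the points $\pm x_1, \dots, \pm x_n, 0$. You instead reuse Lemma \ref{Ldescartes}: the substitution $x = e^t$ turns a vanishing power combination on $(0,R)$ into a generalized Dirichlet polynomial vanishing on the infinite interval $(-\infty, \log R)$, which forces all coefficients to vanish; evaluation at the origin (legitimate thanks to the convention $f_\alpha(0)=\phi_\alpha(0)=\psi_\alpha(0)=0$) disposes of the constant function, and the even/odd projection cleanly separates the $\phi_\alpha$ (even), the $\psi_\beta$ (odd), and the constant in part (2). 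Both arguments are sound; yours is arguably more self-contained, since it leans on a lemma already stated in the paper rather than on an external Vandermonde reference, and the parity decomposition is tidier than the block-matrix manipulation. What the paper's version buys in exchange is a \emph{pointwise} conclusion --- nonsingularity of the evaluation matrix at every choice of distinct positive nodes --- which is precisely the form in which the proposition is invoked in the proof of Corollary \ref{Cindependent}; from your qualitative linear-independence statement one would still need the standard (but additional) observation that linearly independent functions admit \emph{some} set of nodes at which the evaluation matrix is nonsingular.
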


\begin{proof}
Fix $\alpha_1 < \cdots < \alpha_n$ and define $\alpha := (\alpha_1,
\dots, \alpha_n)$. We first show that the set of functions
$\{x^{\alpha_i}: i=1,\dots,n\} \cup \{f \equiv 1\}$ is linearly
independent on $[0,R)$. Indeed, fix ${\bf x} := (0, x_1, \dots, x_n) \in
\R^n$ for any $0 < x_1 < \cdots < x_n < R$; then by \cite[Chapter XIII,
\S8, Example 1]{Gantmacher_Vol2}, the vectors
\[ {\bf x}^{\circ \alpha_j} := \left(0, x_1^{\alpha_j}, \dots,
x_n^{\alpha_j} \right), \qquad j=1, \dots, n \]

\noindent and $(1, 1, \dots, 1)$ are linearly independent. 

We next show that the set of functions $\{\phi_{\alpha_i},
\psi_{\alpha_i}: i=1,\dots,n\} \cup \{f \equiv 1\}$ is linearly
independent on $(-R,R)$. Indeed, fix ${\bf x'} := (x_1, \dots, x_n)$ with
$x_i \in (0,R)$ as above; then by the above analysis,
\begin{equation}\label{EPsi}
\Psi({\bf x'},{\bf \alpha}) := \begin{pmatrix}
(\phi_{\alpha_i}(x_j))_{i,j=1}^n & (\phi_{\alpha_i}(-x_j))_{i,j=1}^n \\
(\psi_{\alpha_i}(x_j))_{i,j=1}^n & (\psi_{\alpha_i}(-x_j))_{i,j=1}^n
\end{pmatrix}
\end{equation}

\noindent is a nonsingular matrix, since it is of the form
$\displaystyle \begin{pmatrix}M & M \\ M & -M\end{pmatrix}$ for a
nonsingular matrix $M$. But then
$\displaystyle \begin{pmatrix} \Psi({\bf x'}, \alpha) & {\bf 0}_{2n
\times 1}\\ {\bf 1}_{1 \times 2n} & 1 \end{pmatrix}$
is also nonsingular, whence the points $\pm x_1, \dots, \pm x_n, 0$
provide the required nonsingular matrix. This proves the second
assertion.
\end{proof}

Proposition \ref{Pvandermonde} has the following consequence that is
repeatedly used in proving Theorem \ref{Tsuper}. 

\begin{corollary}\label{Cindependent}
Let $0 < R \leq \infty$ and $I = (-R,R)$ or $I = [0,R)$. Fix integers $1
\leq m \leq n$ and scalars $c_1, \dots, c_m$ and $\alpha_1 < \alpha_2 <
\cdots < \alpha_m$. Suppose $\{ g_1, \dots, g_m \} \subset \{
\phi_{\alpha_1}, \dots, \phi_{\alpha_m}, \psi_{\alpha_1}, \dots,
\psi_{\alpha_m} \}$, and define $F(x) := \sum_{i=1}^m c_i g_i(x)$. Then
there exist vectors $u \in (I \cap (-1,1))^n$ and $v_i \in \R^n$ that do
not depend on $c_i$, such that $v_i^T F[u u^T] v_i = c_i$ for all
$i=1,\dots,m$.
\end{corollary}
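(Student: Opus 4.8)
The plan is to evaluate $F[uu^T]$ at a rank-one matrix $uu^T$, so that the entrywise power functions interact multiplicatively, and then extract the individual coefficients $c_i$ by pairing against suitable quadratic forms $v_i^T (\cdot) v_i$. First I would fix a vector $u = (u_1, \dots, u_n)^T \in (I \cap (-1,1))^n$ with distinct nonzero entries $0 < |u_1| < \cdots < |u_n| < 1$ (the sign choices to be specified below). Because each $g_i$ is either $\phi_{\alpha_i}$ or $\psi_{\alpha_i}$ and these are multiplicative, the $(p,q)$ entry of $g_i[uu^T]$ is $g_i(u_p u_q)$, which splits as a product of a function of $u_p$ and a function of $u_q$: indeed $\phi_{\alpha}(u_p u_q) = \phi_\alpha(u_p)\phi_\alpha(u_q)$ and $\psi_{\alpha}(u_p u_q) = \psi_\alpha(u_p)\psi_\alpha(u_q)$. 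Thus $g_i[uu^T] = w_i w_i^T$ is itself rank one, where $w_i := (g_i(u_1), \dots, g_i(u_n))^T$, and $F[uu^T] = \sum_{i=1}^m c_i\, w_i w_i^T$. Consequently $v^T F[uu^T] v = \sum_{i=1}^m c_i (w_i^T v)^2$ for any $v$.

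The task now reduces to the following: find vectors $v_1, \dots, v_m \in \R^n$, independent of the $c_i$, such that $(w_i^T v_j)^2 = \delta_{ij}$ for all $i,j$. This is a \emph{biorthogonality} (dual basis) statement for the family $\{w_i\}_{i=1}^m$. The key point is that the $w_i$ are linearly independent, which is exactly what Proposition \ref{Pvandermonde} guarantees: the functions in $\{\phi_{\alpha_1}, \dots, \phi_{\alpha_m}, \psi_{\alpha_1}, \dots, \psi_{\alpha_m}\}$ are linearly independent on $(-R,R)$, so by evaluating at $n \geq m$ suitable points (which is where the distinct magnitudes and a careful choice of signs of the $u_p$ enter, to separate the even functions $\phi$ from the odd functions $\psi$) the corresponding evaluation vectors $w_i$ are linearly independent in $\R^n$. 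Once $\{w_1, \dots, w_m\}$ is linearly independent, a standard dual-basis construction produces vectors $\tilde v_j \in \sspan\{w_1, \dots, w_m\}$ with $w_i^T \tilde v_j = \delta_{ij}$; taking $v_i := \tilde v_i$ gives $v_i^T F[uu^T] v_i = \sum_{\ell} c_\ell (w_\ell^T v_i)^2 = c_i$, as required, and these $v_i$ manifestly do not depend on the $c_i$.

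The main obstacle is the sign bookkeeping needed to simultaneously realize both the even extensions $\phi_{\alpha_i}$ and the odd extensions $\psi_{\alpha_i}$ using a single vector $u$. Since $\phi_\alpha(u_p) = |u_p|^\alpha$ depends only on $|u_p|$ while $\psi_\alpha(u_p) = \sgn(u_p)|u_p|^\alpha$ also sees the sign, I would need to choose the signs of the components $u_p$ so that the resulting $w_i$'s span the correct $m$-dimensional space. This is precisely the content encoded in the block matrix $\Psi(\mathbf{x}', \alpha)$ of \eqref{EPsi}: evaluating at both $+x_p$ and $-x_p$ separates the $\phi$ and $\psi$ contributions, and the nonsingularity established there ensures the evaluation vectors are independent. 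I would leverage that nonsingularity directly rather than re-deriving it, so the only remaining work is to embed the selected subfamily $\{g_1, \dots, g_m\}$ into the full $2m$-function system and restrict the dual-basis vectors back down, which is routine linear algebra.
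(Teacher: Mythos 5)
Your proposal is correct and follows essentially the same route as the paper's proof: both exploit the multiplicativity of $\phi_\alpha,\psi_\alpha$ to write $F[uu^T]=\sum_i c_i\, g_i[u]\,g_i[u]^T$, use the nonsingularity of the matrix $\Psi({\bf x},\alpha)$ from Proposition \ref{Pvandermonde} to select entries $u_j$ from among the $\pm x_j$ making the evaluation vectors $g_i[u]$ linearly independent, and then take $v_i$ to be a dual (biorthogonal) family. The only cosmetic difference is that the paper first pads the exponent list to $\alpha_1<\cdots<\alpha_n$ so as to work with the full $2n\times 2n$ matrix $\Psi$ before passing to the relevant $m\times 2n$ submatrix, which is the same column-selection step you describe.
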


\begin{proof}
Suppose first $I = (-R,R)$. Choose scalars $\alpha_n > \alpha_{n-1} >
\cdots > \alpha_{m+1} > \alpha_m$. By Proposition \ref{Pvandermonde}, the
functions $\phi_{\alpha_1}, \dots, \phi_{\alpha_n}, \psi_{\alpha_1},
\dots, \psi_{\alpha_n}$ are linearly independent. Thus, as in the proof
of Proposition \ref{Pvandermonde}, for all pairwise distinct $x_1, \dots,
x_n \in (0,1) \cap I$, the matrix $\Psi({\bf x}, \alpha)$ as in
\eqref{EPsi} is nonsingular, where ${\bf x} := (x_1, \dots, x_n)$ and
$\alpha :=(\alpha_1, \dots, \alpha_n)$. Now consider the submatrix $C_{m
\times 2n}$ of $\Psi({\bf x}, \alpha)$ whose rows correspond to the
functions $g_i$ for $1 \leq i \leq m$. Since $C$ has full rank, choose
elements $u_1, u_2, \dots, u_n$ from among the $\pm x_i$ such that the
matrix $(g_i(u_j))_{i,j=1}^m$ is nonsingular. Now set $u := (u_1, \dots,
u_n)^T$; then the vectors $g_i[u]$ are linearly independent. Choose $v_i$
to be orthogonal to $g_j[u]$ for $j \neq i$, and such that $v_i^T g_i[u]
= 1$. It follows that $v_i^T F[u u^T] v_i = c_i$ for all $i$. The proof
is similar for $I = [0,R)$.
\end{proof}

We now classify the entrywise powers that are Loewner super/sub-additive.

\begin{proof}[Proof of Theorem \ref{Tsuper}]\hfill

\noindent {\bf (1) Super-additivity.}
Fix an integer $1 \leq k \leq n$. First apply the definition of
super-additivity to $A = B = {\bf 1}_{n \times n} \in
\bp_n^k([0,\infty))$ to conclude that if $\alpha \in \R$ and one of
$f_\alpha, \phi_\alpha, \psi_\alpha$ is Loewner super-additive, then
$\alpha \geq 1$. We now consider three cases corresponding to the three
functions $f(x) = f_\alpha(x), \phi_\alpha(x)$, and $\psi_\alpha(x)$ for
$\alpha \geq 1$.\medskip

\noindent {\bf Case 1: $f(x) = f_\alpha(x)$.}
That $f_\alpha$ is super-additive on $\bp_n([0,\infty))$ for $\alpha \in
\N$ follows by applying the binomial theorem. Now, suppose $\alpha \in
(1,\infty) \setminus \N$. We adapt the argument in \cite[Theorem
2.4]{FitzHorn} to our situation. First assume that $\alpha \geq n$; then
for $A,B \in \bp_n([0,\infty))$, 
\[ f_\alpha[A+B] = f_\alpha[A] + \alpha \int_0^1 B \circ
f_{\alpha-1}[t(A+B)+(1-t)A]\ dt. \]

\noindent Note that $t(A+B) + (1-t)A = A+tB \geq tB$ for all $0 \leq t
\leq 1$. Since $\alpha-1 \geq n-1$, it follows by Theorem
\ref{thm:HF_monotone} that $f_{\alpha-1}[t(A+B) + (1-t)A] \geq
t^{\alpha-1} f_{\alpha-1}[B]$. Therefore,
\[ f_\alpha[A+B] \geq f_\alpha[A] + \alpha f_\alpha[B] \int_0^1
t^{\alpha-1}\ dt = f_\alpha[A]+f_\alpha[B]. \]

\noindent This shows that $f_\alpha$ is super-additive on
$\bp_n([0,\infty))$, and hence on $\bp_n^k([0,\infty))$ if $\alpha \geq
n$. The last remaining case is when $\alpha \in (1,n) \setminus \N$.
Define $g_\alpha(x) := (1+x)^\alpha$. Given $\epsilon > 0$ and $v \in
(0,1)^n$, apply Taylor's theorem entrywise to $g_\alpha[\epsilon v v^T]$
to obtain:
\begin{equation}\label{eqn:taylor_g}
g_\alpha[\epsilon v v^T] = {\bf 1}_{n \times n} + \sum_{i=1}^{\lfloor
\alpha \rfloor} \epsilon^i \binom{\alpha}{i} f_i[v] f_i[v]^T +
O(\epsilon^{1+ \lfloor \alpha \rfloor}) C, 
\end{equation}

\noindent where $C = C(v)$ is an $n \times n$ matrix that is independent
of $\epsilon$. By Corollary \ref{Cindependent} applied to $F(x) =
\sum_{i=1}^{\lfloor \alpha \rfloor} \epsilon^i \binom{\alpha}{i} x^i -
\epsilon^\alpha x^\alpha$ and $m = 1 + \lfloor \alpha \rfloor \leq n$,
there exist $u \in (0,1)^n$ and $x_\alpha \in \R^n$ such that $x_\alpha^T
F[u u^T] x_\alpha = -\epsilon^\alpha$. It follows that
\[ x_\alpha^T \left(f_\alpha[{\bf 1}_{n \times n} + \epsilon u u^T] -
{\bf 1}_{n \times n} - \epsilon^\alpha f_\alpha[u u^T]\right) x_\alpha =
O(\epsilon^{1+ \lfloor \alpha \rfloor}) x_\alpha^T C x_\alpha -
\epsilon^\alpha, \]

\noindent and the last expression is negative for sufficiently small
$\epsilon = \epsilon_0 > 0$. Hence $f_\alpha[{\bf 1}_{n \times n} +
\epsilon_0 u u^T] \not\geq f_\alpha[{\bf 1}_{n \times n}] +
f_\alpha[\epsilon_0 u u^T]$. This shows that $f_\alpha$ is not
super-additive on $\bp_n^1([0,\infty))$ and hence on
$\bp_n^k([0,\infty))$, for $\alpha \in (1,n) \setminus \N$.\medskip

\noindent {\bf Case 2: $f(x) = \phi_\alpha(x)$.}
Clearly, the assertion holds if $\alpha \in 2\N$ and $1 \leq k \leq n$,
since in that case $\phi_\alpha \equiv x^\alpha$. Next if $\alpha \geq n
\geq 2$, then as in Case 1, for $A,B \in \bp_n(\R)$, 
\[ \phi_\alpha[A+B] = \phi_\alpha[A] + \alpha \int_0^1 B \circ
\psi_{\alpha-1}[t(A+B)+(1-t)A]\ dt. \]

\noindent Since $\alpha-1 \geq n-1$, by Theorem \ref{Tmonotone} the
function $\psi_{\alpha-1}$ is monotone on $\bp_n(\R)$. Thus,
\[ \phi_\alpha[A+B] \geq \phi_\alpha[A] + \alpha B \circ
\psi_{\alpha-1}[B] \int_0^1 t^{\alpha-1}\ dt =
\phi_\alpha[A]+\phi_\alpha[B]. \]

\noindent It follows that $\phi_\alpha$ is super-additive on
$\bp_n^k(\R)$ for $\alpha \in 2 \N \cup [n,\infty)$. Next note by Case
(1) that $\phi_\alpha$ is not super-additive on $\bp_n^k(\R)$ for $\alpha
\in (1,n) \setminus \N$. It thus remains to prove that $\phi_\alpha$ is
not super-additive on $\bp_n^k(\R)$ for $\alpha \in (-1+2\N) \cap [1,n)$.
Note that for all $u,v \in \R^n$, if $\phi_\alpha$ is super-additive,
then
\[ \phi_\alpha[uu^T + vv^T] \geq \phi_\alpha[uu^T] + \phi_\alpha[vv^T] =
\phi_\alpha[u]\phi_\alpha[u]^T + \phi_\alpha[v]\phi_\alpha[v]^T \in
\bp_n(\R). \]

\noindent Thus, if $\phi_\alpha$ is super-additive, then it is also
positive on $\bp_n^2(\R)$. We conclude by Theorem \ref{Tpositive} that
$\phi_\alpha$ is not super-additive for $\alpha \in (-1+2\N) \cap
[1,n-2)$. 

The only two powers left to consider are $\alpha = n-2$ with $n$ odd, and
$\alpha = n-1$ with $n$ even. In other words, $n$ is of the form $n = 2l$
or $n=2l+1$ with $l \geq 1$. Thus, $\alpha = 2l-1 \geq 1$ in both cases.
We claim that $\phi_{2l-1}$ is not super-additive on $\bp_n^1(\R)$. To
show the claim, first observe that if $v \in (-1,1)^n$, then $1+v_iv_j >
0$ for all $i,j$, and so by the binomial theorem, 
\[ \phi_{2l-1}[{\bf 1}_{n \times n} + v v^T] - {\bf 1}_{n \times n} -
\phi_{2l-1}[v v^T] = \sum_{i=1}^{2l-1} \binom{2l-1}{i} g_i[v v^T] -
\phi_{2l-1}[v v^T], \]

\noindent where $g_i(x) = x^i$ for $x \in \R$ and $i = 1, \dots, 2l-1$.
By Corollary \ref{Cindependent} applied to the functions $g_1, g_2,
\dots, g_{2l-1} = \psi_{2l-1}, \phi_{2l-1}$ and $m = 2l \leq n$, there
exist $u \in (-1,1)^n$ and $x \in \R^n$ such that
\[ x^T \left( \phi_{2l-1}[{\bf 1}_{n \times n} + u u^T] - {\bf 1}_{n
\times n} - \phi_{2l-1}[u u^T] \right) x = -1. \]

\noindent This shows that $\phi_{2l-1}$ is not super-additive on
$\bp_n^1(\R)$, hence not on $\bp_n^k(\R)$.\medskip


\noindent {\bf Case 3: $f(x) = \psi_\alpha(x)$.}
The proof is similar to that of Case 2, and is included for completeness.
Clearly, the assertion holds if $\alpha \in -1+2\N$ and $1 \leq k \leq
n$, since in that case $\psi_\alpha \equiv x^\alpha$. Next if $\alpha
\geq n \geq 2$, then as in Case 1, for $A,B \in \bp_n(\R)$, 
\[ \psi_\alpha[A+B] = \psi_\alpha[A] + \alpha \int_0^1 B \circ
\phi_{\alpha-1}[t(A+B)+(1-t)A]\ dt. \]

\noindent Since $\alpha-1 \geq n-1$, by Theorem \ref{Tmonotone} the
function $\phi_{\alpha-1}$ is monotone on $\bp_n(\R)$. Thus,
\[ \psi_\alpha[A+B] \geq \psi_\alpha[A] + \alpha B \circ
\phi_{\alpha-1}[B] \int_0^1 t^{\alpha-1}\ dt =
\psi_\alpha[A]+\psi_\alpha[B]. \]

\noindent It follows that $\psi_\alpha$ is super-additive on
$\bp_n^k(\R)$ for $\alpha \in (-1+2 \N) \cup [n,\infty)$. Next note by
Case 1 that $\psi_\alpha$ is not super-additive on $\bp_n^k(\R)$ for
$\alpha \in (1,n) \setminus \N$. It thus remains to prove that
$\psi_\alpha$ is not super-additive on $\bp_n^k(\R)$ for $\alpha \in 2\N
\cap [1,n)$. As in Case 2, we observe that if $\psi_\alpha$ is
super-additive, then it is also positive on $\bp_n^2(\R)$. Hence by
Theorem \ref{Tpositive}, $\psi_\alpha$ is not super-additive for $\alpha
\in 2\N \cap [1,n-2)$.

The only two powers left to consider are $\alpha = n-2$ with $n$ even,
and $\alpha = n-1$ with $n$ odd. Let $n = 2l+1$ or $2l+2$ with $l \geq
1$, so that $\alpha = 2l \geq 2$ in both cases. We claim that $\psi_{2l}$
is not super-additive on $\bp_n^1(\R)$. To show the claim, first observe
that if $v \in (-1,1)^n$, then $1+v_iv_j > 0$ for all $i,j$, and so by
the binomial theorem, 
\[ \psi_{2l}[{\bf 1}_{n \times n} + v v^T] - {\bf 1}_{n \times n} -
\psi_{2l}[v v^T] = \sum_{i=1}^{2l} \binom{2l}{i} g_i[v v^T] - \psi_{2l}[v
v^T], \]

\noindent where $g_i(x) = x^i$ for $x \in \R$ and $i = 1, \dots, 2l$.
By Corollary \ref{Cindependent} applied to the functions $g_1, g_2,
\dots, g_{2l} = \phi_{2l}, \psi_{2l}$ and $m = 2l+1 \leq n$, there exist
$u \in (-1,1)^n$ and $x \in \R^n$ such that
\[ x^T \left( \psi_{2l}[{\bf 1}_{n \times n} + u u^T] - {\bf 1}_{n \times
n} - \psi_{2l}[u u^T] \right) x = -1. \]

\noindent This shows that $\psi_{2l}$ is not super-additive on
$\bp_n^1(\R)$, hence not on $\bp_n^k(\R)$.
\medskip

\noindent {\bf (2)(a) Sub-additivity for $f_\alpha$.}
First note that if $\alpha \in \R$, applying the definition of
sub-additivity to $A = B = {\bf 1}_{n \times n} \in \bp_n^1([0,\infty))$
shows that $f_\alpha$ is not Loewner sub-additive for $\alpha > 1$.
Clearly $f_1$ is sub-additive on $\bp_n(I)$, so it remains to study
$f_\alpha$ for $\alpha < 1$.
Now suppose $2 \leq k \leq n$ and $\alpha < 1$. By Theorem
\ref{Tpositive}, there exists $A \in \bp_n^2(I)$ such that $f_\alpha[A]
\notin \bp_n$. Setting $B=A$, we obtain: $f_\alpha[A] + f_\alpha[B] -
f_\alpha[A+B] = (2 - 2^\alpha) f[A] \notin \bp_n$. It follows that
$f_\alpha$ is not sub-additive on $\bp_n^k(I)$ for $\alpha < 1$. This
settles the assertion for $2 \leq k \leq n$.

The last case is if $k=1$ and $\alpha < 1$. For ease of exposition, the
analysis in this case is divided into several sub-cases:\medskip

\noindent {\bf Sub-case 1: $\alpha \in (0,1)$.}
Given $0 < \epsilon < 1$ and $v \in (0,1)^n$, apply Taylor's theorem
entrywise to $g_\alpha[\epsilon v v^T]$, where $g_\alpha(x) =
(1+x)^\alpha$ as above. We obtain:
\[ f_\alpha[{\bf 1}_{n \times n} + \epsilon v v^T] - {\bf 1}_{n \times n}
- f_\alpha[\epsilon v v^T] = \epsilon \alpha v v^T  -\epsilon^\alpha
f_\alpha[v v^T] + O(\epsilon^2) C, \]

\noindent where $C = C(v)$ is a $n \times n$ matrix that is independent
of $\epsilon$. By Corollary \ref{Cindependent} with $F(x) = \epsilon
\alpha x - \epsilon^\alpha x^\alpha$ and $m = 2 \leq n$, there exist $u
\in (0,1)^n$ and $x_\alpha \in \R^n$ such that
\[ x_\alpha^T \left( f_\alpha[{\bf 1}_{n \times n} + \epsilon u u^T] -
{\bf 1}_{n \times n} - f_\alpha[\epsilon u u^T] \right) x_\alpha =
\epsilon \alpha + O(\epsilon^2) x_\alpha^T C x_\alpha, \]

\noindent which is positive for $\epsilon > 0$ small enough. Therefore
$f_\alpha$ is not sub-additive on $\bp_n^1([0,\infty))$ for $\alpha \in
(0,1)$.\medskip

\noindent {\bf Sub-case 2: $\alpha = 0$.}
To see why $f_0$ is indeed sub-additive on $\bp_n^1([0,\infty))$, given a
subset $S \subset \{ 1, \dots, n \}$ we define ${\bf 1}_S$ to be the
matrix with $(i,j)$ entry $1$ if $i,j \in S$ and $0$ otherwise. Now given
$A = (a_{ij}) \in \bp_n^1([0,\infty))$, define $S(A) := \{ i : a_{ii}
\neq 0 \}$. Then $f_0[A] = {\bf 1}_{S(A)}, f_0[B] = {\bf 1}_{S(B)}$ for
$A,B \in \bp_n^1([0,\infty))$, and hence by inclusion-exclusion, $f_0[A]
+ f_0[B] - f_0[A+B] = {\bf 1}_{S(A) \cap S(B)} \in \bp_n^1([0,\infty))$.
Thus $f_0$ is sub-additive on $\bp_n^1([0,\infty))$ as claimed.\medskip

\noindent {\bf Sub-case 3: $\alpha < 0, n \geq 3$.}
This part follows by an argument similar to Sub-case 1.
Given $0 < \epsilon < 1$ and $v \in (0,1)^n$, apply Taylor's theorem
entrywise to $g_\alpha[\epsilon v v^T]$, where $g_\alpha(x) :=
(1+x)^\alpha$. We obtain:
\[ f_\alpha[{\bf 1}_{n \times n}] + f_\alpha[\epsilon v v^T] -
f_\alpha[{\bf 1}_{n \times n} + \epsilon v v^T]
= \epsilon^\alpha f_\alpha[v v^T] - \epsilon \alpha v v^T 
- \epsilon^2 \frac{\alpha(\alpha-1)}{2} (v^{\circ 2})(v^{\circ 2})^T
+ O(\epsilon^3) C, \]

\noindent where $C = C(v)$ is a $n \times n$ matrix that is independent
of $\epsilon$. By Corollary \ref{Cindependent} with $F(x) =
\epsilon^\alpha x^\alpha - \epsilon \alpha x - \epsilon^2 x^2$ and $m = 3
\leq n$, there exist $u \in (0,1)^n$ and $x_\alpha \in \R^n$ such that
\[ x_\alpha^T \left( f_\alpha[{\bf 1}_{n \times n}] + f_\alpha[\epsilon u
u^T] -f_\alpha[{\bf 1}_{n \times n} \epsilon u u^T] \right) x_\alpha =
-\epsilon^2 \frac{\alpha(\alpha-1)}{2} + O(\epsilon^3) x_\alpha^T C(u)
x_\alpha, \]

\noindent which is negative for $\epsilon > 0$ small enough. Therefore
$f_\alpha$ is not sub-additive on $\bp_n^1([0,\infty))$ for $\alpha < 0$
and $n \geq 3$.
\medskip

\noindent {\bf Sub-case 4: $\alpha < 0, (n,k) = (2,1)$.}
The bulk of the work in classifying the sub-additive entrywise powers
$f_\alpha$ lies in the remaining case of $\bp_2^1$ with $\alpha < 0$. We
first show that $f_\alpha$ is sub-additive on $\bp_2^1([0,\infty))$ for
all $\alpha < 0$. Setting $A := (a,b)^T (a,b)$ and $B := (c,d)^T (c,d)$,
the problem translates to showing that
\[ (f_\alpha(a^2) + f_\alpha(c^2) - f_\alpha(a^2 + c^2))
\cdot (f_\alpha(b^2) + f_\alpha(d^2) - f_\alpha(b^2 + d^2)) \geq (
f_\alpha(ab) + f_\alpha(cd) - f_\alpha(ab+cd))^2. \]

\noindent Note that if any of $a,b,c,d = 0$ then the inequality is clear.
Thus we may assume $a,b,c,d > 0$. Now define 
\[ f(x,y) := x^\alpha + y^\alpha - (x+y)^\alpha, \qquad g(x,y) := \log
f(e^x,e^y). \]

\noindent Then proving the above inequality is equivalent to showing that
$(g(x_1,y_1) + g(x_2,y_2))/2 \geq g((x_1+x_2)/2, (y_1+y_2)/2)$, i.e.,
that $g$ is midpoint-convex on $\R^2$. Since $g$ is smooth, it suffices
to show that its Hessian $H_g(x,y)$ is positive semidefinite at all
points in $\R^2$. A straightforward but longwinded computation
demonstrates that $\det H_g(x,y) = 0$ for all $x,y \in \R^2$. Thus it
suffices to show that $g_{xx}$ is nonnegative on $\R$ (and the result for
$g_{yy}$ follows by symmetry). We now compute, setting $E := e^x + e^y$
for notational convenience:
\begin{align*}
&\ f(x,y)^2 g_{xx}(x,y)\\
= &\ (e^{\alpha x} + e^{\alpha y} - E^\alpha) (\alpha^2 e^{\alpha x} -
\alpha e^x E^{\alpha-1} - \alpha(\alpha- 1) e^{2x} E^{\alpha-2}) -
\alpha^2 (e^x E^{\alpha-1} - e^{\alpha x})^2\\
= &\ \alpha^2 e^{\alpha(x+y)} E^{\alpha-2}(E^{2-\alpha} - (e^{(2-\alpha)x} +
e^{(2-\alpha)y})) + \alpha e^{x+y} E^{\alpha-2}(E^\alpha - (e^{\alpha x}
+ e^{\alpha y})).
\end{align*}

\noindent Note that the difference in the first term is nonnegative
because $x^{2-\alpha}$ is super-additive, while the difference in the
second term is nonpositive because $x^\alpha$ is sub-additive. Thus both
terms are nonnegative, which concludes the proof for $f_\alpha$.\medskip

\noindent {\bf (b) Sub-additivity for $\phi_\alpha$.}
By considering the matrices $A = u u^T, B = v v^T$ with $u = (1,1)^T$
and $v = (1,-1)^T$, it immediately follows that $\phi_\alpha$
is not Loewner sub-additive on $\bp_2^1(\R)$, and hence not sub-additive
on $\bp_n^k(\R)$ for all $(n,k)$.\medskip

\noindent {\bf (c) Sub-additivity for $\psi_\alpha$.}
First note that $\psi_1(x) = x$ is sub-additive on $\bp_n^k(\R)$ for all
$(n,k)$. It is also not difficult to show that $\psi_0$ is sub-additive
on $\bp_2^1(\R)$. 
Using part (a), it remains to prove that $\psi_0$ is not
sub-additive on $\bp_n^1(\R)$ for $n>2$, and $\psi_\alpha$ is not
sub-additive on $\bp_2^1(\R)$ for $\alpha < 0$.

To see why $\psi_0$ is not sub-additive on $\bp_n^1(\R)$ for $n \geq 3$,
use the following three-parameter family of rank one counterexamples:
\[ (A(a,b,c) := (-a,c,c)^T(-a,c,c), \ B(a,b,c) := (c,-b,c)^T
(c,-b,c)), \qquad 0 < a < b < c. \]

It remains to show that $\psi_\alpha$ is not sub-additive on
$\bp_2^1(\R)$ for any $\alpha < 0$. Let $A := (1,-1)^T (1,-1)$ and $B :=
(1,1/2)^T (1,1/2)$. Then,
\[ \psi_\alpha[A] + \psi_\alpha[B] - \psi_\alpha[A+B] = \begin{pmatrix} 2
- 2^\alpha & -1 + 2^{1-\alpha}\\ -1 + 2^{1-\alpha} & 1 + (1/4)^\alpha -
(5/4)^\alpha \end{pmatrix} =: C_\alpha. \]

\noindent We claim that $\det C_\alpha < 0$ for all $\alpha < 0$, which
shows that $\psi_\alpha$ is not sub-additive on $\bp_2^1(\R)$ and
completes the proof. To see why the claim holds, compute for $\alpha < 0$:
\[ 4^\alpha (2 - 2^\alpha) \det C_\alpha = 4^\alpha + 2^\alpha - 1 -
5^\alpha. \]

\noindent Note that the function $f_\alpha(x) = x^\alpha$ is convex on
$(0,\infty)$ for $\alpha < 0$, so Jensen's inequality yields:
\[ 2^\alpha = f_\alpha \left( \frac{3}{4} \cdot 1 + \frac{1}{4} \cdot 5
\right) < \frac{3}{4} + \frac{1}{4} \cdot 5^a, \qquad
4^\alpha = f_\alpha \left( \frac{1}{4} \cdot 1 + \frac{3}{4} \cdot 5
\right) < \frac{1}{4} + \frac{3}{4} \cdot 5^a. \]

\noindent Adding the two inequalities shows that $\det C_\alpha < 0$ for
$\alpha < 0$, and the proof is complete.
\end{proof}

\section{Concluding remarks}\label{S5}

We conclude by discussing the following questions that naturally arise
from the above analysis:
\begin{enumerate}
\item Is it possible to find matrices of rank exactly $k$ (for some $1
\leq k \leq n$) for which a non-integer power less than $n-2$ is not
Loewner positive? Similar questions can also be asked for monotonicity,
convexity, and super/sub-additivity.

\item Can we compute the Hadamard critical exponents for convex
combinations of the two-sided power functions $\phi_\alpha, \psi_\alpha$?
\end{enumerate}

\noindent We answer all these questions in the remainder of this section.

\subsection{Examples of fixed rank positive semidefinite matrices.}

In the previous sections, several matrices in $\bp_n^k(I)$ were
constructed, for which specific non-integer Hadamard powers were not
Loewner positive (or monotone, convex, or super/sub-additive).  Recall
that $\bp_n^k(I)$ consists of matrices of rank $k$ {\it or less};
however, all of the examples above were in fact of rank 2 or rank 1. A
natural question is thus if there are higher rank examples for which the
Loewner properties are violated. This is important since in applications,
when entrywise powers are applied to regularize covariance/correlation
matrices, the rank corresponds to the sample size and is thus known.
Hence, failure to maintain positive definiteness only for rank $2$
matrices might not have serious consequences in practice.
The following result strengthens Theorems \ref{Tpositive},
\ref{Tmonotone}, \ref{Tconvex}, and \ref{Tsuper} by constructing higher
rank counterexamples from lower rank matrices. 

\begin{proposition}
Let $0 < R \leq \infty$, $I = [0,R)$ or $(-R,R)$, and $f : I \to \R$ be
continuous. Suppose $1 \leq l < k \leq n$ are integers, and $A, B \geq 0$
are matrices in $\bp_n(I)$ such that $\rk A = l$ and one of the following
properties is satisfied: 
\begin{enumerate}[(a)]
\item $f[A] \not\in \bp_n$;
\item $A \geq B \geq 0$ and $f[A] \not\geq f[B]$; 
\item $A \geq B \geq 0$ and $f[\lambda A + (1-\lambda)B] \not\leq \lambda
f[A] + (1-\lambda) f[B]$ for some $0 < \lambda < 1$;
\item $f[A+B] \not\geq f[A] + f[B]$.
\item $f[A+B] \not\leq f[A] + f[B]$.
\end{enumerate}
Then there exist $A', B' \geq 0$ such that $\rk A' = k$ and the same
property holds when $A,B$ are replaced by $A',B'$ respectively.
\end{proposition}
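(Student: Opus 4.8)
The plan is to observe that each of the five failures (a)--(e) asserts that a certain symmetric matrix---namely $f[A]$, $f[A]-f[B]$, the quantity $\lambda f[A]+(1-\lambda)f[B]-f[\lambda A+(1-\lambda)B]$, $f[A+B]-f[A]-f[B]$, or its negative---fails to lie in the closed cone $\bp_n$. Since the complement of $\bp_n$ in the space of symmetric matrices is open, and since $f$ is continuous (so that $X \mapsto f[X]$ is continuous on matrices with entries in $I$), each failure is a \emph{stable} property: it persists under all sufficiently small entrywise perturbations. It therefore suffices to produce an $A' \in \bp_n(I)$ of rank exactly $k$ lying arbitrarily close to $A$ and compatible with the relevant order relation, and then to take $B' := B$.

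First I would construct the rank-$k$ approximant. Since $\rk A = l < k \le n$, the standard basis vectors span $\R^n \supsetneq \img A$, so one can extract indices $i_1 < \cdots < i_{k-l}$ with $e_{i_1},\dots,e_{i_{k-l}}$ linearly independent modulo $\img A$; equivalently $\dim\bigl(\img A + \sspan\{e_{i_1},\dots,e_{i_{k-l}}\}\bigr)=k$. Put $M := \sum_{j=1}^{k-l} e_{i_j} e_{i_j}^T$, a diagonal $0/1$ matrix, and set $A' := A + \epsilon M$ for $\epsilon>0$. Because $A,M \geq 0$, the standard identity $\ker(A+\epsilon M)=\ker A \cap \ker M$ gives $\img A' = \img A + \img M$, so $\rk A' = k$ for \emph{every} $\epsilon>0$. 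Moreover $M$ alters only the diagonal of $A$: for $I=[0,R)$ the perturbed diagonal entries remain nonnegative and, for small $\epsilon$, stay $<R$; for $I=(-R,R)$ they remain in $(-R,R)$ for small $\epsilon$; in both cases every off-diagonal entry is untouched. Hence $A' \in \bp_n(I)$ with $\rk A'=k$.

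Next I would verify the five cases, always with $B':=B$. For (a) there is no order constraint, and $f[A'] \to f[A]\notin \bp_n$ as $\epsilon \to 0^+$, so $f[A']\notin\bp_n$ once $\epsilon$ is small. For (b) and (c) the chain $A'=A+\epsilon M \ge A \ge B = B' \ge 0$ preserves the order hypothesis, while $f[A']-f[B']$ and $\lambda f[A']+(1-\lambda)f[B']-f[\lambda A'+(1-\lambda)B']$ (for the fixed $\lambda$ that works for $A,B$) converge entrywise to their counterparts for $A,B$, which lie outside $\bp_n$; stability yields the claim for small $\epsilon$. For (d) and (e) no order relation is needed: $f[A'+B']-f[A']-f[B']$ (resp.\ its negative) converges to the corresponding non-PSD matrix for $A,B$, and $A'+B' = A+B+\epsilon M$ has entries in $I$ for small $\epsilon$. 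Fixing one $\epsilon>0$ small enough to meet the entry constraint and the relevant stability bound simultaneously settles every case.

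The only genuine obstacle is the entrywise constraint when $I=[0,R)$: a naive rank-raising perturbation $\epsilon w w^T$ with $w \perp \img A$ can push an off-diagonal entry of $A$ that equals $0$ to a negative value, leaving $\bp_n([0,R))$. The device that circumvents this is precisely to raise the rank by a \emph{diagonal} perturbation $\epsilon M$, which increases diagonal entries (harmless for nonnegativity) and fixes every off-diagonal entry, including the zero ones; the identity $\img(A+\epsilon M)=\img A+\img M$ for positive semidefinite $A,M$ then guarantees that the rank jumps to exactly $k$ regardless of the size of $\epsilon$.
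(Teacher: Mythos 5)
Your proof is correct and follows the same overall strategy as the paper's: add a small positive semidefinite perturbation to $A$ that raises its rank to exactly $k$, keep the order hypotheses intact, and invoke continuity of $f$ together with the openness of the complement of $\bp_n$ to see that the failing quadratic form stays negative for small $\epsilon$. The difference is in the perturbation itself. The paper extends the eigenvectors $u_1,\dots,u_l$ of $A$ to an orthonormal set $\{u_1,\dots,u_k\}$, sets $C:=\sum_{i=l+1}^{k}u_iu_i^T$, and replaces $A,B$ by $A+\epsilon C,\ B+\epsilon C$; you instead add a diagonal $0/1$ matrix $\epsilon M$ to $A$ alone, with the indices $i_1,\dots,i_{k-l}$ chosen so that $\dim(\img A+\img M)=k$, and keep $B'=B$ (the order hypotheses survive via $A'\geq A\geq B=B'$). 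Your construction buys something real: the paper simply asserts that $A+\epsilon C\in\bp_n(I)$ for small $\epsilon$, but when $I=[0,R)$ the matrix $C$ may have negative off-diagonal entries at positions where $A$ vanishes, in which case $A+\epsilon C$ leaves $\bp_n([0,R))$ for every $\epsilon>0$; your diagonal perturbation touches no off-diagonal entry, so membership in $\bp_n(I)$ is automatic, and the identity $\ker(A+\epsilon M)=\ker A\cap\ker M$ for positive semidefinite summands still forces the rank up to exactly $k$ independently of $\epsilon$. The remaining details (choosing one $\epsilon$ small enough to satisfy the entrywise bound and the finitely many strict inequalities simultaneously) are handled correctly in all five cases.
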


\noindent Note that the special cases of $l=1,2$ answer Question (1)
above. 

\begin{proof}
We show the result for property $(b)$ monotonicity; the analogous results
for $(a)$ positivity, $(c)$ convexity, $(d)$ super-additivity, and $(e)$
sub-additivity are shown similarly. Suppose $A \geq B \geq 0$ and $\rk A
= l$, but $f[A] \not\geq f[B]$. Then there exists a nonzero vector $v \in
\R^n$ such that $v^T f[A] v < v^T f[B] v$.
Now write $A = \sum_{i=1}^l \lambda_i u_i u_i^T$ where $\lambda_i \neq 0$
and $u_i$ are the nonzero eigenvalues and eigenvectors respectively.
Extend the $u_i$ to an orthonormal set $\{ u_1, \dots, u_k \}$, and
define $C := \sum_{i=l+1}^k u_i u_i^T$. Clearly, $A + \epsilon C \geq B +
\epsilon C \geq 0$ and $A + \epsilon C, B + \epsilon C \in \bp_n(I)$ for
small $\epsilon > 0$. Since
\[ 0 > v^T f[A] v - v^T f[B] v = \lim_{\epsilon \to 0^+} v^T (f[A +
\epsilon C] - f[B + \epsilon C]) v \]

\noindent and $f$ is continuous, there exists small $\epsilon_0 > 0$ such
that $f[A + \epsilon_0 C] \not\geq f[B + \epsilon_0 C]$. Now setting $A'
:= A + \epsilon_0 C, B' := B + \epsilon_0 C$ completes the proof, since
$A' \in \bp_n^k(I)$.
\end{proof}

\subsection{Convex combinations of power functions}

Another related question to Theorem \ref{thm:main} would be to compute
Hadamard critical exponents for the function $(\phi_\alpha +
\psi_\alpha)/2$, which equals $x^\alpha$ on $[0,\infty)$ and $0$ on
$(-\infty,0)$. More generally, consider $f^\lambda_\alpha := \lambda
\phi_\alpha + (1-\lambda) \psi_\alpha$ for $\lambda \in [0,1]$. Theorem
\ref{thm:main} already yields much information about the sets
$\calh_J^{f^\lambda}(n,k)$ for all $n \geq 2$, $1 \leq k \leq n$, and $J
\in \{$positivity, monotonicity, convexity, super/sub-additivity$\}$. In
particular, it follows from Theorem \ref{thm:main} that given $J, n, k$,
the Hadamard critical exponents for $f^\lambda$ are the same as in
Corollary \ref{Ccritexp}. Namely, the critical exponents are: 
\begin{equation}
CE_{pos}^{f^\lambda}(n,k) = n-2, \qquad
CE_{mono}^{f^\lambda}(n,k) = n-1, \qquad
CE_{conv}^{f^\lambda}(n,k) = n, \qquad \forall\ 2 \leq k \leq n,
\end{equation}

\noindent and are $0,0,1$ respectively, if $k=1 < n$.
To see why, first note that for $2 \leq k \leq n$, the sets of Loewner
positive, monotone, or convex maps are closed under taking convex
combinations. Thus the critical exponents for positivity, monotonicity,
and convexity are at most $n-2, n-1, n$ respectively. Next, the function
$f_\alpha^\lambda$ reduces to $f_\alpha$ on $[0,\infty)$ for all $\lambda
\in [0,1]$. Thus using the specific matrices described in the proofs of
Theorems \ref{thm:fitz_horn_fractional} and \ref{thm:HF_monotone} and
\cite[Lemma 5.2]{Hiai2009}, we observe that for all $\lambda \in [0,1]$
and non-integer powers $0<\alpha < n-2, n-1$, or $n$, the function
$f_\alpha^\lambda$ is not Loewner positive, monotone, or convex
respectively. The $k=1$ case is handled similarly. 

We now consider the super/sub-additivity of $f^\lambda_\alpha$. By
Theorem \ref{Tsuper} and similar arguments as above, one verifies that
$CE_{super}^{f^\lambda}(n,k) = n$ for $1 \leq k \leq n$. We next claim
that $f^\lambda_\alpha$ is never sub-additive for $0 < \lambda < 1$,
$\alpha > 0$, and $1 \leq k \leq n$. To see why, note that
$f^\lambda_\alpha$ is not sub-additive on $\bp_n^k(\R)$ for $\alpha \in
(0,\infty) \setminus \{1\}$ since $f_\alpha$ is not sub-additive on
$\bp_n([0,\infty))$. If $\alpha = 1$, using $A = {\bf 1}_{2 \times 2}
\oplus {\bf 0}_{(n-2) \times (n-2)}$ and $B = \begin{pmatrix} 1 & -1\\-1
& 1\end{pmatrix} \oplus {\bf 0}_{(n-2) \times (n-2)}$ shows that
$f^\lambda_1$ is not sub-additive for $0 < \lambda < 1$. 

\bibliographystyle{plain}
\bibliography{biblio}

\begin{thebibliography}{10}

\bibitem{Ando88}
Tsuyoshi Ando.
\newblock Comparison of norms {$\vert\vert\vert f(A)-f(B)\vert\vert\vert $} and
  {$\vert\vert\vert f(\vert A-B\vert )\vert\vert\vert $}.
\newblock {\em Math. Z.}, 197(3):403--409, 1988.

\bibitem{AndoZhan99}
Tsuyoshi Ando and Xingzhi Zhan.
\newblock Norm inequalities related to operator monotone functions.
\newblock {\em Math. Ann.}, 315(4):771--780, 1999.

\bibitem{AudenaertAujla2012}
Koenraad~M.R. Audenaert and Jaspal~Singh Aujla.
\newblock On norm sub-additivity and super-additivity inequalities for concave
  and convex functions.
\newblock {\em Linear Multilinear Algebra}, 60(11-12):1369--1389, 2012.

\bibitem{Bhatia-Elsner}
Rajendra Bhatia and Ludwig Elsner.
\newblock Positivity preserving {H}adamard matrix functions.
\newblock {\em Positivity}, 11(4):583--588, 2007.

\bibitem{bickel_levina}
Peter~J. Bickel and Elizaveta Levina.
\newblock Covariance regularization by thresholding.
\newblock {\em Ann. Statist.}, 36(6):2577--2604, 2008.

\bibitem{Bourin2010}
Jean-Christophe Bourin.
\newblock A matrix subadditivity inequality for symmetric norms.
\newblock {\em Proc. Amer. Math. Soc.}, 138(2):495--504, 2010.

\bibitem{BourinUchiyama2007}
Jean-Christophe Bourin and Mitsuru Uchiyama.
\newblock A matrix subadditivity inequality for {$f(A+B)$} and {$f(A)+f(B)$}.
\newblock {\em Linear Algebra Appl.}, 423(2-3):512--518, 2007.

\bibitem{FitzHorn}
Carl~H. Fitz{G}erald and Roger~A. Horn.
\newblock On fractional {H}adamard powers of positive definite matrices.
\newblock {\em J. Math. Anal. Appl.}, 61:633--642, 1977.

\bibitem{Gantmacher_Vol2}
F.R. Gantmacher.
\newblock {\em The theory of matrices. {V}ol. 2}.
\newblock Translated by K.A. Hirsch. Chelsea Publishing Co., New York, 1959.

\bibitem{Guillot_Khare_Rajaratnam-CEC}
Dominique Guillot, Apoorva Khare, and Bala Rajaratnam.
\newblock The critical exponent conjecture for powers of doubly nonnegative
  matrices.
\newblock {\em Linear Algebra Appl.}, 439(8):2422--2427, 2013.

\bibitem{Guillot_Rajaratnam2012}
Dominique Guillot and Bala Rajaratnam.
\newblock Retaining positive definiteness in thresholded matrices.
\newblock {\em Linear Algebra Appl.}, 436(11):4143--4160, 2012.

\bibitem{Guillot_Rajaratnam2012b}
Dominique Guillot and Bala Rajaratnam.
\newblock Functions preserving positive definiteness for sparse matrices.
\newblock {\em Trans. Amer. Math. Soc.}, in print; (arXiv: 1210.3894), 2014.

\bibitem{hero_rajaratnam}
Alfred Hero and Bala Rajaratnam.
\newblock Large-scale correlation screening.
\newblock {\em J. Amer. Statist. Assoc.}, 106(496):1540--1552, 2011.

\bibitem{Hero_Rajaratnam2012}
Alfred Hero and Bala Rajaratnam.
\newblock Hub discovery in partial correlation graphs.
\newblock {\em IEEE Trans. Inform. Theory}, 58(9):6064--6078, 2012.

\bibitem{Hiai2009}
Fumio Hiai.
\newblock Monotonicity for entrywise functions of matrices.
\newblock {\em Linear Algebra Appl.}, 431(8):1125--1146, 2009.

\bibitem{Horn_and_Johnson_Topics}
Roger~A. Horn and Charles~R. Johnson.
\newblock {\em Topics in matrix analysis}.
\newblock Cambridge University Press, Cambridge, 1991.

\bibitem{johnson_et_al_2011}
Charles~R. Johnson, Brian Lins, and Olivia Walch.
\newblock The critical exponent for continuous conventional powers of doubly
  nonnegative matrices.
\newblock {\em Linear Algebra Appl.}, 435(9):2175--2182, 2011.

\bibitem{Walch_survey}
Charles~R. Johnson and Olivia Walch.
\newblock Critical exponents: old and new.
\newblock {\em Electron. J. Linear Algebra}, 25:72--83, 2012.

\bibitem{Laguerre}
E.~Laguerre.
\newblock {\em Oeuvres de {L}aguerre. {T}ome {I}}.
\newblock Chelsea Publishing Co., Bronx, N.Y., 1972.
\newblock Alg{\`e}bre. Calcul int{\'e}gral, R{\'e}dig{\'e}es par Ch. Hermite,
  H. Poincar{\'e} et E. Rouch{\'e}, R{\'e}impression de l'{\'e}dition de 1898.

\bibitem{Li_Horvath}
Ai~Li and Steve Horvath.
\newblock {Network neighborhood analysis with the multi-node topological
  overlap measure}.
\newblock {\em Bioinformatics}, 23(2):222--231, 2007.

\bibitem{Loewner34}
Karl L{\"o}wner.
\newblock \"{U}ber monotone {M}atrixfunktionen.
\newblock {\em Math. Z.}, 38(1):177--216, 1934.

\bibitem{micchelli_et_al_1979}
Charles~A. Micchelli and R.~A. Willoughby.
\newblock On functions which preserve the class of {S}tieltjes matrices.
\newblock {\em Linear Algebra Appl.}, 23:141--156, 1979.

\bibitem{SinghVasudeva2002}
Mandeep Singh and Harkrishan~L. Vasudeva.
\newblock A note on subadditive matrix functions.
\newblock {\em Indian J. Pure Appl. Math.}, 33(5):775--783, 2002.

\bibitem{Zhang_Horvath}
Bin Zhang and Steve Horvath.
\newblock A general framework for weighted gene co-expression network analysis.
\newblock {\em Stat. Appl. Genet. Mol. Biol.}, 4:Art. 17, 45 pp. (electronic),
  2005.

\end{thebibliography}

\end{document}